\numberwithin{equation}{section}
\newtheorem{theorem}{Theorem} 
\newtheorem{proposition}{Proposition} 
\newtheorem{lemma}{Lemma} 
\theoremstyle{definition}
\newtheorem{remark}{Remark} 
\newtheorem{definition}{Definition} 
\newtheorem{assumption}{Assumption}
\renewcommand{\em}[1]{\normalem \em{#1}}
\renewcommand{\emph}[1]{\normalem \emph{#1}}
\newcommand{\ie}{{\em i.e.}, }
\newcommand{\eg}{{\em e.g.}, }
\newcommand{\cf}{{\em cf.\ }}
\newcommand{\nn}{\mathbb{N}} 
\newcommand{\real}{\mathbb{R}} 
\newcommand{\norm}[1]{\left\Vert {#1} \right\Vert} 
\newcommand{\erl}{\left(-\infty , +\infty\right]} 
\newcommand{\dom}[1]{\mathrm{dom}\,{#1}} 
\newcommand{\idom}[1]{\mathrm{int\,dom}\,{#1}} 
\DeclareMathOperator*{\argmin}{\arg\!\min}
\newcommand{\dist}{\mathrm{dist}} 
\newcommand{\act}[1]{\left\langle {#1} \right\rangle} 
\newcommand{\seq}[2]{\left\{{#1}_{{#2}}\right\}_{{#2} \in \mathbb{N}}}
\newcommand{\Seq}[2]{\left\{{#1}^{{#2}}\right\}_{{#2} \in \mathbb{N}}}
\newcommand{\ba}{{\bf a}}
\newcommand{\bo}{{\bf 0}}
\newcommand{\bu}{{\bf u}}
\newcommand{\bv}{{\bf v}}
\newcommand{\bx}{{\bf x}}
\newcommand{\by}{{\bf y}}
\newcommand{\bz}{{\bf z}}
\newcommand{\bb}{{\bf b}}
\newcommand{\bw}{{\bf w}}
\def\red{\color{red}}
\title{Dynamic FISTA for Convex Composite Bi-Level Optimization}
\author{Roey Merchav\thanks{Faculty of Industrial Engineering and Management, Technion--Israel Institute of Technology, Haifa 3200003, Israel. Email: merhav.roey@campus.technion.ac.il.} \and Shoham Sabach\stepcounter{footnote}\thanks{Faculty of Industrial Engineering and Management, Technion--Israel Institute of Technology, Haifa 3200003, Israel. Email: ssabach@technion.ac.il. This research was partially supported by the Israel Science Foundation (ISF) grant 2480-21.} \and Marc Teboulle\thanks{School of Mathematical Sciences, Tel-Aviv University, Ramat-Aviv 69978, Israel. E-mail: teboulle@post.tau.ac.il. This research was partially supported by the Israel Science Foundation (ISF) grant 2619-20.}}
\date{ }
\begin{document}

\maketitle

\begin{abstract}
	In this paper, we study convex bi-level optimization problems where both the inner and outer levels are given as a composite convex minimization. We propose the Fast Bi-level Proximal Gradient (FBi-PG) algorithm, which can be interpreted as applying FISTA to a dynamic regularized composite objective function. The dynamic nature of the regularization parameters allows to achieve an optimal fast convergence rate of $O(1/k^{2})$ in terms of the inner objective function. This is the fastest known convergence rate under no additional restrictive assumptions. We also show that FBi-PG achieves sub-linear simultaneous rates in terms of both the inner and outer objective functions. Moreover, we show that under an H\"olderian type error bound assumption on the inner objective function, the FBi-PG algorithm achieves improved simultaneous rates and converges to an optimal solution of the bi-level optimization problem. Finally, we present numerical experiments demonstrating the performance of the proposed scheme.
\end{abstract}

{\bf Keywords}. Convex minimization, bi-level optimization, first order methods, dynamic proximal gradient, fast convergence rate, global convergence.
\smallskip

{\bf AMS subject classifications}. 90C25, 65K05.

\section{Introduction}
	Bi-level optimization emerges as a dynamic and evolving research field within the broad optimization area. In this paper, we consider the bi-level convex minimization problem which aims at minimizing a convex function, called an {\em outer objective function} over the optimal solution set of another convex minimization of a function called the {\em inner objective}. We are interested in studying a convex bi-level model where {\em both} objectives in the inner and outer levels are instances of composite convex minimization problems. Our focus is on achieving faster convergence rates and corresponding pointwise global convergence for a first-order algorithm to this important class of bi-level composite convex optimization problems.
\medskip

	Following the terminology of inner and outer problems, we are interested in considering an outer problem given by the classical convex composite model
	\begin{equation} \label{Prob:OP} \tag{OP}
		\min_{\bx \in X^{\ast}} \left\{ \omega\left(\bx\right):= \sigma\left(\bx\right) + \psi\left(\bx\right) \right\},
	\end{equation}
	where $X^{\ast}$ is the, assumed non-empty, set of minimizers of the inner level problem which is also described via another convex composite model
	\begin{equation} \label{Prob:P} \tag{P}
		\min_{\bx \in \real^{n}} \left\{ \varphi\left(\bx\right) := f\left(\bx\right) + g\left(\bx\right) \right\}.
	\end{equation}
 	The functions $f , \sigma : \real^{n} \rightarrow \real$ are smooth and convex, while the functions $g , \psi : \real^{n} \rightarrow \erl$ are non-smooth and convex. We will provide the exact details of the problem's data in the forthcoming section. Moreover, we call $\varphi$ and $\omega$ the inner and outer objective functions, respectively. Throughout the paper, we denote by $X' \subset X^{\ast}$ the set of all optimal solutions of the outer optimization problem \eqref{Prob:OP}.
\medskip

	This type of bi-level minimization problems, is also known sometimes under the name {\em simple bi-level programming} as being a particular case of the more general bi-level programming model which involves, in both the inner/outer problems, parametric representation featuring two variables. This simpler model is however very flexible and as such relevant in modelling various modern applications; see, for instance, \cite{DZ2020} for a thorough overview of bi-level optimization problems as well as their applications, and the literature therein.
\medskip

	The implicit constraints imposed by the inner minimization problem make such problems difficult for developing simple and efficient algorithms. In recent years, there has been a growing interest in deriving theoretical guarantees beyond mere convergence results for convex bi-level optimization problems under various type of assumptions on the problem's data. For some of the most recent advances in this area, we refer the readers to \cite{SS2017} and the very recent paper \cite{MS2023}, which both also provide ample details on the past and current developments. Despite many of these interesting results, a grand challenge in this field remains open: {\it developing a first-order algorithm capable of achieving fast convergence rates for convex bi-level optimization problems} akin to the existing faster rates of first-order schemes for the traditional (single level) convex composite minimization \cite{N1983,AT2006,BT2009}.
\medskip
	
	Motivated by the remarkable success of fast first-order algorithms for traditional convex composite optimization problems just alluded above, our objective is to derive similar accelerated rates for the more challenging setting of bi-level convex composite optimization problems. Toward this goal, we propose employing the Tikhonov regularization technique, which involves approaching bi-level optimization as the minimization of either a single {\em fixed} regularized (penalized) or a {\em dynamic} regularized composite convex objective. The convex composite structure of the regularized objective thus naturally call for applying any fast first-order scheme on it. However, at this point we faced a hurdle. As we shall see below, we cannot apply directly existing convergence rate results due to either the lack of knowledge of the fixed regularization parameter or the dynamic change of the regularized composite objective. Thus, to tackle this inherent difficulty, a fresh analysis/approach is required.
\medskip

\noindent {\textbf{Literature.}}
	Before concluding this section, we briefly discuss several recent works related to our objectives. The literature on algorithms for simple bi-level optimization problems can broadly be categorized into two classes: {\em direct} methods and {\em nested} methods. The key distinction lies in computational complexity. Roughly speaking, direct methods avoid solving an inner optimization problem at each iteration, typically by considering an adequate regularized problem, whereas nested methods use approximation requiring  additional computations involving another optimization problem.
\medskip

	Given our focus on developing the simplest possible algorithms, we do not discuss nested approaches in detail. For instance, recent works such as \cite{JAMH2023, CJYM2024, WSJ2024, ZCXZ2024} propose nested algorithms with convergence guarantees for bi-level problems where the inner constraint set $Z$ is compact and convex. However, compactness is essential to their analysis, and the guarantees generally fail without it. Moreover, these methods require computing projections onto the intersection of $Z$ and a hyperplane, a task that is often computationally infeasible in practice.
\medskip

	Turning to direct methods, we highlight the recent works \cite{CSJW2024, SBY2024}, which address a regularized version of the problem using a fixed regularization parameter determined by a pre-specified iteration count $K$. While this approach enables a trivial analysis (see more details below), it does not guarantee recovery of a solution to the original bi-level problem.\footnote{See our numerical section for a concrete example illustrating this limitation in a real-world application.} More generally, as recognized in the literature, choosing an appropriate regularization parameter is a fundamentally difficult and often impractical task. Although convergence analysis under fixed regularization is typically straightforward (as we discuss below), it offers limited insight into the solution of the original bi-level formulation.
\medskip

	The paper is organized as follows. The subsequent section will comprehensively discuss the Tikhonov technique and its strong connections to bi-level optimization problems. This will lead us to propose the Fast Bi-level Proximal Gradient (FBi-PG) algorithm that can be simply interpreted as applying FISTA \cite{BT2009} to the dynamic regularized objective. The algorithm FBi-PG is equipped with a {\em dynamic} update of the regularization parameter, which is itself characterized by the choice of another fixed parameter allowing us to devise accelerated schemes. This is developed in Section \ref{Sec:Analysis}, where we will prove our main rate of convergence results. These analyses highlight the significance and crucial roles played by the dynamic regularization parameter, which is user-determined in regulating the performance of the FBi-PG scheme, a method which (for the first time) is proven to be capable of achieving the fast convergence rate of $O(1/k^{2})$ in terms of the inner optimization problem, see Theorem \ref{T:FastRateIn}. Additionally, we establish convergence rates concerning the outer objective function, and simultaneous rates concerning both functions. In the classical context of convex composite minimization, it is well-known that the sequence generated by FISTA does not necessarily converge to an optimal solution. This issue was however successfully addressed in \cite{CD2015} through a smart analysis of a variant of FISTA. The natural question is then if such a result can be extended to the more involved convex composite bi-level setting. In Section \ref{Sec:ConvAnal}, we positively address this question by adapting and extending the analysis of \cite{CD2015} to this case, and under an H\"{o}lder type error bound assumption \cite{BNPS2015}, we prove the convergence of the generated sequences by FBi-PG to an optimal solution of the bi-level problem \eqref{Prob:OP}. Finally, in Section \ref{Sec:numeric} we present numerical experiments  demonstrating the significant rate/improvement achieved by FBi-PG in comparison to other current state of the art schemes.
 	
\section{The FBi Proximal Gradient Algorithm} \label{Sec:Algo}
	
\subsection{Motivation and Difficulties} \label{SSec:Motivation}
	Several algorithmic approaches exist for addressing bi-level optimization problems. One of the earliest ideas dates back to the book of Tikhonov and Arsenin \cite{TA77-B}, who introduced what is now known as the Tikhonov regularization technique. In their original work, the goal was to find a minimal norm solution (meaning the outer objective function is $\omega \equiv \sigma = \norm{\cdot}^{2}$) to a linear optimization problem. Its primary advantage lies in simplifying the bi-level problem into a single regularized formulation, which avoids handling two optimization problems. In the context of our paper, it means to tackle the following penalized problem
	\begin{equation} \label{Prob:Tik} \tag{Pen$_\alpha$}
		\min_{\bx \in \real^{n}} \left\{ F_{\alpha}\left(\bx\right) \equiv \varphi\left(\bx\right) + \alpha\omega\left(\bx\right) \right\},
	\end{equation}
	where $\alpha > 0$ is the regularization parameter. In this context, we mention the work \cite{FT2008}, which studies among other questions, the connections between bi-level problems and their regularized counterpart in a general convex setting. They proved that there is a small enough $\alpha > 0$ such that an optimal solution of the regularized problem \eqref{Prob:Tik} is an optimal solution of the bi-level problem \eqref{Prob:OP}. Thus, a seemingly natural approach to obtain rates of convergence for the convex composite bi-level problem, based on the later idea of a-priori fixing the regularization parameter $\alpha$, would be to simply apply any first-order scheme on the regularized objective function $F_{\alpha}$, for some predefined parameter $\alpha$, to obtain rates on the {\em regularized} objective function $F_{\alpha}$. This approach has recently been formalized in \cite{CSJW2024, SBY2024}, though the resulting analysis is straightforward and follows directly from standard results on first-order methods. Indeed, for instance, an application of FISTA \cite[Theorem 4.4]{BT2009} on minimizing $F_{\alpha}$ with say an $L_{\alpha}$-Lipschitz gradient part of the objective will generate a sequence $\Seq{\bx}{k}$ which satisfies
	\begin{equation} \label{Fista}
		F_{\alpha}\left(\bx^{k}\right) - F_{\alpha}\left(\bx'\right) \leq \frac{2L_{\alpha}\norm{\bx^{0} - \bx'}^{2}}{\left(k + 1\right)^{2}}, \qquad \forall \, \bx' \in X'.
	\end{equation}
 	Using the definition of $F_{\alpha}$, and since $\varphi\left(\bx^{k}\right) - \varphi\left(\bx'\right) \geq 0$ for all $k \in \nn$, one immediately deduce the following fast rate of convergence for the outer problem
 	\begin{equation} \label{Fista-Omg}
 		\omega\left(\bx^{k}\right) - \omega\left(\bx'\right) \leq \frac{2L_{\alpha}\norm{\bx^{0} - \bx'}^{2}}{\alpha\left(k + 1\right)^{2}}, \qquad \forall \, \bx' \in X'.
 	\end{equation}
 	At this juncture, it is important to note that a generic result can be easily proved for the case of fixed regularizing parameter by showing that independently of the structure of the bi-level optimization problem and for any single-level optimization algorithm the arguments above remain valid. For sake of completeness, the proof of this fact appears in Appendix A. 
 Unfortunately, finding/knowing the value of a fixed regularization parameter $\alpha$, for which a solution of the regularized problem recovers an optimal solution of the bi-level problem, remains itself a challenging task. Furthermore, finding a corresponding rate for the inner optimization problem also depends on the unknown value of $\alpha$.  Therefore, obtaining rates of convergence for bi-level optimization problems {\em cannot} rely on existing results for single level optimization problems, and requires a fresh specially designed approach.
\medskip
 	
 	The first work to provide an efficient use of the Tikhonov regularization technique tailored made for bi-level optimization problems, is \cite{S07}. It was confined to the convex bi-level problem with a smooth and convex outer objective $\omega$ and with the inner objective function $\varphi$ being the sum of a smooth and convex function and an indicator of a closed and convex set. The resulting projected gradient algorithm applied on $F_{\alpha}$ proposed in \cite{S07} suggests that at each iteration $k$ we regularize the outer objective function $\omega$ with a {\em dynamic} parameter $\alpha_{k}$, and it is proved that if $\alpha_{k} \rightarrow 0$ as $k \rightarrow \infty$ as long as $\sum_{k = 1}^{\infty} \alpha_{k} = \infty$, then any limit point of the generated sequence is an optimal solution of the bi-level optimization problem. Therefore, following \cite{S07}, using a dynamic regularization parameter at each iteration seems to be the right approach to overcome the lack of knowledge of the ``right" regularization parameter $\alpha$. Yet, one of the interesting open questions about this approach and which we address in this work pertains to establish convergence rates, and in particular fast convergent rates, as well as the corresponding pointwise global converge results for the more general convex composite bi-level problem \eqref{Prob:OP}.
\medskip

	A first attempt in this direction was taken in the thesis \cite{T2021-T}. In that thesis, the fast scheme  based on the accelerated scheme of \cite{AT2006} (see also \cite{T2018} and references therein), which is different from FISTA was used. However, that study did not cover the general convex composite bi-level problem \eqref{Prob:OP}, the derived fast rate of convergence result was limited only to the inner problem, and no simultaneous rate was established. Moreover, pointwise global convergence was not derived. Inspired by \cite{T2021-T}, and using FISTA as our basis, here we are able to go much beyond \cite{T2021-T} by significantly extending both the rate and the global convergence results under various scenarios; see below for a description of our main results.	

\subsection{Problem's Statement and the FBi-PG Algorithm} \label{SSec:Algo}
	Motivated by the above discussion, in this paper, we extend the framework of FISTA proposed in \cite{BT2009} to introduce the Fast Bi-level Proximal Gradient (FBi-PG) algorithm designed specifically for solving convex bi-level optimization problems. Our approach incorporates the Tikhonov regularization technique, utilizing a {\em dynamically} updated regularization parameter to enhance the algorithm's performance in achieving a fast scheme. More precisely, given $\gamma > 0$, we will use the following sequence of regularizing parameters $\seq{\alpha}{k}$, which is defined for an $a \in \nn$ such that $a \geq 2$, by
	\begin{equation} \label{Alphak}
		\alpha_{k} = \left(k + a\right)^{-\gamma}, \quad k \in \nn.
	\end{equation}		
	For the simplicity of the developments below we define $\alpha_{-1} := 0$. Note that we could have set $a \equiv 1$. The additional parameter $a$ is already introduced here in order to simplify/unify our presentation, which also allows us to handle the pointwise global convergence, see Section \ref{Sec:ConvAnal}.
\medskip
	
	Before presenting the algorithm, considering the inner and outer optimization problems \eqref{Prob:P} and \eqref{Prob:OP}, throughout the paper we will use the following standard assumption on the problem's data.
	\begin{assumption} \label{A:AssumptionA}
		\begin{itemize}
    		\item[(i)] The functions $f , \sigma : \real^{n} \rightarrow \real$ are convex, continuously differentiable having Lipschitz continuous gradients with the Lipschitz parameters $\beta_{f}$ and $\beta_{\sigma}$, respectively. We denote $\beta \equiv \beta_{f} + \beta_{\sigma} \in \real_{++}$.
    		\item[(ii)] The functions $g , \psi : \real^{n} \rightarrow (-\infty , \infty]$ are proper, lower semicontinuous and convex.
    		\item[(iii)] The set $X^{\ast}$ of all optimal solutions of problem \eqref{Prob:P} is non-empty.
    		\item[(iv)] $\inf_{\bx \in \real^{n}} \omega\left(\bx\right) \equiv \omega^{\ast} > -\infty$.
		\end{itemize}
	\end{assumption}	
	Moreover, throughout the paper we systematically define, for all $k \in \nn$, the smooth regularized function $f_{k} \equiv f + \alpha_{k}\sigma$, the non-smooth regularized function $g_{k} \equiv g + \alpha_{k}\psi$, and the regularized function $F_{k} \equiv \varphi + \alpha_{k}\omega$.
	\begin{remark} \label{R:Lipfk}
		The functions $f$ and $\sigma$ having Lipschitz continuous gradients with the Lipschitz parameters $\beta_{f}$ and $\beta_{\sigma}$, respectively. Hence, for any $k \in \nn$, the function $f_{k}$ also has a Lipschitz continuous gradient with the parameter $\beta_{f} + \alpha_{k}\beta_{\sigma}$. Since $\alpha_{k} \leq 1$ then $\beta_{f} + \alpha_{k}\beta_{\sigma} \leq \beta_{f} + \beta_{\sigma} = \beta$.
	\end{remark}
	Based on this remark we see that the regularized objective function $F_{k}$ also has a composite structure of the sum of a smooth function $f_{k}$ with a Lipschitz continuous gradient and a non-smooth function $g_{k}$. We can thus apply FISTA \cite{BT2009} on the dynamic objective $F_{k}$, $k \in \nn$, to produce the FBi-PG algorithm, which is described below.

	{\center\fbox{\parbox{16cm}{{\bf Fast Bi-level Proximal Gradient}
			\begin{itemize}
				\item[ ] {\bf Input:} $\gamma > 0$ and $2 \leq a \in \nn$.
				\item[ ] {\bf Initialization}: $\alpha_{-1} = t_{-1} = 0$, $t_{0} = 1$ and $\bx^{0} = \bx^{-1} \in \real^{n}$.
				\item[ ] {\bf General Step} ($k = 0, 1 , 2 , \ldots$):
					\begin{align}
						\alpha_{k} & = \left(k + a\right)^{-\gamma} \quad f_{k} \equiv f + \alpha_{k}\sigma \quad g_{k} \equiv g + \alpha_{k}\psi \label{Bi-AGUpdate:0} \\
						\by^{k} & = \bx^{k} + t^{-1}_{k}\left(t_{k - 1} - 1\right)\left(\bx^{k} - \bx^{k - 1}\right) \label{Bi-AGUpdate:1} \\
						\bx^{k + 1} & = \argmin_{\bx \in \real^{n}} \left\{ g_{k}\left(\bx\right) + \bx^{T}\nabla f_{k}\left(\by^{k}\right) + \frac{\beta}{2}\norm{\bx - \by^{k}}^{2} \right\} \label{Bi-AGUpdate:2} \\
						t_{k + 1} & = \left(1 + \sqrt{1 + 4t_{k}^{2}}\right)/2 \label{Bi-AGUpdate:3}
				\end{align}
			\end{itemize}\vspace{-0.1in}}}}
\vspace{0.15in}

	The FBi-PG algorithm is associated with a parameter $\gamma > 0$, via the regularization parameter $\alpha_{k} = \left(k + a\right)^{-\gamma}$, which allows us to capture a broad range of behaviors within a unified framework. Throughout the paper, we establish three types of theoretical guarantees for the FBi-PG algorithm:
	\begin{itemize}
    	\item[(i)] individual convergence rates for the inner or outer problems;
    	\item[(ii)] simultaneous convergence rates for both levels;
    	\item[(iii)] pointwise convergence of the iterates to an optimal solution.
	\end{itemize}	
	Although simultaneous convergence is the ultimate goal in bi-level optimization, individual convergence guarantees remain valuable. They offer insight into the algorithm's behavior across regimes, reflect the historical progression of the field, which focused initially on inner rates, and often serve as foundational steps toward establishing full simultaneous results.
\medskip
	
	All our rate results are provided in Section \ref{Sec:Analysis}. We first present our results under the basic Assumption~\ref{A:AssumptionA}, and then show how they can be strengthened under a H\"olderian error bound for the inner objective function. To help the reader navigate the results, we provide the following summary table outlining the convergence guarantees, underlying assumptions, and corresponding theorems.

	\begin{table}[h]
		\centering
		\begin{tabular}{cccccc}
		\hline
		Parameter & Inner & Outer & Simul. & Relevant & Note  \\
		Regime & Convergence & Convergence & & Thms/Props & \\
\hline
      	$\gamma \in (0,1)$ & $O(1/k^\gamma)$  & $O(1/k^{2 - \gamma})$ & No  & Thm 2, Prop 5 & Outer best iterate \\
       	$\gamma = 1$       & $O(\log k / k)$  & $O(\log k / k)$       & Yes & Thm 3         & Both best iterate \\
        $\gamma \in (1,2)$ & $O(1/k^\gamma)$  & $O(1/k^{2 - \gamma})$ & No  & Thm 2, Prop 5 & Outer best iterate \\
    	$\gamma \in (1,2)$ & $ O(1/k^\gamma)$ & $O(1/k^{2 - \gamma})$ & Yes & Thm 4 and 5   & H\"olderian EB \\
        $\gamma \geq 2$    & $O(1/k^{2})$     & N/A                   & No  & Thm 1         & \\
\hline
		\end{tabular}
		\caption{Summary of convergence rate results under various parameter regimes.}
	\end{table}
\medskip

	Before concluding this section, in the following remark we briefly discuss the algorithm’s main computational step.
	\begin{remark}[Computation of the prox of the sum of two functions -- Lifted reformulation] \label{R:Lift}
		The FBi-PG algorithm needs to compute, at each iteration $k \in \nn$, the proximal mapping of the sum  $g + \alpha_{k}\psi$, where the two functions $g, \psi$ are assumed prox friendly. Most often this is a difficult and computationally demanding task, even when both functions are prox friendly. The nice observation made in \cite[Remark 2.1, page 50]{LTVP2025} provides a way that circumvents this difficulty. The lifting technique proposed in \cite{LTVP2025}, which allows us to eliminate this computational bottleneck by reformulating the bi-level optimization problem as follows
		\begin{equation*}
			\min_{\bw \in W^{\ast}} \left\{ {\bar \omega}\left(\bw\right) \equiv \sigma\left(\bx\right) + \psi\left(\bz\right) \, : \, \bw \in \argmin_{\bw \in \real^{n \times n}} \left\{ {\bar \varphi}\left(\bw\right) \equiv f\left(\bx\right) + g\left(\bx\right) + \norm{\bx - \bz}^{2}/2 \right\} \right\}.
		\end{equation*}
 		The equivalence immediately follows, since in the inner problem any minimizer $\bw' = \left(\bx' , \bz'\right)$ must satisfy $\bz’ = \bx’$, with $\bx'$ minimizer of $\varphi$. Thanks to this lifted reformulation, applying the Fbi-PG reduces now to compute the prox of the {\em separable} sum $g\left(\bx\right) + \alpha_{k}\psi\left(\bz\right)$, \ie it reduces to compute separately each of the prox friendly functions $g$ and $\psi$.
	\end{remark}


	
\section{Rate of Convergence Analysis of FBi-PG} \label{Sec:Analysis}
	The section is divided into three parts, where we start with a few key results, which will serve us in deriving the main results. In Section \ref{SSec:FastRate}, we will first prove (last iterate) individual inner rates for three different regimes: $0 < \gamma < 2$, $\gamma = 2$, and $\gamma > 2$. In the last regime, we obtain the fast inner rate, which is the best current known acceleration result. Then, we will provide (best iterate) outer rate of convergence result for $0 < \gamma < 2$. We conclude this part with our main result, which is a simultaneous (best iterate) rate of both the inner and outer problems for $\gamma = 1$. In Section \ref{SSec:ImprovedRate}, we will prove an improved (last iterate) simultaneous rates for $1 < \gamma < 2$ under the H\"olderian error bound condition for the inner objective function.


\subsection{Main Pillars of the Analysis} \label{SSec:Pillars}
	In order to obtain our basic inequality for the development of all the announced results, we will need the following classical proximal inequality (see \cite[Lemma 2.6]{BT10}).
	\begin{proposition} \label{P:TechProx}
		Let $s : \real^{n} \rightarrow \real$ be a convex and continuously differentiable function having a Lipschitz continuous gradient with the Lipschitz parameter $L > 0$, and let $q : \real^{n} \rightarrow (-\infty , \infty]$ be a proper, lower semicontinuous and convex function. For a given vector $\by \in \real^{n}$, we define
		\begin{equation*}
			\bx^{+} := \argmin_{\bx \in \real^{n}} \left\{ q\left(\bx\right) + \bx^{T}\nabla s\left(\by\right) + \frac{L}{2}\norm{\bx - \by}^{2} \right\}.
		\end{equation*}
		Then, for all $\bu \in \dom{q}$, we have
		\begin{equation*}
			q\left(\bx^{+}\right) + s\left(\bx^{+}\right) - \left(q\left(\bu\right) + s\left(\bu\right)\right) \leq \frac{L}{2}\left(\norm{\bu - \by}^{2} - \norm{\bu - \bx^{+}}^{2}\right).
		\end{equation*}		
	\end{proposition}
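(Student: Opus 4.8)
The plan is to combine three standard ingredients: the descent lemma for the smooth part $s$, the gradient inequality encoding convexity, and the first-order optimality condition characterizing the minimizer $\bx^{+}$. First I would record that $\bx^{+}$ is well-defined: the objective defining it is the sum of the proper, lower semicontinuous, convex function $q$ and a strongly convex smooth quadratic, hence it is $L$-strongly convex and coercive, so it admits a unique minimizer, which in particular lies in $\dom{q}$. Fermat's rule together with the subdifferential sum rule (applicable since the quadratic-plus-linear part is everywhere differentiable) yields the inclusion $\bo \in \partial q\left(\bx^{+}\right) + \nabla s\left(\by\right) + L\left(\bx^{+} - \by\right)$; equivalently, $\xi := -\nabla s\left(\by\right) - L\left(\bx^{+} - \by\right)$ is a subgradient of $q$ at $\bx^{+}$.

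Next I would assemble the bound on $q\left(\bx^{+}\right) + s\left(\bx^{+}\right)$ from three estimates. From the subgradient inequality for $q$ at $\bx^{+}$ with the above $\xi$, I obtain $q\left(\bx^{+}\right) \leq q\left(\bu\right) + \act{\nabla s\left(\by\right) + L\left(\bx^{+} - \by\right) , \bu - \bx^{+}}$. From the descent lemma (valid because $\nabla s$ is $L$-Lipschitz), I obtain $s\left(\bx^{+}\right) \leq s\left(\by\right) + \act{\nabla s\left(\by\right) , \bx^{+} - \by} + \frac{L}{2}\norm{\bx^{+} - \by}^{2}$. Finally, convexity of $s$ gives $s\left(\by\right) \leq s\left(\bu\right) - \act{\nabla s\left(\by\right) , \bu - \by}$, which I use to replace $s\left(\by\right)$ by $s\left(\bu\right)$. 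Adding the three estimates and cancelling $q\left(\bu\right)$ and $s\left(\bu\right)$ leaves a right-hand side consisting solely of inner products against $\nabla s\left(\by\right)$, the quadratic term $\frac{L}{2}\norm{\bx^{+} - \by}^{2}$, and the cross term $L\act{\bx^{+} - \by , \bu - \bx^{+}}$.

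The bookkeeping that makes everything collapse is that all the $\nabla s\left(\by\right)$-terms cancel exactly: the three linear contributions sum to $\act{\nabla s\left(\by\right) , \left(\bu - \bx^{+}\right) - \left(\bu - \by\right) + \left(\bx^{+} - \by\right)} = 0$. After this cancellation only the $L$-weighted geometric terms survive, and I would finish with the three-point identity $2\act{\bx^{+} - \by , \bu - \bx^{+}} = \norm{\bu - \by}^{2} - \norm{\bu - \bx^{+}}^{2} - \norm{\bx^{+} - \by}^{2}$. This absorbs the leftover $\frac{L}{2}\norm{\bx^{+} - \by}^{2}$ and yields precisely $\frac{L}{2}\left(\norm{\bu - \by}^{2} - \norm{\bu - \bx^{+}}^{2}\right)$, as claimed.

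There is no genuine obstacle in this argument; it is a routine assembly of the descent lemma, convexity, and the optimality condition. The only points demanding minor care are the exact cancellation of the gradient terms and the tracking of signs, which is why I would deliberately organize the computation so as to isolate the $\nabla s\left(\by\right)$-linear part before invoking the quadratic identity, rather than expanding everything at once.
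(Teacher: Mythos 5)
Your argument is correct: the optimality condition, the subgradient inequality for $q$, the descent lemma, and the convexity of $s$ combine exactly as you describe, the $\nabla s\left(\by\right)$-terms cancel, and the three-point identity delivers the stated bound. The paper itself offers no proof of this proposition --- it is quoted as a classical result from the cited reference --- and your derivation is precisely the standard argument given there, so there is nothing to flag.
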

  	At each iteration $k \in \nn$, the steps of FBi-PG can be seen as a direct application of the well-known FISTA algorithm \cite{BT2009} on the regularized function $F_{k}$. The next technical result can be obtained from \eg \cite[Lemma 2.7]{BT10}. However, for the sake of completeness, we include its simple proof.
	\begin{proposition} \label{P:BasicIne}
		Let $\Seq{\bx}{k}$ be a sequence generated by FBi-PG. For all $k \in \nn$ and $\bx \in \real^{n}$, we have
		\begin{equation} \label{P:BasicIne:0}
			t_{k}^{2}\left(F_{k}\left(\bx^{k + 1}\right) - F_{k}\left(\bx\right)\right) + \frac{\beta}{2}\norm{\bz^{k + 1} - \bx}^{2} \leq \left(t_{k}^{2} - t_{k}\right)\left(F_{k}\left(\bx^{k}\right) - F_{k}\left(\bx\right)\right) + \frac{\beta}{2}\norm{\bz^{k} - \bx}^{2},
		\end{equation}
		where we define $\bz^{k} := \left(1 - t_{k - 1}\right)\bx^{k - 1} + t_{k - 1}\bx^{k}$.
	\end{proposition}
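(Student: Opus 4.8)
The plan is to read the general step of FBi-PG as a single FISTA step on the \emph{frozen} composite objective $F_{k} = f_{k} + g_{k}$ and then replay the classical one-step FISTA estimate, with $F_{k}$ playing the role of the fixed objective. Note that the recursion \eqref{Bi-AGUpdate:3} plays no role in this inequality; the only facts about $\seq{t}{k}$ used are $t_{k} \geq 1$ (so that $t_{k} - 1 \geq 0$) and $t_{k} > 0$. We may also assume $\bx \in \dom{g_{k}}$, since otherwise $F_{k}(\bx) = +\infty$ and the asserted inequality holds trivially.

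By Remark \ref{R:Lipfk}, $f_{k}$ is convex and smooth with a $\beta$-Lipschitz gradient, while $g_{k}$ is proper, lower semicontinuous and convex, and the update \eqref{Bi-AGUpdate:2} is precisely the minimization defining $\bx^{+}$ in Proposition \ref{P:TechProx} with $s = f_{k}$, $q = g_{k}$, $L = \beta$ and $\by = \by^{k}$. Applying that proposition and using $F_{k} = f_{k} + g_{k}$ gives the fundamental one-step inequality
\begin{equation} \label{Eq:PlanStar}
	F_{k}\left(\bx^{k + 1}\right) - F_{k}\left(\bu\right) \leq \frac{\beta}{2}\left(\norm{\bu - \by^{k}}^{2} - \norm{\bu - \bx^{k + 1}}^{2}\right), \qquad \forall \, \bu \in \dom{g_{k}}.
\end{equation}
I would then instantiate \eqref{Eq:PlanStar} at $\bu = \bx^{k}$ and at $\bu = \bx$, multiply the first by $t_{k} - 1 \geq 0$, add the two, and multiply the result by $t_{k}$. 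Writing $\Phi_{k} := F_{k}(\bx^{k}) - F_{k}(\bx)$ and $\Phi_{k + 1} := F_{k}(\bx^{k + 1}) - F_{k}(\bx)$, the left-hand side collapses (via $t_{k}(t_{k} - 1) = t_{k}^{2} - t_{k}$) to $t_{k}^{2}\Phi_{k + 1} - (t_{k}^{2} - t_{k})\Phi_{k}$, which already matches the function-value part of \eqref{P:BasicIne:0}.

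The main obstacle is the algebra that turns the accumulated right-hand side into the telescoping distance terms, i.e. establishing
\[
	\frac{\beta t_{k}}{2}\left[(t_{k} - 1)\left(\norm{\bx^{k} - \by^{k}}^{2} - \norm{\bx^{k} - \bx^{k + 1}}^{2}\right) + \norm{\bx - \by^{k}}^{2} - \norm{\bx - \bx^{k + 1}}^{2}\right] = \frac{\beta}{2}\left(\norm{\bz^{k} - \bx}^{2} - \norm{\bz^{k + 1} - \bx}^{2}\right).
\]
The clean route is to complete the square: setting $\bp := \left((t_{k} - 1)\bx^{k} + \bx\right)/t_{k}$, for every vector $\bc$ one has the weighted-mean identity
\[
	(t_{k} - 1)\norm{\bx^{k} - \bc}^{2} + \norm{\bx - \bc}^{2} = t_{k}\norm{\bp - \bc}^{2} + \frac{t_{k} - 1}{t_{k}}\norm{\bx^{k} - \bx}^{2}.
\]
Using it once with $\bc = \by^{k}$ and once with $\bc = \bx^{k + 1}$ and subtracting cancels the $\norm{\bx^{k} - \bx}^{2}$ terms, leaving the bracket equal to $t_{k}\left(\norm{\bp - \by^{k}}^{2} - \norm{\bp - \bx^{k + 1}}^{2}\right)$. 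The proof then closes through the two vector identities forced by the definitions of $\by^{k}$ and $\bz^{k}$, namely $t_{k}\by^{k} = (t_{k} - 1)\bx^{k} + \bz^{k}$ and $\bz^{k + 1} = t_{k}\bx^{k + 1} - (t_{k} - 1)\bx^{k}$, which yield $t_{k}(\bp - \by^{k}) = \bx - \bz^{k}$ and $t_{k}(\bp - \bx^{k + 1}) = \bx - \bz^{k + 1}$; multiplying the bracket by the leftover factor $t_{k}$ then produces exactly $\norm{\bz^{k} - \bx}^{2} - \norm{\bz^{k + 1} - \bx}^{2}$, and rearranging gives \eqref{P:BasicIne:0}. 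Finally, the conventions $\alpha_{-1} = t_{-1} = 0$ and $\bx^{-1} = \bx^{0}$ make the objects $\bz^{0} = \by^{0} = \bx^{0}$ well defined, so the degenerate index $k = 0$ (where the factor $t_{k} - 1$ vanishes) remains consistent with the same computation.
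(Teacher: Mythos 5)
Your proof is correct, and it reaches \eqref{P:BasicIne:0} by a route that is recognizably a variant of the paper's rather than a copy of it. The paper applies Proposition \ref{P:TechProx} \emph{once}, at the single test point $\bu = \left(1 - t_{k}^{-1}\right)\bx^{k} + t_{k}^{-1}\bx$; the distance terms then become $t_{k}^{-2}\left(\norm{\bz^{k} - \bx}^{2} - \norm{\bz^{k + 1} - \bx}^{2}\right)$ directly from the same two vector identities you use ($t_{k}\by^{k} = \left(t_{k} - 1\right)\bx^{k} + \bz^{k}$ and $\bz^{k + 1} = t_{k}\bx^{k + 1} - \left(t_{k} - 1\right)\bx^{k}$), and a separate invocation of the convexity of $F_{k}$ splits $F_{k}\left(\left(1 - t_{k}^{-1}\right)\bx^{k} + t_{k}^{-1}\bx\right)$ into the two function values. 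You instead apply the proximal inequality \emph{twice}, at $\bu = \bx^{k}$ and $\bu = \bx$, and take the $(t_{k} - 1, 1)$-weighted combination of the resulting inequalities; the price is that the distance terms no longer telescope for free, which is exactly what your weighted-mean (variance) identity repairs, and I checked that this identity and the subsequent cancellation of the $\norm{\bx^{k} - \bx}^{2}$ terms are correct. The two derivations are dual in the usual FISTA sense: the paper pushes the convex combination into the argument of $F_{k}$ and pays with one application of Jensen's inequality, while you push it into the inequalities themselves and pay with one completing-the-square computation. Yours is marginally longer but does not need to invoke convexity of $F_{k}$ as a separate step; the paper's is shorter because the point $\bu$ is chosen so that the quadratic terms are already the ones wanted. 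One small remark: at $\bu = \bx^{k}$ your application of Proposition \ref{P:TechProx} tacitly needs $\bx^{k} \in \dom{g_{k}}$, which holds for $k \geq 1$ because $\bx^{k}$ is the output of the prox step with $g_{k - 1}$ and $\dom{g_{k}} = \dom{g} \cap \dom{\psi}$ is independent of $k$, and is moot for $k = 0$ since the corresponding inequality is multiplied by $t_{0} - 1 = 0$, as you note (the paper's proof glosses over the same degenerate $0 \cdot \infty$ convention).
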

	\begin{proof}
		Let $k \in \nn$. From the $\bx^{k + 1}$ step (see \eqref{Bi-AGUpdate:2}), using Proposition \ref{P:TechProx} with $s\left(\cdot\right) = f_{k}\left(\cdot\right)$, $q\left(\cdot\right) = g_{k}\left(\cdot\right)$, $\by = \by^{k}$, $\bx^{+} = \bx^{k + 1}$, $\bu = \left(1 - t_{k}^{-1}\right)\bx^{k} + t_{k}^{-1}\bx$ and $L = \beta$ we get for any $\bx \in \real^{n}$ that (recall $F_{k} \equiv f_{k} + g_{k}$)
        \begin{align*}
        	F_{k}\left(\bx^{k + 1}\right) & \leq F_{k}\left(\left(1 - t_{k}^{-1}\right)\bx^{k} + t_{k}^{-1}\bx\right) +\frac{\beta}{2}\left(\norm{\left(1 - t_{k}^{-1}\right)\bx^{k} + t_{k}^{-1}\bx - \by^{k}}^{2} - \norm{\left(1 - t_{k}^{-1}\right)\bx^{k} + t_{k}^{-1}\bx - \bx^{k + 1}}^{2}\right) \\
         	& = F_{k}\left(\left(1 - t_{k}^{-1}\right)\bx^{k} + t_{k}^{-1}\bx\right) + \frac{\beta}{2t_{k}^{2}}\left(\norm{\bz^{k} - \bx}^{2} - \norm{\bz^{k + 1} - \bx}^{2}\right),
        \end{align*}
        where the last equality follows from the definitions of $\bz^{k}$ and $\by^{k}$ (see \eqref{Bi-AGUpdate:1}). Since $F_{k}$ is convex, the inequality above yields
        \begin{equation} \label{P:BasicIne:1}
            F_{k}\left(\bx^{k+1}\right) \leq \left(1 - t_{k}^{-1}\right)F_{k}\left(\bx^{k}\right) + t_{k}^{-1}F_{k}\left(\bx\right) + \frac{\beta}{2t_{k}^{2}}\left(\norm{\bz^{k} - \bx}^{2} - \norm{\bz^{k + 1} - \bx}^{2}\right).
        \end{equation}
        Subtracting $F_{k}\left(\bx\right)$ from both sides and multiplying both sides with $t_{k}^{2}$, we obtain after rearranging that
         \begin{equation*}
        	t_{k}^{2}\left(F_{k}\left(\bx^{k + 1}\right) - F_{k}\left(\bx\right)\right) + \frac{\beta}{2}\norm{\bz^{k + 1} - \bx}^{2} \leq \left(1 - t_{k}^{-1}\right)t_{k}^{2}\left(F_{k}\left(\bx^{k}\right) - F_{k}\left(\bx\right)\right) + \frac{\beta}{2}\norm{\bz^{k} - \bx}^{2},
        \end{equation*}
        which proves the desired result.
	\end{proof}
	From now on, we cannot any more apply the rate of convergence results of FISTA (\cite[Theorem 4.4]{BT2009} or \cite[Theorem 2.4]{BT10}) to obtain relevant faster rates for the algorithm FBi-PG, and we need a new line of analysis. Indeed, the {\em dynamic} nature of the regularized function $F_{k}$, $k \in \nn$, precludes the possibility of using directly the rates established for FISTA on $F_{k}$, unless one either pick a fixed parameter $\alpha_{k} \equiv \alpha > 0$, for all $k \in \nn$, or one fixes in advance the iteration counter at some predefined value $k \equiv K$. However, both strategies are not practically and theoretically viable. Indeed, as discussed earlier in Section \ref{Sec:Algo} and as also done in \cite{CSJW2024, SBY2024}, in the former case (among other troubles!) the value of the ``right" regularizing parameter $\alpha$ is in general not available, and in the latter case it is immediate to see that from \eqref{Fista-Omg} with $\alpha_{K} := \left(K + 1\right)^{-\gamma}$ (for some $0< \gamma < 2$) one obtains
	\begin{equation*}
 		\omega\left(\bx^{K}\right) - \omega\left(\bx'\right) \leq \frac{2\beta\norm{\bx^{0} - \bx'}^{2}}{\left(K + 1\right)^{2 - \gamma}}, \qquad \forall \, \bx' \in X',
	\end{equation*}
 	which shows that the fast rate is lost! Furthermore, in general, it makes no sense to fix in advance the number of iterations to be performed by the algorithm FBi-PG since, eventually in order to warrant convergence of the sequence generated by FBi-PG to an optimal solution of the bi-level optimization problem \eqref{Prob:OP}, we need to have the sequence of regularized parameters $\seq{\alpha}{k}$ to converge to $0$ as $k  \rightarrow \infty$. The numerical results presented in Section \ref{Sec:numeric} will further illustrate and corroborate this situation. The next results will be useful to overcome the alluded difficulties.
\medskip

	For convenience, we define for all $k \in \nn$
	\begin{equation} \label{deta}
		d_{k} := t_{k - 1}^{2} - \left(t_{k}^{2} - t_{k}\right)\; \text{and} \; \eta_{k} := t_{k - 1}^{2}\alpha_{k - 1} - \left(t_{k}^{2} - t_{k}\right)\alpha_{k}.
	\end{equation}
	We note that since $t_{k}^{2} - t_{k} \leq t_{k - 1}^{2}$, then $d_{k} = t_{k - 1}^{2} - \left(t_{k}^{2} - t_{k}\right) \geq 0$ for all $k \in \nn$. Likewise, using the definition of $\alpha_{k}$ (see \eqref{Bi-AGUpdate:0}) we have that $\alpha_{k} < \alpha_{k - 1}$ and thus $\eta_{k} = t_{k - 1}^{2}\alpha_{k - 1} - \left(t_{k}^{2} - t_{k}\right)\alpha_{k}\geq t_{k - 1}^{2}\left(\alpha_{k - 1} - \alpha_{k}\right) > 0$ for all $k \in \nn$.
    \begin{proposition} \label{P:SimIne}
    	Let $\Seq{\bx}{k}$ be a sequence generated by FBi-PG. For all $k \in \nn$ and $\bx' \in X'$, we have
    	\begin{equation} \label{C:SimIne:0}
    		t_{k - 1}^{2}\left(F_{k - 1}\left(\bx^{k}\right) - F_{k - 1}\left(\bx'\right)\right) \leq \frac{\beta}{2}\norm{\bx^{0} - \bx'}^{2} - \sum_{s = 0}^{k - 1} \eta_{s}\left(\omega\left(\bx^{s}\right) - \omega\left(\bx'\right)\right).
    	\end{equation}
    \end{proposition}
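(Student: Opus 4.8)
The plan is to start from the basic inequality of Proposition \ref{P:BasicIne}, specialize it at $\bx = \bx' \in X'$, and shift the iteration index $k \mapsto k-1$ so that its left-hand side matches the quantity appearing in the claim. Concretely, I would introduce
\[
	P_k := t_{k - 1}^{2}\left(F_{k - 1}\left(\bx^{k}\right) - F_{k - 1}\left(\bx'\right)\right) + \frac{\beta}{2}\norm{\bz^{k} - \bx'}^{2},
\]
so that the index-shifted inequality reads $P_k \leq \left(t_{k-1}^{2} - t_{k-1}\right)\left(F_{k-1}(\bx^{k-1}) - F_{k-1}(\bx')\right) + \frac{\beta}{2}\norm{\bz^{k-1} - \bx'}^{2}$. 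Since the quadratic term in $P_k$ is nonnegative, it suffices to prove the asserted bound for $P_k$; discarding that term then gives the desired estimate.

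The crux is that the regularizer is dynamic: the right-hand side above involves $F_{k-1}$ evaluated at $\bx^{k-1}$, whereas the preceding telescoping quantity $P_{k-1}$ involves $F_{k-2}$ at $\bx^{k-1}$. I would bridge the two by writing $F_{j} \equiv \varphi + \alpha_{j}\omega$ and splitting both $F_{k-1}(\bx^{k-1}) - F_{k-1}(\bx')$ and $F_{k-2}(\bx^{k-1}) - F_{k-2}(\bx')$ into their $\varphi$- and $\omega$-parts. Collecting the coefficient of $\varphi(\bx^{k-1}) - \varphi(\bx')$ produces exactly $-d_{k-1}$ and the coefficient of $\omega(\bx^{k-1}) - \omega(\bx')$ produces exactly $-\eta_{k-1}$; this is precisely the reason $d_k$ and $\eta_k$ were defined as in \eqref{deta}. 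After the cancellation of the common quadratic terms, this yields the one-step recursion
\[
	P_k \leq P_{k-1} - d_{k-1}\left(\varphi(\bx^{k-1}) - \varphi(\bx')\right) - \eta_{k-1}\left(\omega(\bx^{k-1}) - \omega(\bx')\right).
\]

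Next I would discard the $\varphi$-term: since $d_{k-1} \geq 0$ and $\bx' \in X' \subseteq X^{\ast}$ minimizes $\varphi$ over $\real^{n}$, we have $\varphi(\bx^{k-1}) - \varphi(\bx') \geq 0$, so $-d_{k-1}(\varphi(\bx^{k-1}) - \varphi(\bx')) \leq 0$ and drops out, leaving $P_k \leq P_{k-1} - \eta_{k-1}(\omega(\bx^{k-1}) - \omega(\bx'))$. Unrolling this recursion down to $P_0$ and re-indexing the sum gives $P_k \leq P_0 - \sum_{s=0}^{k-1}\eta_s(\omega(\bx^s) - \omega(\bx'))$. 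Finally, the base term is evaluated using the initialization $t_{-1} = 0$ and $\bx^{0} = \bx^{-1}$: the factor $t_{-1}^{2} = 0$ kills the function difference while $\bz^{0} = (1 - t_{-1})\bx^{-1} + t_{-1}\bx^{0} = \bx^{0}$, hence $P_0 = \frac{\beta}{2}\norm{\bx^{0} - \bx'}^{2}$. Dropping the nonnegative quadratic term in $P_k$ then yields the claim.

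I expect the main obstacle to be the bookkeeping in the second paragraph: carefully matching the mismatched $t$-coefficients and regularization indices between the shifted inequality and $P_{k-1}$, and verifying that the leftover terms collapse exactly into $d_{k-1}$ and $\eta_{k-1}$. The sign argument that lets the $\varphi$-difference be discarded (which is what makes the telescoping clean) is the conceptual key, and one should also confirm that the recursion degenerates correctly at $k=1$, where $\eta_0 = 0$ and $t_0^{2} - t_0 = 0$.
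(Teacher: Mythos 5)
Your proposal is correct and follows essentially the same route as the paper: both start from Proposition \ref{P:BasicIne}, use the decomposition $F_k \equiv \varphi + \alpha_k\omega$ to collapse the index mismatch into exactly the $-d$ and $-\eta$ terms of \eqref{deta}, telescope down to the base case $t_{-1}=0$, $\bz^0=\bx^0$, and discard the nonnegative $\varphi$-differences. The only cosmetic differences are your index shift (a recursion for $P_k$ rather than summing the $s\mapsto s+1$ inequality) and dropping the $\varphi$-terms step by step instead of after summation.
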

    \begin{proof}
    	From Proposition \ref{P:BasicIne} (see \eqref{P:BasicIne:0}) we have for all $k \in \nn$ and $\bx \in \real^{n}$ that
		\begin{equation} \label{P:SimIne:1}
			t_{k}^{2}\left(F_{k}\left(\bx^{k + 1}\right) - F_{k}\left(\bx\right)\right) + \frac{\beta}{2}\norm{\bz^{k + 1} - \bx}^{2} \leq \left(t_{k}^{2} - t_{k}\right)\left(F_{k}\left(\bx^{k}\right) - F_{k}\left(\bx\right)\right) + \frac{\beta}{2}\norm{\bz^{k} - \bx}^{2}.
		\end{equation}    	    	
    	 Now, using the definition of $F_{k} \equiv \varphi + \alpha_{k} \omega$, we get that
    	\begin{align}
    		\left(t_{k}^{2} - t_{k}\right)\left(F_{k}\left(\bx^{k}\right) - F_{k}\left(\bx\right)\right) & = \left(t_{k}^{2} - t_{k}\right)\left(\varphi\left(\bx^{k}\right) - \varphi\left(\bx\right)\right) + \left(t_{k}^{2} - t_{k}\right)\alpha_{k}\left(\omega\left(\bx^{k}\right) - \omega\left(\bx\right)\right) \nonumber \\
			& = t_{k - 1}^{2}\left(\varphi\left(\bx^{k}\right) - \varphi\left(\bx\right)\right) + (t_{k}^{2} - t_{k} - t_{k-1}^{2}) \left(\varphi\left(\bx^{k}\right) - \varphi\left(\bx\right)\right) \nonumber \\ 			
			& + \left(\alpha_{k - 1}t_{k - 1}^{2} - \eta_{k}\right)\left(\omega\left(\bx^{k}\right) - \omega\left(\bx\right)\right) \nonumber \\
			& = t_{k - 1}^{2}\left(F_{k - 1}\left(\bx^{k}\right) - F_{k - 1}\left(\bx\right)\right) - d_{k} \left(\varphi\left(\bx^{k}\right) - \varphi\left(\bx\right)\right)  \nonumber \\
            & - \eta_{k}\left(\omega\left(\bx^{k}\right) - \omega\left(\bx\right)\right), \label{P:SimIne:2}
		\end{align}    	
    	where the second equality follows from the fact that $\left(t_{k}^{2} - t_{k}\right)\alpha_{k} = t_{k - 1}^{2}\alpha_{k - 1} - \eta_{k}$ while the last equality follows from the definition of $F_{k - 1} \equiv \varphi + \alpha_{k - 1}\omega$ and the fact that $t_{k}^{2} - t_{k} - t_{k-1}^{2} = -d_{k}$. Combining it with \eqref{P:SimIne:1} for $k = s$ yields that
		\begin{align}
			t_{s}^{2}\left(F_{s}\left(\bx^{s + 1}\right) - F_{s}\left(\bx\right)\right) + \frac{\beta}{2}\norm{\bz^{s + 1} - \bx}^{2} & \leq t_{s - 1}^{2}\left(F_{s - 1}\left(\bx^{s}\right) - F_{s - 1}\left(\bx\right)\right) + \frac{\beta}{2}\norm{\bz^{s} - \bx}^{2} \nonumber \\
			& - d_{s} \left(\varphi\left(\bx^{s}\right) - \varphi\left(\bx\right)\right) - \eta_{s}\left(\omega\left(\bx^{s}\right) - \omega\left(\bx\right)\right). \label{P:SimIne:3}
		\end{align}    	    	
        Summing \eqref{P:SimIne:3} over $s = 0 , 1 , \ldots , k - 1$ results in
        \begin{align}
         	t_{k - 1}^{2}\left(F_{k - 1}\left(\bx^{k}\right) - F_{k - 1}\left(\bx\right)\right) + \frac{\beta}{2}\norm{\bz^{k} - \bx}^{2} & \leq t_{-1}^{2}\left(F_{-1}\left(\bx^{0}\right) - F_{-1}\left(\bx\right)\right) + \frac{\beta}{2}\norm{\bz^{0} - \bx}^{2} \nonumber \\
         	& - \sum_{s = 0}^{k - 1} d_{s}\left(\varphi\left(\bx^{s}\right) - \varphi\left(\bx\right)\right) - \sum_{s = 0}^{k - 1} \eta_{s}\left(\omega\left(\bx^{s}\right) - \omega\left(\bx\right)\right) \nonumber \\
         	& = \frac{\beta}{2}\norm{\bx^{0} - \bx}^{2} - \sum_{s = 0}^{k - 1} d_{s}\left(\varphi\left(\bx^{s}\right) - \varphi\left(\bx\right)\right) \nonumber \\
          & - \sum_{s = 0}^{k - 1} \eta_{s}\left(\omega\left(\bx^{s}\right) - \omega\left(\bx\right)\right), \label{P:SimIne:4}
		\end{align}
		where the equality follows from the fact that since $\varphi$ is proper (see Assumption \ref{A:AssumptionA}) and $\alpha_{-1} = 0$, it follows that $F_{-1} \equiv \varphi + \alpha_{-1}\omega = \varphi < \infty$, along with the facts that $t_{-1} = 0$ and $\bx^{0} = \bz^{0}$. Eliminating the non-negative term $\beta\norm{\bz^{k} - \bx}^{2}/2$ on the left-hand side, and plugging $\bx = \bx' \in X'$ in \eqref{P:SimIne:4} it follows that
		\begin{equation} \label{P:SimIne:5}
    		t_{k - 1}^{2}\left(F_{k - 1}\left(\bx^{k}\right) - F_{k - 1}\left(\bx'\right)\right) \leq \frac{\beta}{2}\norm{\bx^{0} - \bx'}^{2} - \sum_{s = 0}^{k - 1} d_{s}\left(\varphi\left(\bx^{s}\right) - \varphi\left(\bx'\right)\right) - \sum_{s = 0}^{k - 1} \eta_{s}\left(\omega\left(\bx^{s}\right) - \omega\left(\bx'\right)\right).
		\end{equation}
		Since $\bx' \in X'$ we have that $\varphi\left(\bx^{s}\right) - \varphi\left(\bx'\right) \geq 0$ for all $s \in \nn$, and since $d_{s} \geq 0$ for all $s \in \nn$, the desired result follows.
	\end{proof}
	We conclude this part with the following result.
	\begin{proposition} \label{P:MainIne}
		Let $\Seq{\bx}{k}$ be a sequence generated by FBi-PG. For all $k \in \nn$ and $\bx' \in X'$, we have
		\begin{equation} \label{P:MainIne:0}
        	t_{k - 1}^{2}\left(\varphi\left(\bx^{k}\right) - \varphi\left(\bx'\right)\right) \leq \frac{\beta}{2}\norm{\bx^{0} - \bx'}^{2} + \left(\omega\left(\bx'\right) - \omega^{\ast}\right)\sum_{s = 0}^{k - 1} \alpha_{s}t_{s}.
        \end{equation}
	\end{proposition}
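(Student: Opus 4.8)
The plan is to start from the inequality \eqref{P:SimIne:5} established inside the proof of Proposition \ref{P:SimIne}, before the term $\sum_{s} d_s(\varphi(\bx^s) - \varphi(\bx'))$ is dropped. That is, I would use
\begin{equation*}
	t_{k - 1}^{2}\left(F_{k - 1}\left(\bx^{k}\right) - F_{k - 1}\left(\bx'\right)\right) \leq \frac{\beta}{2}\norm{\bx^{0} - \bx'}^{2} - \sum_{s = 0}^{k - 1} d_{s}\left(\varphi\left(\bx^{s}\right) - \varphi\left(\bx'\right)\right) - \sum_{s = 0}^{k - 1} \eta_{s}\left(\omega\left(\bx^{s}\right) - \omega\left(\bx'\right)\right).
\end{equation*}
The left-hand side decomposes as $t_{k-1}^{2}(\varphi(\bx^{k}) - \varphi(\bx')) + t_{k-1}^{2}\alpha_{k-1}(\omega(\bx^{k}) - \omega(\bx'))$ by definition of $F_{k-1} \equiv \varphi + \alpha_{k-1}\omega$. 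The goal \eqref{P:MainIne:0} isolates the inner gap $t_{k-1}^{2}(\varphi(\bx^{k}) - \varphi(\bx'))$ on the left, so I would move the $\omega$-term to the right and combine it with the $\eta_s$-sum, while also discarding the two nonnegative terms involving $\varphi$ on the right (the $d_s$-sum, since $d_s \geq 0$ and $\varphi(\bx^s) - \varphi(\bx') \geq 0$ for $\bx' \in X'$).

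After these reductions I expect to arrive at a bound of the form
\begin{equation*}
	t_{k - 1}^{2}\left(\varphi\left(\bx^{k}\right) - \varphi\left(\bx'\right)\right) \leq \frac{\beta}{2}\norm{\bx^{0} - \bx'}^{2} - t_{k-1}^{2}\alpha_{k-1}\left(\omega\left(\bx^{k}\right) - \omega\left(\bx'\right)\right) - \sum_{s = 0}^{k - 1} \eta_{s}\left(\omega\left(\bx^{s}\right) - \omega\left(\bx'\right)\right).
\end{equation*}
The central task is then to bound the combined $\omega$-contribution from above by the clean expression $(\omega(\bx') - \omega^{\ast})\sum_{s=0}^{k-1}\alpha_s t_s$. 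The natural estimate is to write $\omega(\bx^{s}) - \omega(\bx') = (\omega(\bx^{s}) - \omega^{\ast}) - (\omega(\bx') - \omega^{\ast})$ and note $\omega(\bx^{s}) - \omega^{\ast} \geq 0$ by Assumption \ref{A:AssumptionA}(iv); since each $\eta_s > 0$ (as shown after \eqref{deta}), dropping the nonnegative piece $\eta_s(\omega(\bx^s)-\omega^\ast)$ leaves $-\sum_s \eta_s(\omega(\bx^s)-\omega(\bx')) \leq (\omega(\bx')-\omega^{\ast})\sum_s \eta_s$, and similarly for the single $k$ term using $t_{k-1}^2\alpha_{k-1} > 0$.

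The main obstacle — really the one genuine computation — is to verify that the telescoping constant matches, namely that $t_{k-1}^{2}\alpha_{k-1} + \sum_{s=0}^{k-1}\eta_s = \sum_{s=0}^{k-1}\alpha_s t_s$. Here I would use the definition $\eta_s = t_{s-1}^{2}\alpha_{s-1} - (t_s^{2} - t_s)\alpha_s$ from \eqref{deta}. Summing over $s = 0,\dots,k-1$, the terms $t_{s-1}^2\alpha_{s-1}$ telescope against $t_s^2\alpha_s$, and the leftover should be $-t_{-1}^2\alpha_{-1} + t_{k-1}^2\alpha_{k-1} + \sum_{s=0}^{k-1} t_s\alpha_s$ after using the FISTA relation $t_{s+1}^{2} - t_{s+1} = t_s^{2}$ (equivalently $t_s^2 - t_s = t_{s-1}^2$), so that $(t_s^2 - t_s)\alpha_s = t_{s-1}^2\alpha_s$; since $t_{-1} = \alpha_{-1} = 0$, this collapses to exactly $\sum_{s=0}^{k-1}\alpha_s t_s - t_{k-1}^2\alpha_{k-1}$, which rearranges to the claimed identity. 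Combining this telescoping identity with the sign estimates above yields \eqref{P:MainIne:0}; care is needed only to track the index shifts in $\eta_s$ and to invoke the recursion for $t_k$ correctly.
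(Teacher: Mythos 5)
Your proposal is correct and follows essentially the same route as the paper: start from Proposition \ref{P:SimIne} (where the $d_s$-sum has already been discarded), expand $F_{k-1}=\varphi+\alpha_{k-1}\omega$, drop the nonnegative pieces $\eta_s\left(\omega\left(\bx^{s}\right)-\omega^{\ast}\right)$ and $\alpha_{k-1}t_{k-1}^{2}\left(\omega\left(\bx^{k}\right)-\omega^{\ast}\right)$, and close with the telescoping identity $t_{k-1}^{2}\alpha_{k-1}+\sum_{s=0}^{k-1}\eta_{s}=\sum_{s=0}^{k-1}\alpha_{s}t_{s}$, which is exactly the paper's computation. Two small blemishes in your sketch of that identity: the telescoping needs only the algebraic split $\left(t_{s}^{2}-t_{s}\right)\alpha_{s}=t_{s}^{2}\alpha_{s}-t_{s}\alpha_{s}$ and not the exact FISTA relation $t_{s}^{2}-t_{s}=t_{s-1}^{2}$ (which holds only as an inequality for the paper's eventual choice $t_{k}=\left(k+a\right)/a$), and your intermediate ``leftover'' has its boundary terms with flipped signs, although the collapsed form $\sum_{s=0}^{k-1}\alpha_{s}t_{s}-t_{k-1}^{2}\alpha_{k-1}$ that you then state is the correct one.
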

	\begin{proof}
		Let $k \in \nn$. For the simplicity of the proof we define the following non-negative quantities (recall that $\omega^{\ast} = \inf_{\bx \in \real^{n}} \omega\left(\bx\right)$) $\Omega_{max} := \omega\left(\bx'\right) - \omega^{\ast}$ and $\Omega_{k} := \omega\left(\bx^{k}\right) - \omega^{\ast}$, $k \in \nn$. Using the definition of $F_{k - 1} \equiv \varphi + \alpha_{k - 1}\omega$ in Proposition \ref{P:SimIne} (see \eqref{C:SimIne:0}) we obtain
 		\begin{equation*}
   			t_{k -1}^{2}\left(\varphi\left(\bx^{k}\right) - \varphi\left(\bx'\right)\right) + \alpha_{k - 1}t_{k - 1}^{2}\left(\Omega_k-\Omega_{\max}\right) \leq \frac{\beta}{2}\norm{\bx^{0} - \bx'}^{2}- \sum_{s = 0}^{k - 1} \eta_{s}\left( \Omega_k-\Omega_{\max}\right).
    	\end{equation*}
    	Now, since $\eta_{k} > 0$ and $\Omega_{k} \geq 0$ for all $k \in \nn$, it follows from the above inequality that
    	\begin{equation} \label{P:MainIne:1}
     		t_{k -1}^{2}\left(\varphi\left(\bx^{k}\right) - \varphi\left(\bx'\right)\right) - \alpha_{k - 1}t_{k - 1}^{2}\Omega_{\max} \leq \frac{\beta}{2}\norm{\bx^{0} - \bx'}^{2}+ \Omega_{\max}\sum_{s = 0}^{k - 1} \eta_{s}.
    	\end{equation}
 	To complete the proof, note that from the definition of $\eta_{k} = \alpha_{k - 1}t_{k - 1}^{2} - \left(t_{k}^{2} - t_{k}\right)\alpha_{k}$, we have that
 		\begin{equation*}
    		\sum_{s = 0}^{k - 1} \eta_{s} = \sum_{s = 0}^{k - 1} \left(\alpha_{s - 1}t_{s - 1}^{2} - \alpha_{s}t_{s}^{2}\right)
    + \sum_{s = 0}^{k - 1} \alpha_{s}t_{s} = \alpha_{-1}t_{-1}^{2} - \alpha_{k - 1}t_{k - 1}^{2} + \sum_{s = 0}^{k - 1}
    \alpha_{s}t_{s} = -\alpha_{k - 1}t_{k - 1}^{2} + \sum_{s = 0}^{k - 1} \alpha_{s}t_{s},
 		\end{equation*}
    	where the last equality follows from the fact that $\alpha_{-1} = t_{-1} = 0$. Plugging it in \eqref{P:MainIne:1}, the desired result follows.
    \end{proof} 	
	In this section, we established several results for FBi-PG that will serve as the foundation for our main convergence analysis. As noted earlier, our algorithm can be interpreted as a dynamic version of FISTA, where the objective functions vary with each iteration. This dynamic nature introduces significant challenges in the analysis, which deviates notably from the classical FISTA framework. A closer look at the basic inequality in Proposition \ref{P:BasicIne} reveals that \eqref{P:BasicIne:0} nearly exhibits a telescoping structure. However, the difficulty arises from the iteration-dependent objective function $F_{k}$. In Proposition \ref{P:SimIne}, we address this issue, but at the cost of introducing an additional term in \eqref{C:SimIne:0},
 which subsequently implies a more involved analysis and influences the strength of the results we can obtain.
	
\subsection{Rates of Convergence of FBi-PG} \label{SSec:FastRate}
	In this part, we will prove three types of rate of convergence results: on the inner objective function, on the outer objective function, and simultaneous on both the inner and outer objective functions. All the results under the standard Assumption \ref{A:AssumptionA}.
\medskip
	
	In order to simplify our developments below we will take (recall that $a \in \nn$ such that $a \geq 2$)
	\begin{equation} \label{D:Tk}
		t_{k} = \frac{k + a}{a}, \quad k \in \nn.
	\end{equation}
	Moreover, this choice is not accidental and is also crucial for the pointwise convergence analysis developed in Section \ref{Sec:ConvAnal}. However, it should be noted that all our results of Section \ref{SSec:Pillars} are valid for any sequence $\seq{t}{k}$, which satisfies $t_{k}^{2} - t_{k} \leq t_{k - 1}^{2}$ for all $k \in \nn$. It is easy to check that our choice of $t_{k}$ in \eqref{D:Tk} satisfies this requirement. Indeed, $t_{k - 1}^{2} - t_{k}^{2} + t_{k} = \left(\left(a - 1\right)^{2} + \left(a - 2\right)k\right)/a^{2} > 0$, with $a\geq 2$.
\medskip	
	
	We begin with two technical lemmas to simplify the proofs afterwards. The first one was proved in \cite[Lemma 4.3]{MS2023}.
	\begin{lemma} \label{L:TechSum}
		Let $n_{1}\in \nn$ and $n_{2} \in \nn \cup \{\infty\}$ be such that $n_{1} \leq n_{2}$. Then, for any $0<r<1$, we have
		\begin{equation*}
    		\sum_{n = n_{1}}^{n_{2}} n^{-r} \leq \frac{n^{1 - r}_{2} - (n_{1} - 1)^{1 - r}}{1 - r}.
		\end{equation*}		
 		The result remains true for $r > 1$ when $n_{1} \geq 2$.
	\end{lemma}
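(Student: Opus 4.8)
The plan is to establish this via the standard integral comparison test, exploiting the monotonicity of the map $x \mapsto x^{-r}$. Since $r > 0$, this function is strictly decreasing on $(0, \infty)$, so for any integer $n$ with $n - 1 > 0$ one has $x^{-r} \geq n^{-r}$ throughout $[n-1, n]$, and therefore
\begin{equation*}
	n^{-r} \leq \int_{n-1}^{n} x^{-r}\, dx.
\end{equation*}
The first step is to secure this pointwise bound while keeping the integration interval away from the singularity at the origin. For $0 < r < 1$ the integrand is integrable up to $0$, so the bound is valid already at $n = n_1 = 1$ (consistently with $(n_1 - 1)^{1-r} = 0^{1-r} = 0$); for $r > 1$ the hypothesis $n_1 \geq 2$ guarantees $n - 1 \geq 1 > 0$ over the whole summation range, which is exactly why this restriction is needed.

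Next I would sum the pointwise bound over $n = n_1, \ldots, n_2$, telescoping the integrals into a single one,
\begin{equation*}
	\sum_{n = n_1}^{n_2} n^{-r} \leq \sum_{n = n_1}^{n_2} \int_{n-1}^{n} x^{-r}\, dx = \int_{n_1 - 1}^{n_2} x^{-r}\, dx,
\end{equation*}
and then evaluate the (possibly improper) integral through the antiderivative $x^{1-r}/(1-r)$, which is valid for every $r \neq 1$. This yields
\begin{equation*}
	\int_{n_1 - 1}^{n_2} x^{-r}\, dx = \frac{n_2^{1-r} - (n_1 - 1)^{1-r}}{1 - r},
\end{equation*}
which is precisely the asserted bound in both regimes.

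The only place requiring a moment's care is the case $n_2 = \infty$. When $0 < r < 1$ the claim is vacuous in the infinite case, since the left-hand series diverges and the right-hand side is $+\infty$ (because $1 - r > 0$ forces $n_2^{1-r} \to \infty$), so the inequality holds trivially. When $r > 1$ the improper integral $\int_{n_1 - 1}^{\infty} x^{-r}\, dx$ converges, as $1 - r < 0$ makes $n_2^{1-r} \to 0$, leaving the finite and positive bound $(n_1 - 1)^{1-r}/(r - 1)$, in agreement with the convergence of the tail of the series. I do not anticipate any genuine obstacle: the argument is entirely elementary, and the main thing to track is the bookkeeping of the integration limits together with the sign of $1 - r$ in the two regimes.
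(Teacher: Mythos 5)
Your proof is correct: the integral comparison argument, the telescoping of the intervals, and the case analysis on the sign of $1-r$ (including the role of $n_1 \geq 2$ for $r>1$ and the $n_2 = \infty$ endpoint) are all handled properly. The paper does not prove this lemma itself but cites it from an external reference, and your argument is the standard one that such a proof would use, so there is nothing further to compare.
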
	
	\begin{lemma} \label{L:Technical}
    	For all $k \geq 1$, the following inequality holds true
    	\begin{equation} \label{L:Technical:1}
     		\sum_{s = 0}^{k - 1} \alpha_{s}t_{s} \leq
     		\begin{cases}
				\left(\left(k + a - 1\right)^{2 - \gamma} - 1\right)\left(2 - \gamma\right)^{-1}, & 0 < \gamma < 2, \\
    			\ln\left(k + a - 1 \right), & \gamma = 2, \\
    			\left(1 - \left(k + a - 1\right)^{2 - \gamma}\right)\left(\gamma - 2\right)^{-1}, & \gamma > 2. \\
    		\end{cases}
    	\end{equation}
	\end{lemma}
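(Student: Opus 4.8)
The plan is to reduce the sum to an explicit power sum and then control it by a single telescoping estimate. First I would insert the closed forms $\alpha_s = (s+a)^{-\gamma}$ from \eqref{Alphak} and the choice $t_s = (s+a)/a$ from \eqref{D:Tk}; the point of that choice is the clean cancellation $\alpha_s t_s = (s+a)^{1-\gamma}/a$, so that
\[
\sum_{s=0}^{k-1}\alpha_s t_s = \frac{1}{a}\sum_{s=0}^{k-1}(s+a)^{1-\gamma} = \frac{1}{a}\sum_{n=a}^{k+a-1} n^{1-\gamma}.
\]
This recasts the claim as a bound on a power sum with exponent $1-\gamma$, and immediately explains the three-way split in the statement: the monotonicity of $n\mapsto n^{1-\gamma}$ reverses at $\gamma=1$, and the antiderivative changes from a power to a logarithm at $\gamma=2$.

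For the regimes $1<\gamma<2$ and $\gamma>2$ one can invoke Lemma~\ref{L:TechSum} directly with $r=\gamma-1$ (noting $0<r<1$ in the first regime and $r>1$ with $n_1=a\ge 2$ in the second), obtaining $\sum_{n=a}^{k+a-1}n^{1-\gamma}\le\big((k+a-1)^{2-\gamma}-(a-1)^{2-\gamma}\big)/(2-\gamma)$. The route I would actually prefer, however, is a uniform term-wise inequality that also reaches the gaps $0<\gamma\le 1$ and $\gamma=2$ that Lemma~\ref{L:TechSum} does not cover. Namely, for $\gamma\ne 2$ I would prove
\[
\frac{(s+a)^{1-\gamma}}{a} \le \frac{(s+a)^{2-\gamma}-(s+a-1)^{2-\gamma}}{2-\gamma}, \qquad s\ge 0,
\]
with the right-hand side replaced by $\ln(s+a)-\ln(s+a-1)$ when $\gamma=2$. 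By the mean value theorem the right-hand side equals $\xi^{1-\gamma}$ for some $\xi\in(s+a-1,\,s+a)$, so the inequality reduces to $(s+a)^{1-\gamma}/a \le \xi^{1-\gamma}$.

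Here is where the only genuine obstacle appears, and it is exactly the sign change of $1-\gamma$ at $\gamma=1$. For $\gamma\ge 1$ the map $x\mapsto x^{1-\gamma}$ is non-increasing, hence $\xi^{1-\gamma}\ge (s+a)^{1-\gamma}\ge (s+a)^{1-\gamma}/a$ and we are done using only $a\ge 1$. For $0<\gamma<1$ the map is increasing, so only $\xi^{1-\gamma}\ge (s+a-1)^{1-\gamma}$ is available, and one must additionally verify $(s+a)^{1-\gamma}\le a\,(s+a-1)^{1-\gamma}$, i.e. $\big(\tfrac{s+a}{s+a-1}\big)^{1-\gamma}\le a$; this follows from $\frac{s+a}{s+a-1}\le\frac{a}{a-1}\le 2\le a$ together with $1-\gamma<1$, and it is the one spot that genuinely uses $a\ge 2$. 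Summing the term-wise bound over $s=0,\dots,k-1$ telescopes to $\big((k+a-1)^{2-\gamma}-(a-1)^{2-\gamma}\big)/(2-\gamma)$ (and to $\ln(k+a-1)-\ln(a-1)$ when $\gamma=2$). The three displayed bounds then drop out from one final comparison of $(a-1)^{2-\gamma}$ with $1$: since $a\ge 2$, we have $(a-1)^{2-\gamma}\ge 1$ when $\gamma<2$, giving the first case; $\ln(a-1)\ge 0$ with $1/a\le 1$ gives the $\gamma=2$ case; and $(a-1)^{2-\gamma}\le 1$ when $\gamma>2$, giving the third case after rewriting $\frac{\cdot}{2-\gamma}=\frac{1-(\cdot)}{\gamma-2}$. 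Everything beyond the $0<\gamma<1$ monotonicity estimate is routine mean-value and telescoping bookkeeping.
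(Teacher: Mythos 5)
Your proof is correct, but it takes a genuinely different route from the paper's. The paper first enlarges the index range, writing $\frac{1}{a}\sum_{s=a}^{k+a-1}s^{1-\gamma}\le\frac{1}{a}\sum_{s=2}^{k+a-1}s^{1-\gamma}$, and then invokes Lemma~\ref{L:TechSum} with $n_1=2$, $n_2=k+a-1$ and $r=\gamma-1$ to bound the enlarged sum, handling $\gamma=2$ separately via the harmonic-series estimate. You instead prove the single term-wise inequality $\frac{(s+a)^{1-\gamma}}{a}\le\frac{(s+a)^{2-\gamma}-(s+a-1)^{2-\gamma}}{2-\gamma}$ by the mean value theorem and telescope, splitting on the monotonicity of $x\mapsto x^{1-\gamma}$ at $\gamma=1$. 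Your route is self-contained and, notably, more robust in the regime $0<\gamma\le 1$: there $r=\gamma-1\le 0$, which lies outside the hypotheses of Lemma~\ref{L:TechSum} as stated ($0<r<1$ or $r>1$), and in fact the paper's intermediate bound \eqref{L:Technical:3} can fail for $0<\gamma<1$, since $s\mapsto s^{1-\gamma}$ is then increasing and the sum dominates, rather than is dominated by, the corresponding integral (take $\gamma=\tfrac12$ and $k+a-1=3$: then $\sqrt2+\sqrt3\approx 3.15$ exceeds $\bigl(3^{3/2}-1\bigr)/(3/2)\approx 2.80$); the final statement of the lemma survives there only because of the factor $1/a$ that the paper discards at the end. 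Your term-wise argument uses that factor exactly once and exactly where it is needed, namely to absorb the ratio $\bigl(\tfrac{s+a}{s+a-1}\bigr)^{1-\gamma}\le 2\le a$ in the increasing regime, so it covers $0<\gamma<2$, $\gamma=2$ and $\gamma>2$ uniformly and even yields the slightly sharper intermediate constant $(a-1)^{2-\gamma}$ in place of $1$ before the final relaxation.
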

    \begin{proof}
        Let $k \geq 1$. Using the definitions of $\alpha_{k}$ and $t_{k}$ (see \eqref{Bi-AGUpdate:0} and \eqref{D:Tk}, respectively), we obtain that
        \begin{equation} \label{L:Technical:2}
        	\sum_{s = 0}^{k - 1} \alpha_{s}t_{s} = \frac{1}{a}\sum_{s = 0}^{k - 1} \left(s + a\right)^{1 - \gamma} = \frac{1}{a}\sum_{s = a}^{k + a - 1} s^{1 - \gamma} \leq \frac{1}{a}\sum_{s = 2}^{k + a - 1} s^{1 - \gamma},
        \end{equation}
		where the last inequality follows from the fact that $a \geq 2$ since we sum (potentially) more elements. Now, we split to three cases. First, if $\gamma = 2$, we get from \eqref{L:Technical:2} that
        \begin{equation*}
        	\sum_{s = 0}^{k - 1} \alpha_{s}t_{s} \leq \frac{1}{a}\sum_{s = 2}^{k + a - 1} s^{-1} \leq \frac{1}{a}\ln\left(k + a - 1\right).
        \end{equation*}
		For all $0 < \gamma \neq 2$, we first obtain from \eqref{L:Technical:2} and Lemma \ref{L:TechSum} with $n_{1} = 2$, $n_{2} = k + a - 1$ and $r = \gamma - 1$, that
        \begin{equation} \label{L:Technical:3}
        	\sum_{s = 2}^{k + a - 1} s^{1 - \gamma} \leq \frac{\left(k + a - 1\right)^{2 - \gamma} - 1^{2 - \gamma}}{2 - \gamma} = \frac{\left(k + a - 1\right)^{2 - \gamma} - 1}{2 - \gamma}.
        \end{equation}
        If $0 < \gamma < 2$, the desired result follows immediately from \eqref{L:Technical:3}. If $\gamma > 2$, we obtain from \eqref{L:Technical:3} that
        \begin{equation*}
        	\sum_{s = 0}^{k - 1} \alpha_{s}t_{s} \leq \frac{1}{a} \cdot\frac{\left(k + a - 1\right)^{2 - \gamma} - 1}{2 - \gamma} = \frac{1}{a} \cdot \frac{1 - \left(k + a - 1\right)^{2 - \gamma}}{\gamma - 2}.
        \end{equation*}
        This concludes the proof of the desired result.
    \end{proof}

\subsubsection{Inner Rates of Convergence}	
	We next establish an accelerated convergence rate of order $O(1/k^{2})$ for the inner objective when $\gamma > 2$. While this result mirrors the optimal rate in single-level settings and seems to be the first fast inner rate result, it should be noted that, in the absence of a corresponding outer guarantee, its bi-level relevance is primarily illustrative of the algorithm's behavior in this regime. To the best of our knowledge, achieving simultaneous results in the $\gamma > 2$ regime is an open problem.
	
	\begin{theorem}[Fast inner rate for $\gamma > 2$] \label{T:FastRateIn}
    	Let $\Seq{\bx}{k}$ be a sequence generated by FBi-PG with $\gamma > 2$. For all $k \in \nn$ and $\bx' \in X'$, the following inequality holds true
    	\begin{equation} \label{T:FastRateIn:1}
		  	\varphi\left(\bx^{k}\right) - \varphi\left(\bx'\right) \leq \frac{a^{2}}{2\left(k + 1\right)^{2}}\left(\beta\norm{\bx^{0} - \bx'}^{2} + \frac{2}{\gamma - 2}\left(\omega\left(\bx'\right) - \omega^{\ast}\right)\right).
    	\end{equation}
	\end{theorem}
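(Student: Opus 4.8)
The plan is to derive the fast inner rate directly from Proposition~\ref{P:MainIne}, which already isolates the inner objective gap $\varphi(\bx^{k}) - \varphi(\bx')$ on the left-hand side. Recall that \eqref{P:MainIne:0} states
\begin{equation*}
	t_{k - 1}^{2}\left(\varphi\left(\bx^{k}\right) - \varphi\left(\bx'\right)\right) \leq \frac{\beta}{2}\norm{\bx^{0} - \bx'}^{2} + \left(\omega\left(\bx'\right) - \omega^{\ast}\right)\sum_{s = 0}^{k - 1} \alpha_{s}t_{s}.
\end{equation*}
The two ingredients I need are a lower bound on $t_{k-1}^{2}$ and an upper bound on the cumulative sum $\sum_{s=0}^{k-1} \alpha_{s}t_{s}$. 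For the latter, in the regime $\gamma > 2$, Lemma~\ref{L:Technical} gives exactly $\sum_{s=0}^{k-1}\alpha_{s}t_{s} \leq (1 - (k+a-1)^{2-\gamma})(\gamma-2)^{-1}$, and since $\gamma > 2$ the term $(k+a-1)^{2-\gamma}$ is positive, so the sum is bounded above by the \emph{constant} $1/(\gamma - 2)$, uniformly in $k$. This uniform boundedness is precisely what makes the $\gamma > 2$ regime special: the ``extra'' outer term contributes only an $O(1)$ additive constant rather than growing with $k$.

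Next I would substitute the explicit choice $t_{k} = (k+a)/a$ from \eqref{D:Tk}, so that $t_{k-1} = (k-1+a)/a$ and hence $t_{k-1}^{2} = (k+a-1)^{2}/a^{2}$. Dividing \eqref{P:MainIne:0} through by $t_{k-1}^{2}$ then yields
\begin{equation*}
	\varphi\left(\bx^{k}\right) - \varphi\left(\bx'\right) \leq \frac{a^{2}}{(k+a-1)^{2}}\left(\frac{\beta}{2}\norm{\bx^{0} - \bx'}^{2} + \frac{1}{\gamma - 2}\left(\omega\left(\bx'\right) - \omega^{\ast}\right)\right).
\end{equation*}
Factoring out the $1/2$ recovers the structure of the claimed bound \eqref{T:FastRateIn:1}, with the $\beta\norm{\cdot}^{2}$ and $\frac{2}{\gamma-2}(\omega(\bx') - \omega^{\ast})$ terms matching exactly. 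The only remaining gap is the denominator: the statement has $(k+1)^{2}$ whereas my computation produces $(k+a-1)^{2}$. Since $a \geq 2$ we have $k + a - 1 \geq k + 1$, hence $(k+a-1)^{2} \geq (k+1)^{2}$ and therefore $a^{2}/(k+a-1)^{2} \leq a^{2}/(k+1)^{2}$, which upgrades my bound to the stated one by monotonicity.

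The main obstacle, if any, is purely bookkeeping rather than conceptual: I must be careful that Lemma~\ref{L:Technical} is stated for $k \geq 1$, so the case $k = 0$ (where the sum is empty and the left-hand side is trivially zero or the bound holds vacuously via $t_{-1} = 0$) should be checked or dismissed separately. Likewise I should confirm that the replacement of $(k+a-1)^{2}$ by $(k+1)^{2}$ genuinely only weakens the inequality in the correct direction for all $k \in \nn$. No telescoping or fresh estimation is required here --- all the real work was already absorbed into Propositions~\ref{P:SimIne} and~\ref{P:MainIne} and into the summation Lemma~\ref{L:Technical}; the theorem is essentially a specialization of \eqref{P:MainIne:0} to the regime where the auxiliary outer sum stays bounded.
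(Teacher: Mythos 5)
Your proposal is correct and follows essentially the same route as the paper's own proof: start from Proposition \ref{P:MainIne}, bound $\sum_{s=0}^{k-1}\alpha_s t_s$ by the constant $1/(\gamma-2)$ via Lemma \ref{L:Technical} in the $\gamma>2$ regime, divide by $t_{k-1}^{2}=(k+a-1)^{2}/a^{2}$, and use $k+a-1\geq k+1$. Your remark about the $k=0$ edge case (where $t_{-1}=0$ makes the division degenerate) is a fair point of care that the paper's proof glosses over as well.
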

    \begin{proof}
        From Proposition \ref{P:MainIne} (see \eqref{P:MainIne:0}) we have that
		\begin{equation*}
			t_{k - 1}^{2}\left(\varphi\left(\bx^{k}\right) - \varphi\left(\bx'\right)\right) \leq \frac{\beta}{2}\norm{\bx^{0} - \bx'}^{2} + \left(\omega\left(\bx'\right) - \omega^{\ast}\right)\sum_{s = 0}^{k - 1} \alpha_{s}t_{s}.
		\end{equation*}
		Multiplying both sides by $t_{k - 1}^{-2} > 0$ yields that
    	\begin{equation} \label{T:FastRateIn:2}
     		\varphi\left(\bx^{k}\right) - \varphi\left(\bx'\right) \leq t_{k - 1}^{-2}\frac{\beta}{2}\norm{\bx^{0} - \bx'}^{2} + \left(\omega\left(\bx'\right) - \omega^{\ast}\right)t_{k - 1}^{-2}\sum_{s = 0}^{k - 1} \alpha_{s}t_{s}.
    	\end{equation}
    	Now, from Lemma \ref{L:Technical} with $\gamma > 2$, we have that
   	 	\begin{equation*}
   	 		t_{k - 1}^{-2}\sum_{s = 0}^{k - 1} \alpha_{s}t_{s} \leq \frac{t_{k - 1}^{-2}}{\gamma - 2}\left(1 - \left(k + a - 1\right)^{2 - \gamma}\right) \leq \frac{t_{k - 1}^{-2}}{\gamma - 2},
   	 	\end{equation*}
   	 	which combined with \eqref{T:FastRateIn:2} yields that (recall that $\omega\left(\bx'\right) - \omega^{\ast} \geq 0$ since $\omega^{\ast} = \inf_{\bx \in \real^{n}} \omega\left(\bx\right)$)
    	\begin{equation*}
     		\varphi\left(\bx^{k}\right) - \varphi\left(\bx'\right) \leq t_{k - 1}^{-2}\frac{\beta}{2}\norm{\bx^{0} - \bx'}^{2} + \frac{t_{k - 1}^{-2}}{\gamma - 2}\left(\omega\left(\bx'\right) - \omega^{\ast}\right) = \frac{t_{k - 1}^{-2}}{2}\left(\beta\norm{\bx^{0} - \bx'}^{2} + \frac{2}{\gamma - 2}\left(\omega\left(\bx'\right) - \omega^{\ast}\right)\right).
    	\end{equation*}
   	 	The desired result now follows from the fact that $t_{k - 1}^{-1} = a/\left(k + a - 1\right) \leq a/\left(k + 1\right)$, where the inequality follows from the fact that $a \geq 2$.
    \end{proof}
    While our primary goal was to achieve a fast rate of convergence, the results obtained above also enable us to derive the following rates of convergence concerning the inner problem. These rates are instrumental for deriving below additional results, particularly simultaneous rates pertaining to {\em both} the inner and outer optimization problems.
	\begin{proposition}[Inner rate for $0 < \gamma \leq 2$] \label{P:RateIn}
    	Let $\Seq{\bx}{k}$ be a sequence generated by FBi-PG. For all $k \in \nn$ and $\bx' \in X'$, we have
    	\begin{equation} \label{P:RateIn:1}
		  	\varphi\left(\bx^{k}\right) - \varphi\left(\bx'\right) \leq \frac{a^{2}\beta}{2\left(k + 1\right)^{2}}\norm{\bx^{0} - \bx'}^{2} + \frac{a^{2}c_{k}}{\left(k + 1\right)^{2}}\left(\omega\left(\bx'\right) - \omega^{\ast}\right),
    	\end{equation}
    	where
    	\begin{equation} \label{P:RateIn:2}
    		c_{k} =
    		\begin{cases}
				\left(2 - \gamma\right)^{-1}\left(k + 1\right)^{2 - \gamma}, & 0 < \gamma < 2, \\
    			\ln\left(k + a - 1\right), & \gamma = 2.
    		\end{cases}
    	\end{equation}
	\end{proposition}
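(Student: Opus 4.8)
The plan is to mirror the proof of Theorem~\ref{T:FastRateIn} almost verbatim, simply replacing the $\gamma > 2$ branch of Lemma~\ref{L:Technical} by its $0 < \gamma < 2$ and $\gamma = 2$ branches. I would start from Proposition~\ref{P:MainIne}, namely
$$t_{k - 1}^{2}\left(\varphi\left(\bx^{k}\right) - \varphi\left(\bx'\right)\right) \leq \frac{\beta}{2}\norm{\bx^{0} - \bx'}^{2} + \left(\omega\left(\bx'\right) - \omega^{\ast}\right)\sum_{s = 0}^{k - 1} \alpha_{s}t_{s},$$
and multiply through by $t_{k - 1}^{-2} > 0$. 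Using the explicit choice $t_{k - 1} = (k + a - 1)/a$ from \eqref{D:Tk}, so that $t_{k - 1}^{-2} = a^{2}/(k + a - 1)^{2}$, together with $a \geq 2$ (hence $k + a - 1 \geq k + 1$), the first term is immediately bounded by $\tfrac{a^{2}\beta}{2(k + 1)^{2}}\norm{\bx^{0} - \bx'}^{2}$, which already matches the first term of \eqref{P:RateIn:1}.

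The heart of the argument is estimating the product $t_{k - 1}^{-2}\sum_{s = 0}^{k - 1}\alpha_{s}t_{s}$ by $a^{2}c_{k}/(k + 1)^{2}$ in each regime. For $0 < \gamma < 2$ I would invoke Lemma~\ref{L:Technical} to get $\sum_{s = 0}^{k - 1}\alpha_{s}t_{s} \leq \left((k + a - 1)^{2 - \gamma} - 1\right)(2 - \gamma)^{-1} \leq (k + a - 1)^{2 - \gamma}(2 - \gamma)^{-1}$, so that
$$t_{k - 1}^{-2}\sum_{s = 0}^{k - 1}\alpha_{s}t_{s} \leq \frac{a^{2}}{2 - \gamma}\,\frac{(k + a - 1)^{2 - \gamma}}{(k + a - 1)^{2}} = \frac{a^{2}}{2 - \gamma}\,(k + a - 1)^{-\gamma}.$$
Since $\gamma > 0$ and $k + a - 1 \geq k + 1$, one has $(k + a - 1)^{-\gamma} \leq (k + 1)^{-\gamma}$, and rewriting $(k + 1)^{-\gamma} = (k + 1)^{2 - \gamma}/(k + 1)^{2}$ produces exactly $a^{2}c_{k}/(k + 1)^{2}$ with $c_{k} = (2 - \gamma)^{-1}(k + 1)^{2 - \gamma}$, as in \eqref{P:RateIn:2}. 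The case $\gamma = 2$ is fully analogous: Lemma~\ref{L:Technical} gives $\sum_{s = 0}^{k - 1}\alpha_{s}t_{s} \leq \ln(k + a - 1)$, whence $t_{k - 1}^{-2}\sum_{s = 0}^{k - 1}\alpha_{s}t_{s} \leq a^{2}(k + a - 1)^{-2}\ln(k + a - 1) \leq a^{2}(k + 1)^{-2}\ln(k + a - 1)$, which is $a^{2}c_{k}/(k + 1)^{2}$ with $c_{k} = \ln(k + a - 1)$.

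Combining the two term-wise estimates and recalling that $\omega(\bx') - \omega^{\ast} \geq 0$ (since $\omega^{\ast} = \inf_{\bx \in \real^{n}}\omega(\bx)$, so the coefficient multiplying the sum is non-negative and the inequalities on the sum may be used directly) yields \eqref{P:RateIn:1}. I do not expect any genuine obstacle here beyond careful bookkeeping: the only subtlety, shared with Theorem~\ref{T:FastRateIn}, is that the factor $t_{k - 1}^{-2}$ naturally carries $(k + a - 1)$ whereas the target bound is phrased in $(k + 1)$. Hence one must consistently exploit $k + a - 1 \geq k + 1$ in the correct direction for each of the three ingredients, the negative power $(\cdot)^{-\gamma}$, the quadratic denominator $(\cdot)^{-2}$, and the logarithm, so that dropping from $k + a - 1$ to $k + 1$ always enlarges the upper bound rather than shrinking it.
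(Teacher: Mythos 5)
Your proposal is correct and follows essentially the same route as the paper: both start from Proposition~\ref{P:MainIne} (the inequality \eqref{T:FastRateIn:2} after multiplying by $t_{k-1}^{-2}$), bound the first term via $t_{k-1}^{-1} = a/(k+a-1) \leq a/(k+1)$, and then apply the $0<\gamma<2$ and $\gamma=2$ branches of Lemma~\ref{L:Technical} together with $k+a-1 \geq k+1$ exactly as you describe. No gaps.
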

    \begin{proof}
        From \eqref{T:FastRateIn:2}, we obtain that
    	\begin{align*}
     		\varphi\left(\bx^{k}\right) - \varphi\left(\bx'\right) & \leq t_{k - 1}^{-2}\frac{\beta}{2}\norm{\bx^{0} - \bx'}^{2} + \left(\omega\left(\bx'\right) - \omega^{\ast}\right)t_{k - 1}^{-2}\sum_{s = 0}^{k - 1} \alpha_{s}t_{s} \\
     		& \leq \frac{a^{2}\beta}{2\left(k + 1\right)^{2}}\norm{\bx^{0} - \bx'}^{2} + \left(\omega\left(\bx'\right) - \omega^{\ast}\right)t_{k - 1}^{-2}\sum_{s = 0}^{k - 1} \alpha_{s}t_{s},
    	\end{align*}
    	where the last inequality follows from the fact that $t_{k - 1}^{-1} = a/\left(k + a - 1\right) \leq a/\left(k + 1\right)$ since we used the fact that $a \geq 2$. Now, to obtain the desired result, we split the proof to two cases. If $\gamma = 2$, we immediately get from Lemma \ref{L:Technical} that
    	 \begin{equation*}
   	 		t_{k - 1}^{-2}\sum_{s = 0}^{k - 1} \alpha_{s}t_{s} \leq t_{k - 1}^{-2}\ln\left(k + a - 1\right) = \frac{a^{2}\ln\left(k + a - 1\right)}{\left(k + a - 1\right)^{2}} \leq \frac{a^{2}\ln\left(k + a - 1\right)}{\left(k + 1\right)^{2}},
   	 	\end{equation*}
   	 	where the last inequality follows from the fact that $a \geq 2$. Moreover, when $0 < \gamma < 2$, we have from Lemma \ref{L:Technical} that
   	 	\begin{equation*}
   	 		t_{k - 1}^{-2}\sum_{s = 0}^{k - 1} \alpha_{s}t_{s} \leq \frac{t_{k - 1}^{-2}\left(k + a - 1\right)^{2 - \gamma} - t_{k - 1}^{-2}}{2 - \gamma} \leq \frac{a^{2}}{\left(2 - \gamma\right)\left(k + a - 1\right)^{\gamma}} \leq \frac{a^{2}}{\left(2 - \gamma\right)\left(k + 1\right)^{\gamma}},
   	 	\end{equation*}
   	 	where the second inequality follows from the definition of $t_{k}$ (see \eqref{D:Tk}) after eliminating the term $t_{k - 1}^{2} \geq 0$ and the last inequality follows from the fact that  $a \geq 2$. This concludes the proof.
    \end{proof}

\subsubsection{Outer Rate of Convergence}	
    Now, we can prove a rate of convergence in terms of the outer optimization problem.
	\begin{theorem}[Outer rate for $0 < \gamma < 2$] \label{T:RateOut}
    	Let $\Seq{\bx}{k}$ be a sequence generated by FBi-PG with $\gamma \in \left(0 , 2\right)$. For all $k \in \nn$ and $\bx' \in X'$, the following inequality holds true
        \begin{equation} \label{T:RateOut:1}
        	\min_{1 \leq s \leq k} \omega\left(\bx^{s}\right) - \omega\left(\bx'\right) \leq \frac{a^{2}\beta}{2\left(k + 1\right)^{2 - \gamma}}\norm{\bx^{0} - \bx'}^{2}.
        \end{equation}
	\end{theorem}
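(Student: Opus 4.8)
The plan is to build directly on Proposition \ref{P:SimIne}, whose bound \eqref{C:SimIne:0} already isolates a weighted sum of outer gaps $\omega(\bx^{s}) - \omega(\bx')$ on the right-hand side. First I would expand $F_{k-1} \equiv \varphi + \alpha_{k-1}\omega$ on the left of \eqref{C:SimIne:0} and move the $\eta_{s}$-sum to the left, obtaining
\[
t_{k-1}^{2}\alpha_{k-1}\left(\omega(\bx^{k}) - \omega(\bx')\right) + \sum_{s=0}^{k-1}\eta_{s}\left(\omega(\bx^{s}) - \omega(\bx')\right) \leq \frac{\beta}{2}\norm{\bx^{0} - \bx'}^{2} - t_{k-1}^{2}\left(\varphi(\bx^{k}) - \varphi(\bx')\right).
\]
Since $\bx' \in X' \subset X^{\ast}$ gives $\varphi(\bx^{k}) - \varphi(\bx') \geq 0$, the last term may simply be discarded.

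Next I would exploit the sign structure of the coefficients. From $t_{-1} = \alpha_{-1} = 0$ one has $\eta_{0} = 0$, so the $s=0$ summand vanishes, and together with the explicit $s=k$ term the left-hand side becomes a combination of $\{\omega(\bx^{s}) - \omega(\bx')\}_{s=1}^{k}$ with coefficients $\eta_{1}, \ldots, \eta_{k-1}$ and $t_{k-1}^{2}\alpha_{k-1}$, all strictly positive (recall $\eta_{s} > 0$ from \eqref{deta}). The key subtlety is that the gaps $\omega(\bx^{s}) - \omega(\bx')$ need \emph{not} be signed, as the iterates are infeasible for the inner problem; hence I rely only on positivity of the weights. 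Bounding each gap below by $\min_{1 \leq s \leq k}\left(\omega(\bx^{s}) - \omega(\bx')\right)$ and factoring this minimum out of the positive-weighted sum yields
\[
\left(\sum_{s=1}^{k-1}\eta_{s} + t_{k-1}^{2}\alpha_{k-1}\right)\left(\min_{1 \leq s \leq k}\omega(\bx^{s}) - \omega(\bx')\right) \leq \frac{\beta}{2}\norm{\bx^{0} - \bx'}^{2}.
\]
I would then identify the total weight using the telescoping identity already derived inside the proof of Proposition \ref{P:MainIne}, namely $\sum_{s=0}^{k-1}\eta_{s} = -\alpha_{k-1}t_{k-1}^{2} + \sum_{s=0}^{k-1}\alpha_{s}t_{s}$: since $\eta_{0}=0$, the term $-\alpha_{k-1}t_{k-1}^{2}$ cancels the explicit $s=k$ weight, leaving exactly $\sum_{s=1}^{k-1}\eta_{s} + t_{k-1}^{2}\alpha_{k-1} = \sum_{s=0}^{k-1}\alpha_{s}t_{s} > 0$.

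The remaining and genuinely delicate step is a \emph{lower} bound on this weight, $\sum_{s=0}^{k-1}\alpha_{s}t_{s} \geq (k+1)^{2-\gamma}/a^{2}$. By \eqref{Bi-AGUpdate:0} and \eqref{D:Tk} one has $\alpha_{s}t_{s} = a^{-1}(s+a)^{1-\gamma}$, so this is equivalent to $\sum_{j=a}^{k+a-1} j^{1-\gamma} \geq (k+1)^{2-\gamma}/a$. This is exactly where the $O(1/k^{2-\gamma})$ rate and the precise constant $a^{2}$ come from, and it is the main obstacle: the crude integral comparison $\int_{a}^{k+a}x^{1-\gamma}\,dx$ undershoots the target (it already fails at $k=1$ for $\gamma \in (1,2)$), so a sharper argument is required. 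I would establish the estimate by induction on $k$, where the base case $k=1$ reduces to $a^{2-\gamma} \geq 2^{2-\gamma}$ (true since $a \geq 2$ and $2-\gamma > 0$), and the inductive step reduces, after applying the mean value theorem to $x \mapsto x^{2-\gamma}$ on $[k+1,k+2]$, to the elementary inequality $a(k+a)^{1-\gamma} \geq (2-\gamma)\max\{(k+1)^{1-\gamma},(k+2)^{1-\gamma}\}$, which follows from $a \geq 2 > 2-\gamma$ together with $a(k+1) \geq k+a$ (treating the two monotonicity regimes $\gamma \leq 1$ and $\gamma > 1$ separately). Dividing the displayed inequality by the weight and inserting this lower bound gives $\min_{1\leq s\leq k}\omega(\bx^{s}) - \omega(\bx') \leq \beta\norm{\bx^{0}-\bx'}^{2}/\big(2\sum_{s=0}^{k-1}\alpha_{s}t_{s}\big) \leq a^{2}\beta\norm{\bx^{0}-\bx'}^{2}/\big(2(k+1)^{2-\gamma}\big)$, which is precisely \eqref{T:RateOut:1}.
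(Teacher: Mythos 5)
Your proof is correct, but it follows a genuinely different route from the paper's. The paper's proof is a dichotomy: it checks the sign of the weighted average ${\bar \eta}_{k-1}\sum_{s=1}^{k-1}\eta_{s}\omega(\bx^{s}) - \omega(\bx')$. If it is nonnegative, the entire $\eta_{s}$-sum in \eqref{C:SimIne:0} can simply be discarded, and dividing by $t_{k-1}^{2}\alpha_{k-1}$ yields the \emph{last-iterate} bound $\omega(\bx^{k}) - \omega(\bx') \leq a^{2}\beta\norm{\bx^{0}-\bx'}^{2}/\bigl(2(k+a-1)^{2-\gamma}\bigr)$, which dominates the min; if it is nonpositive, the min over $1 \leq s \leq k-1$ is already $\leq 0$ and the claim is trivial. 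You instead keep every term, factor the minimum out of the full positive-weighted combination, identify the total weight as $\sum_{s=0}^{k-1}\alpha_{s}t_{s}$ via the telescoping identity from Proposition \ref{P:MainIne}, and then prove the \emph{lower} bound $\sum_{s=0}^{k-1}\alpha_{s}t_{s} \geq (k+1)^{2-\gamma}/a^{2}$ by induction --- a technical estimate that appears nowhere in the paper (Lemma \ref{L:Technical} only gives the upper bound) and that, as you correctly observe, cannot be obtained from the naive integral comparison. I checked your induction: the base case reduces to $a^{2-\gamma} \geq 2^{2-\gamma}$ and the step to $a(k+a)^{1-\gamma} \geq (2-\gamma)\max\{(k+1)^{1-\gamma},(k+2)^{1-\gamma}\}$, both of which hold for $a \geq 2$ and $0 < \gamma < 2$; the factoring of the minimum is valid because you only use positivity of the weights, not of the gaps. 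Your argument is more uniform (no case split) and makes transparent that the accumulated weight $\sum\alpha_{s}t_{s}$ is exactly what governs the outer rate, at the price of the extra induction; the paper's case analysis is shorter, needs no lower bound on the weight sum, and its structure is reused in Theorem \ref{T:RateSimul} to handle the ergodic iterate, which your single-shot argument would not directly provide. Both routes produce the same constant $a^{2}\beta/2$.
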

    \begin{proof}
        For the simplicity of the proof, we denote ${\bar \eta}_{k} = 1/\sum_{s = 1}^{k} \eta_{s} > 0$ (recall that $\eta_{k} > 0$ for all $k \in \nn$, see \eqref{deta}). We first prove the result under the assumption that ${\bar \eta}_{k - 1}\sum_{s = 1}^{k - 1} \eta_{s}\omega\left(\bx^{s}\right) - \omega\left(\bx'\right) \geq 0$. Indeed, in this case, we have that
   		\begin{equation*}
   			0 \leq {\bar \eta}_{k - 1}\sum_{s = 1}^{k - 1} \eta_{s}\omega\left(\bx^{s}\right) - \omega\left(\bx'\right) = {\bar \eta}_{k - 1}\sum_{s = 1}^{k - 1} \eta_{s}\left(\omega\left(\bx^{s}\right) - \omega\left(\bx'\right)\right),
   		\end{equation*}
   		and therefore
   		\begin{equation} \label{T:RateOut:2}
   			0 \leq \sum_{s = 1}^{k - 1} \eta_{s}\left(\omega\left(\bx^{s}\right) - \omega\left(\bx'\right)\right) = \sum_{s = 0}^{k - 1} \eta_{s}\left(\omega\left(\bx^{s}\right) - \omega\left(\bx'\right)\right),
   		\end{equation}
   		where the equality follows from the fact that $\eta_{0} = 0$. Using Proposition \ref{P:SimIne} (see \eqref{C:SimIne:0}) combined with \eqref{T:RateOut:2}, we get that
   		\begin{equation*}
   			t_{k - 1}^{2}\left(F_{k - 1}\left(\bx^{k}\right) - F_{k - 1}\left(\bx'\right)\right) \leq \frac{\beta}{2}\norm{\bx^{0} - \bx'}^{2} - \sum_{s = 0}^{k - 1} \eta_{s}\left(\omega\left(\bx^{s}\right) - \omega\left(\bx'\right)\right) \leq \frac{\beta}{2}\norm{\bx^{0} - \bx'}^{2}.
   		\end{equation*}
   		From the definition of $F_{k - 1} \equiv \varphi + \alpha_{k - 1}\omega$ we get that
    	\begin{equation*}
    		t_{k - 1}^{2}\left(\varphi\left(\bx^{k}\right) - \varphi\left(\bx'\right)\right) + \alpha_{k - 1}t_{k - 1}^{2}\left(\omega\left(\bx^{k}\right) - \omega\left(\bx'\right)\right) = t_{k - 1}^{2}\left(F_{k - 1}\left(\bx^{k}\right) - F_{k - 1}\left(\bx'\right)\right) \leq \frac{\beta}{2}\norm{\bx^{0} - \bx'}^{2},
    	\end{equation*}
   		and since $\varphi\left(\bx^{k}\right) - \varphi\left(\bx'\right) \geq 0$ for all $k \in \nn$, after dividing both sides by $t_{k - 1}^{2}\alpha_{k - 1}$ we then obtain that
   		\begin{equation} \label{T:RateOut:3}
   			\omega\left(\bx^{k}\right) - \omega\left(\bx'\right) \leq \alpha_{k -1}^{-1}t_{k - 1}^{-2}\frac{\beta}{2}\norm{\bx^{0} - \bx'}^{2} = \frac{a^{2}\beta}{2\left(k + a - 1\right)^{2 - \gamma}}\norm{\bx^{0} - \bx'}^{2},
   		\end{equation}
		where the last equality follows from the definitions of $\alpha_{k}$ and $t_{k}$ (see \eqref{Bi-AGUpdate:0} and \eqref{D:Tk}, respectively). Recalling that $a \geq 2$ immediately proves \eqref{T:RateOut:1} since obviously $\min_{1 \leq s \leq k} \omega\left(\bx^{s}\right) \leq \omega\left(\bx^{k}\right)$. Since the proof was done under the assumption that ${\bar \eta}_{k - 1}\sum_{s = 1}^{k - 1} \eta_{s}\omega\left(\bx^{s}\right) - \omega\left(\bx'\right) \geq 0$, we note that if
   		\begin{equation} \label{T:RateOut:4}		
			{\bar \eta}_{k - 1}\sum_{s = 1}^{k - 1} \eta_{s}\omega\left(\bx^{s}\right) - \omega\left(\bx'\right) \leq 0,
   		\end{equation}
		then we have that
   		\begin{equation*}
   			\min_{1 \leq s \leq k} \omega\left(\bx^{s}\right) - \omega\left(\bx'\right) \leq \min_{1 \leq s \leq k - 1} \omega\left(\bx^{s}\right) - \omega\left(\bx'\right) \leq {\bar \eta}_{k - 1}\sum_{s = 1}^{k - 1} \eta_{s}\omega\left(\bx^{s}\right) - \omega\left(\bx'\right) \leq 0,
   		\end{equation*}
   		and therefore \eqref{T:RateOut:1} obviously true.
	\end{proof}		

\subsubsection{Simultaneous Rates of Convergence}	

	Now, we prove a simultaneous ergodic rate of convergence for both the inner and outer optimization problems.
	\begin{theorem}[Simultaneous rates for $\gamma = 1$] \label{T:RateSimul}
    	Let $\Seq{\bx}{k}$ be a sequence generated by FBi-PG with $\gamma = 1$. For all $k \in \nn$ and $\bx' \in X'$, the following inequalities hold true
    	\begin{equation} \label{T:RateSimul:1}
		  	\varphi\left({\tilde \bx}^{k}\right) - \varphi\left(\bx'\right) \leq \frac{\pi^{2}a^{2}\beta}{12k}\norm{\bx^{0} - \bx'}^{2} + \frac{a^{2}\ln\left(k + 1\right)}{k}\left(\omega\left(\bx'\right) - \omega^{\ast}\right),
    	\end{equation}
		and
        \begin{equation} \label{T:RateSimul:2}
        	\omega\left({\tilde \bx}^{k}\right) - \omega\left(\bx'\right) \leq \frac{a^{2}\beta}{2\left(k + 1\right)}\norm{\bx^{0} - \bx'}^{2},
        \end{equation}
        where ${\tilde \bx}^{k} = \argmin \left\{ \omega\left(\bx\right) : \, \bx \in \left\{ \bx^{k} , {\bar \bx}^{k} \right\} \right\}$ and ${\bar \bx}^{k} = \sum_{s = 1}^{k} \bx^{s}/k$.
	\end{theorem}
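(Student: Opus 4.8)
The plan is to prove both rates by evaluating them separately at the two candidate points $\bx^{k}$ and ${\bar \bx}^{k}$ and then invoking the definition of ${\tilde \bx}^{k}$. The key simplification in the regime $\gamma = 1$ is that, with the choice $t_{k} = (k+a)/a$ (see \eqref{D:Tk}) and $\alpha_{k} = (k+a)^{-1}$, one has $\alpha_{s}t_{s} = 1/a$ for every $s$, and moreover $\eta_{s} = t_{s-1}^{2}\alpha_{s-1} - (t_{s}^{2}-t_{s})\alpha_{s} = (a-1)/a^{2}$ is constant for $s \geq 1$ (with $\eta_{0}=0$), while $t_{k-1}^{2}\alpha_{k-1} = (k-1+a)/a^{2}$. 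These constant weights are exactly what make the uniform average ${\bar \bx}^{k}$ the natural ergodic iterate here.

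For the inner rate I would first bound $\varphi$ at the average. By convexity of $\varphi$ (Jensen's inequality) and Proposition~\ref{P:RateIn} specialized to $\gamma = 1$ (so that $c_{s} = (s+1)$ and the outer-gap coefficient at index $s$ becomes $a^{2}/(s+1)$), summing over $s = 1,\dots,k$ gives
\begin{equation*}
	\varphi({\bar \bx}^{k}) - \varphi(\bx') \leq \frac{a^{2}\beta}{2k}\norm{\bx^{0}-\bx'}^{2}\sum_{s=1}^{k}\frac{1}{(s+1)^{2}} + \frac{a^{2}}{k}(\omega(\bx')-\omega^{\ast})\sum_{s=1}^{k}\frac{1}{s+1}.
\end{equation*}
The two series are then controlled by $\sum_{s\geq 1}(s+1)^{-2} \leq \sum_{s\geq 1}s^{-2} = \pi^{2}/6$ and $\sum_{s=1}^{k}(s+1)^{-1} \leq \ln(k+1)$, which yields exactly \eqref{T:RateSimul:1} at ${\bar \bx}^{k}$. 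Since Proposition~\ref{P:RateIn} also shows that $\bx^{k}$ satisfies an even sharper inner bound (its first term is $O(1/k^{2})$ and its second term $a^{2}/(k+1)$ is dominated by $a^{2}\ln(k+1)/k$ because $k/(k+1) \leq \ln(k+1)$ for $k\geq 1$), both candidates obey \eqref{T:RateSimul:1}, and hence so does ${\tilde \bx}^{k}$ regardless of which point is selected.

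For the outer rate I would again start from Jensen, $\omega({\bar \bx}^{k}) - \omega(\bx') \leq \tfrac{1}{k}\sum_{s=1}^{k}(\omega(\bx^{s})-\omega(\bx'))$, and feed in Proposition~\ref{P:SimIne}. Writing $u_{s} := \omega(\bx^{s})-\omega(\bx')$ and using $F_{k-1} \equiv \varphi + \alpha_{k-1}\omega$ together with $\varphi(\bx^{k}) - \varphi(\bx') \geq 0$, the inequality \eqref{C:SimIne:0} collapses, with the constant weights above, to
\begin{equation*}
	\frac{k-1+a}{a^{2}}u_{k} + \frac{a-1}{a^{2}}\sum_{s=1}^{k-1}u_{s} \leq \frac{\beta}{2}\norm{\bx^{0}-\bx'}^{2}.
\end{equation*}
Rearranging this to isolate $\sum_{s=1}^{k}u_{s}$ produces $\tfrac{a-1}{a^{2}}\sum_{s=1}^{k}u_{s} \leq \tfrac{\beta}{2}\norm{\bx^{0}-\bx'}^{2} - \tfrac{k}{a^{2}}u_{k}$, i.e.\ a bound on $\omega({\bar \bx}^{k}) - \omega(\bx')$ that still carries the term $-u_{k}/(a-1)$. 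Since ${\tilde \bx}^{k}$ also satisfies $\omega({\tilde \bx}^{k}) - \omega(\bx') \leq \omega(\bx^{k})-\omega(\bx') = u_{k}$, I would combine these two upper bounds — e.g.\ by the convex combination with weights $1/a$ and $(a-1)/a$, or equivalently by splitting on the size of $u_{k}$ — to cancel the $u_{k}$ dependence and obtain $\omega({\tilde \bx}^{k}) - \omega(\bx') \leq \tfrac{a\beta}{2k}\norm{\bx^{0}-\bx'}^{2}$, which implies \eqref{T:RateSimul:2} since $(a-1)k \geq 1$ gives $\tfrac{a}{k} \leq \tfrac{a^{2}}{k+1}$ for $a \geq 2$, $k \geq 1$.

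The main obstacle is precisely this last step of the outer estimate: the individual gaps $u_{s} = \omega(\bx^{s})-\omega(\bx')$ need not be nonnegative, since the iterates are infeasible for the inner problem and may undershoot $\omega(\bx')$, so one cannot simply discard terms to bound the average. The resolution exploits that ${\tilde \bx}^{k}$ is the better of the last iterate and the average: whenever $u_{k}$ is large the last-iterate bound is useless but the average bound is strong, and vice versa, so combining them removes the dependence on $u_{k}$. Everything else is routine bookkeeping of constants, driven throughout by $a \geq 2$ and $k \geq 1$.
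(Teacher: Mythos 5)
Your proof is correct, and the inner bound \eqref{T:RateSimul:1} is obtained exactly as in the paper: Proposition~\ref{P:RateIn} with $c_{s}=s+1$, Jensen for the average, the constants $\pi^{2}/6$ and $\ln(k+1)$, and the observation that the last iterate's bound is dominated by the same right-hand side. Where you genuinely diverge is the outer bound \eqref{T:RateSimul:2}. The paper recycles the dichotomy from Theorem~\ref{T:RateOut}: either the $\eta$-weighted average of the outer gaps is nonnegative, in which case that whole sum is discarded from \eqref{C:SimIne:0} and the \emph{last iterate} satisfies the bound, or it is nonpositive, in which case the \emph{average iterate} is already below $\omega(\bx')$; the min over the two candidates covers both cases. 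You instead keep the last-iterate term $t_{k-1}^{2}\alpha_{k-1}u_{k}=\tfrac{k-1+a}{a^{2}}u_{k}$ explicitly in the telescoped inequality, so that you hold two simultaneous upper bounds on $\omega(\tilde\bx^{k})-\omega(\bx')$, namely $u_{k}$ and $\tfrac{a^{2}\beta}{2k(a-1)}\norm{\bx^{0}-\bx'}^{2}-\tfrac{u_{k}}{a-1}$, and you cancel the sign-indefinite $u_{k}$ by the convex combination with weights $1/a$ and $(a-1)/a$. This is a smoother execution of the same underlying idea (``take the better of last iterate and average''): it avoids the case split entirely and in fact yields the marginally sharper constant $\tfrac{a\beta}{2k}$, which you correctly check implies the stated $\tfrac{a^{2}\beta}{2(k+1)}$ using $(a-1)k\geq 1$. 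All the supporting computations ($\eta_{s}\equiv(a-1)/a^{2}$ for $s\geq 1$, $\eta_{0}=0$, $t_{k-1}^{2}\alpha_{k-1}=(k-1+a)/a^{2}$, the rearrangement to $\tfrac{a-1}{a^{2}}\sum_{s=1}^{k}u_{s}\leq\tfrac{\beta}{2}\norm{\bx^{0}-\bx'}^{2}-\tfrac{k}{a^{2}}u_{k}$) are correct, so there is no gap.
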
	
	\begin{proof}
		In order to prove \eqref{T:RateSimul:1}, for all $ 1 \leq s \leq k$, we get from Proposition \ref{P:RateIn} (see \eqref{P:RateIn:1}) with $k = s$ that (recall that $\gamma = 1$, and hence $c_{s} \equiv s + 1$)
		\begin{equation}
			\varphi\left(\bx^{s}\right) - \varphi\left(\bx'\right) \leq \frac{a^{2}\beta}{2\left(s + 1\right)^{2}}\norm{\bx^{0} - \bx'}^{2} + \frac{a^{2}}{s + 1}\left(\omega\left(\bx'\right) - \omega^{\ast}\right).
		\end{equation}
		Therefore, from the convexity of $\varphi$ we obtain that
		\begin{align*}
   			\varphi\left({\bar \bx}^{k}\right) - \varphi\left(\bx'\right) & \leq \frac{1}{k}\sum_{s = 1}^{k} \left(\varphi\left(\bx^{s}\right) - \varphi\left(\bx'\right)\right) \\
   			& \leq \frac{1}{k}\sum_{s = 1}^{k} \left(\frac{a^{2}\beta}{2\left(s + 1\right)^{2}}\norm{\bx^{0} - \bx'}^{2} + \frac{a^{2}}{s + 1}\left(\omega\left(\bx'\right) - \omega^{\ast}\right)\right) \\
   			& = \frac{1}{k}\left(\frac{a^{2}\beta}{2}\norm{\bx^{0} - \bx'}^{2}\sum_{s = 2}^{k + 1} \frac{1}{s^{2}} + a^{2}\left(\omega\left(\bx'\right) - \omega^{\ast}\right)\sum_{s = 2}^{k + 1} \frac{1}{s}\right).
   		\end{align*}	
   		Now, using classical results we have that $\sum_{s = 2}^{k + 1} s^{-2} < \pi^{2}/6$ and $\sum_{s = 2}^{k + 1} s^{-1} \leq \ln\left(k + 1\right)$, which implies that
		\begin{equation} \label{T:RateSimul:21}
			\varphi\left({\bar \bx}^{k}\right) - \varphi\left(\bx'\right) \leq \frac{\pi^{2}a^{2}\beta}{12k}\norm{\bx^{0} - \bx'}^{2} + \frac{a^{2}\ln\left(k + 1\right)}{k}\left(\omega\left(\bx'\right) - \omega^{\ast}\right).
		\end{equation}	
		Again from Proposition \ref{P:RateIn} (see \eqref{P:RateIn:1}) and recall that $\gamma = 1$ (hence $c_k \equiv k + 1$) we obtain that
		\begin{align}
			\varphi\left(\bx^{k}\right) - \varphi\left(\bx'\right) & \leq \frac{a^{2}\beta}{2\left(k + 1\right)^{2}}\norm{\bx^{0} - \bx'}^{2} + \frac{a^{2}}{k + 1}\left(\omega\left(\bx'\right) - \omega^{\ast}\right) \nonumber \\
			& \leq \frac{\pi^{2}a^{2}\beta}{12k}\norm{\bx^{0} - \bx'}^{2} + \frac{a^{2}\ln\left(k + 1\right)}{k}\left(\omega\left(\bx'\right) - \omega^{\ast}\right),  \label{T:RateSimul:22}
		\end{align}
		where the last inequality follows from the fact that $1/2 < \pi^{2}/12$ and thanks to the convexity of the function $t\ln(t)$ for $t \geq 0$ we have that $k < \left(k + 1\right)\ln\left(k + 1\right)$. This concludes the proof of \eqref{T:RateSimul:1} due to the definition of ${\tilde \bx}^{k}$ by combining \eqref{T:RateSimul:21} and \eqref{T:RateSimul:22}.
\medskip		
		
		Now, in order to prove \eqref{T:RateSimul:2}, since $\gamma = 1$ and with $t_{k} = \left(k + a\right)/a$ (see \eqref{D:Tk}) we get from the definition of $\alpha_{k}$ (see \eqref{Bi-AGUpdate:0}) that
		\begin{equation} \label{T:RateSimul:3}
			\eta_{k} = \alpha_{k - 1}t_{k - 1}^{2} - \alpha_{k}t_{k}^{2} + \alpha_{k}t_{k} = \frac{k + a - 1}{a^{2}} - \frac{k + a}{a^{2}} + \frac{1}{a} = \frac{a - 1}{a^{2}}.
		\end{equation}		
		Hence, ${\bar \eta}_{k} := 1/\sum_{s = 1}^{k} \eta_{k} = a^{2}/\left(k\left(a - 1\right)\right)$ for all $k \in \nn$ and therefore ${\bar \bx}^{k} = \sum_{s = 1}^{k} \bx^{s}/k = {\bar \eta}_{k}\sum_{s = 1}^{k} \eta_{s}\bx^{s}$.
\medskip
		
		Following the proof of Theorem \ref{T:RateOut} (see \eqref{T:RateOut:3} and \eqref{T:RateOut:4}) we easily obtain that
        \begin{equation} \label{T:RateSimul:4}
        	\min \left\{ \omega\left(\bx^{k}\right) , {\bar \eta}_{k}\sum_{s = 1}^{k} \eta_{s}\omega\left(\bx^{s}\right) \right\}  - \omega\left(\bx'\right) \leq \frac{a^{2}\beta}{2\left(k + 1\right)}\norm{\bx^{0} - \bx'}^{2}.
        \end{equation}		
        Moreover, from the convexity of $\omega$, we get that
		\begin{equation*}
   			\omega\left({\bar \bx}^{k}\right) \leq \frac{1}{k}\sum_{s = 1}^{k} \omega\left(\bx^{s}\right) = \frac{a^{2}}{k\left(a - 1\right)}\sum_{s = 1}^{k} \eta_{s}\omega\left(\bx^{s}\right) = {\bar \eta}_{k}\sum_{s = 1}^{k} \eta_{s}\omega\left(\bx^{s}\right) ,
   		\end{equation*}		
   		where the first and last equalities follow from the facts that $\eta_{k} = \left(a - 1\right)/a^{2}$ and ${\bar \eta}_{k} = a^{2}/\left(k\left(a - 1\right)\right)$, respectively. Integrating it into \eqref{T:RateSimul:4} yields that
		\begin{equation*}
   			\min \left\{ \omega\left(\bx^{k}\right) , \omega\left({\bar \bx}^{k}\right) \right\} - \omega\left(\bx'\right) \leq \min \left\{ \omega\left(\bx^{k}\right) , {\bar \eta}_{k}\sum_{s = 1}^{k} \eta_{s}\omega\left(\bx^{s}\right) \right\} - \omega\left(\bx'\right) \leq \frac{a^{2}\beta}{2\left(k + 1\right)}\norm{\bx^{0} - \bx'}^{2}.
   		\end{equation*}	
   		The desired result now follows immediately from the definition of ${\tilde \bx}^{k}$.
	\end{proof}

\subsection{Improved Simultaneous Rates} \label{SSec:ImprovedRate}
	In order to obtain improved simultaneous rates of convergence, we will impose a H\"olderian type error bound assumption on the inner objective function $\varphi$. For more details and examples of functions satisfying such property, we refer the interested readers to \cite{BNPS2015}. We first recall the definition.
	\begin{definition}[H\"olderian type error bound]
		A proper, lower semicontinuous and convex function $h : \real^{n} \rightarrow \erl$ satisfies an H\"olderian error bound with a parameter $\tau > 0$ if for all $\bx \in \real^{n}$
		\begin{equation}
			\tau \cdot \dist\left(\bx , \argmin h\right)^{2} \leq h\left(\bx\right) - \min_{\bz \in \real^{n}} h\left(\bz\right).
		\end{equation}			
	\end{definition}
	Throughout this section, we make the following additional assumption.
	\begin{assumption} \label{A:AssumpB}
    	The following items hold.
    	\begin{itemize}
        	\item[(i)] $\varphi$ satisfies the H\"olderian error bound with a parameter $\tau > 0$.
        	\item[(ii)] $\idom{\omega} \cap X' \neq \emptyset$.
    	\end{itemize}
	\end{assumption}
	It should be noted that Assumption B(ii) is very mild, as any function with a full domain, such as $\norm{\cdot}_{1}$, would satisfy it. Moreover, note that the subdifferential $\partial \omega\left(\bx'\right)$ is not empty for any $\bx' \in \idom{\omega} \cap X'$.
\medskip
	
	Before establishing the promised improved simultaneous rates of convergence, we first need the following simple bound for $\eta_{k}$, $k \in \nn$, which will be used throughout the section.
	\begin{lemma}\label{L:boundeta}
		$1 < \gamma < 2$. For all $k \in \nn$, one has
		\begin{equation} \label{Etaproperty}
        	\eta_{k}  < \frac{1}{2}\left(k + 1\right)^{1 - \gamma}.
        \end{equation}
   	\end{lemma}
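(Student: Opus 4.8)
The plan is to prove the bound $\eta_k < \tfrac{1}{2}(k+1)^{1-\gamma}$ directly from the exact formula for $\eta_k$ under the specific choice of $t_k = (k+a)/a$ from \eqref{D:Tk} together with $\alpha_k = (k+a)^{-\gamma}$. First I would write out $\eta_k = \alpha_{k-1}t_{k-1}^2 - (t_k^2 - t_k)\alpha_k$ explicitly. Using $t_k^2 - t_k = t_k(t_k - 1) = \frac{(k+a)(k)}{a^2}$ and $t_{k-1}^2 = \frac{(k+a-1)^2}{a^2}$, we get
\begin{equation*}
	\eta_k = \frac{(k+a-1)^2}{a^2}(k+a-1)^{-\gamma} - \frac{(k+a)k}{a^2}(k+a)^{-\gamma} = \frac{(k+a-1)^{2-\gamma}}{a^2} - \frac{k(k+a)^{1-\gamma}}{a^2}.
\end{equation*}
This reduces the statement to a clean scalar inequality purely in $k$, $a$, and $\gamma$.

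Next I would manipulate this expression toward the target. The natural approach is to factor out $(k+a)^{1-\gamma}/a^2$ (or alternatively relate both terms to a common power) and bound the difference. One clean route is to write $(k+a-1)^{2-\gamma} = (k+a-1)\cdot(k+a-1)^{1-\gamma}$ and compare the two terms term-by-term, exploiting monotonicity: since $1-\gamma < 0$, we have $(k+a-1)^{1-\gamma} \geq (k+a)^{1-\gamma}$, so I would first seek an upper bound that collapses the difference. A convenient identity is
\begin{equation*}
	\eta_k = \frac{1}{a^2}\left[(k+a-1)^{2-\gamma} - k(k+a)^{1-\gamma}\right],
\end{equation*}
and since $(k+a-1)^{2-\gamma} \le (k+a)(k+a)^{1-\gamma}$ would go the wrong way, I expect the cleaner path is to use the mean value theorem on $x \mapsto x^{2-\gamma}$ to control $(k+a-1)^{2-\gamma}$ against $(k+a)^{2-\gamma}$, then combine with the $-k(k+a)^{1-\gamma}$ term so that the leading $(k+a)^{2-\gamma}$ contributions cancel, leaving a remainder of order $(k+a)^{1-\gamma}$ with an explicitly controllable constant.

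Carrying this out, the mean value theorem gives $(k+a)^{2-\gamma} - (k+a-1)^{2-\gamma} = (2-\gamma)\xi^{1-\gamma}$ for some $\xi \in (k+a-1, k+a)$, and since $1-\gamma<0$ we can bound $\xi^{1-\gamma} \le (k+a-1)^{1-\gamma}$. Substituting the resulting bound for $(k+a-1)^{2-\gamma}$ into the formula and using $(k+a)^{2-\gamma} = (k+a)\cdot(k+a)^{1-\gamma}$, the term $k(k+a)^{1-\gamma}$ cancels the dominant part, leaving $\eta_k \le \frac{1}{a^2}\bigl[a(k+a)^{1-\gamma} - (2-\gamma)\xi^{1-\gamma}\bigr]$ or a similar expression, after which I bound $(k+a) \le$ a constant multiple relating $(k+a)^{1-\gamma}$ to $(k+1)^{1-\gamma}$ via $a \ge 2$. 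The main obstacle I anticipate is the bookkeeping of which direction each monotonicity/convexity bound must point: because $1-\gamma$ and $2-\gamma$ have opposite signs when $1<\gamma<2$, one must be careful that every replacement of $(k+a-1)$ by $(k+a)$ (or by $\xi$) is in the inequality-preserving direction, and that the final constant genuinely falls below $\tfrac{1}{2}$ for all $k \in \nn$ and all $a \ge 2$. I would verify the constant sharpness at the worst case (likely $k=0$ or the large-$k$ asymptotics) to confirm the strict $\tfrac12$ factor holds uniformly.
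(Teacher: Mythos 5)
Your proposal is correct and follows essentially the same route as the paper: expand $\eta_k=\tfrac{1}{a^2}(k+a-1)^{2-\gamma}-\tfrac{1}{a^2}(k+a)^{2-\gamma}+\tfrac{1}{a}(k+a)^{1-\gamma}$, discard the nonpositive difference of the first two terms, and conclude via $\tfrac{1}{a}\le\tfrac12$ and $(k+a)^{1-\gamma}<(k+1)^{1-\gamma}$. The mean value theorem step is just a heavier way of observing that $(k+a-1)^{2-\gamma}<(k+a)^{2-\gamma}$ (monotonicity of $x\mapsto x^{2-\gamma}$ for $2-\gamma>0$), which is all the paper uses; otherwise the cancellation you describe yields exactly the paper's bound.
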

	\begin{proof}
		Using the definitions of $\alpha_{k} = \left(k + a\right)^{-\gamma}$ and $t_{k} = \left(k + a\right)/a$ for all $k \in \nn$, it follows that
		\begin{equation*}
        	\eta_{k} = \alpha_{k - 1}t_{k - 1}^{2} - \alpha_{k}\left(t_{k}^{2} - t_{k}\right) = \frac{1}{a^{2}}\left(k + a - 1\right)^{2 - \gamma} - \frac{1}{a^{2}}\left(k + a\right)^{2 - \gamma} + \frac{1}{a}\left(k + a\right)^{1 - \gamma} < \frac{1}{2}\left(k + 1\right)^{1 - \gamma},
        \end{equation*}
		where the last inequality follows from the facts that $\left(k + a - 1\right)^{2 - \gamma} < \left(k + a\right)^{2 - \gamma}$ and $\left(k + a\right)^{1 - \gamma} < \left(k + 1\right)^{1 - \gamma}$ since $1 < \gamma < 2$ and $a \geq 2 > 1$.
	\end{proof}
	In the next theorem, we establish the improved simultaneous convergence rates.
	\begin{theorem}[Simultaneous rates under H\"olderian EB for $1 < \gamma < 2$] \label{T:SimulHolder}
    	Let $\Seq{\bx}{k}$ be a sequence generated by FBi-PG with $1 < \gamma < 2$. For all $k \in \nn$ and $\bx' \in \idom{\omega} \cap X'$, the following inequalities hold true
    	\begin{itemize}
        	\item[(i)] $\varphi\left(\bx^{k}\right) - \varphi\left(\bx'\right) \leq \frac{aC}{\left(k + 1\right)^{2}}$,
            \item[(ii)] $\omega\left(\bx'\right) - \omega\left(\bx^{k}\right) \leq \frac{C}{a^{2}\left(k + 1\right)}$
        	\item[(iii)] $\omega\left(\bx^{k}\right) - \omega\left(\bx'\right) \leq \frac{C}{\left(k + 1\right)^{2 - \gamma}}$,
    	\end{itemize}
    	where $C = a^{2}\max \left\{ \beta\norm{\bx^{0} - \bx'}^{2} + \left(\omega\left(\bx'\right) - \omega^{\ast}\right) , a^{3}\rho^{2}\tau^{-1}\left(\gamma - 1\right)^{-2} \right\}$ and $\rho = \norm{\xi}$ for some $\xi \in \partial \omega\left(\bx'\right)$.
	\end{theorem}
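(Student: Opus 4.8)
Step 1 (the bridge inequality). Write $\Delta_k := \varphi(\bx^{k}) - \varphi(\bx') \geq 0$. The single fact that unlocks everything is
\[
	\omega(\bx') - \omega(\bx^{k}) \leq \frac{\rho}{\sqrt{\tau}}\sqrt{\Delta_k}, \qquad \forall k \in \nn .
\]
To establish it I would let $\bp^{k} := P_{X^{\ast}}(\bx^{k})$ be the projection onto the inner solution set $X^{\ast} = \argmin\varphi$. Since $\bx' \in X'$ minimizes $\omega$ over $X^{\ast}$ and $\bx' \in \idom{\omega}$ (Assumption~\ref{A:AssumpB}(ii)), the optimality of $\bx'$ over $X^{\ast}$ furnishes a subgradient $\xi \in \partial\omega(\bx')$ with $-\xi \in N_{X^{\ast}}(\bx')$; this is the $\xi$ with $\rho = \norm{\xi}$. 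Convexity of $\omega$ gives $\omega(\bx') - \omega(\bx^{k}) \leq \langle \xi, \bx' - \bx^{k}\rangle$, and splitting $\bx' - \bx^{k} = (\bx' - \bp^{k}) + (\bp^{k} - \bx^{k})$, the first term is $\leq 0$ by the normal-cone inclusion (as $\bp^{k} \in X^{\ast}$) while the second is $\leq \rho\,\dist(\bx^{k}, X^{\ast})$. The H\"olderian error bound (Assumption~\ref{A:AssumpB}(i)) then gives $\dist(\bx^{k}, X^{\ast}) \leq \sqrt{\Delta_k/\tau}$, closing the inequality. The delicate point is choosing the \emph{optimality} subgradient so the cross term $\langle \xi, \bx' - \bp^{k}\rangle$ vanishes; Assumption~\ref{A:AssumpB}(ii) is exactly what guarantees its existence.

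Step 2 (fast inner rate (i), by induction). Expanding $F_{k-1} = \varphi + \alpha_{k-1}\omega$ in \eqref{C:SimIne:0} and using $t_{k-1}^{2} = (k+a-1)^{2}/a^{2}$ and $\alpha_{k-1}t_{k-1}^{2} = (k+a-1)^{2-\gamma}/a^{2}$, Proposition~\ref{P:SimIne} becomes
\[
	\frac{(k+a-1)^{2}}{a^{2}}\Delta_k + \frac{(k+a-1)^{2-\gamma}}{a^{2}}\bigl(\omega(\bx^{k})-\omega(\bx')\bigr) \leq \frac{\beta}{2}\norm{\bx^{0}-\bx'}^{2} + \sum_{s=0}^{k-1}\eta_{s}\bigl(\omega(\bx')-\omega(\bx^{s})\bigr).
\]
Applying Step~1 to \emph{every} term $\omega(\bx')-\omega(\bx^{s})$ (including $s=k$, after transferring the middle term to the right), and noting $\eta_{0}=0$ so the sum starts at $s=1$, under the inductive hypothesis $\Delta_s \leq aC/(s+1)^{2}$ for $s<k$ one bounds, via Lemma~\ref{L:boundeta} and Lemma~\ref{L:TechSum}, $\sum_{s=1}^{k-1}\eta_{s}\sqrt{\Delta_s} \leq \tfrac12\sqrt{aC}\sum_{n=2}^{k} n^{-\gamma} \leq \tfrac{\sqrt{aC}}{2(\gamma-1)}$. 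This turns the display into a quadratic inequality $A\Delta_k \leq R + E\sqrt{\Delta_k}$, where $A = (k+a-1)^{2}/a^{2}$, $E = (k+a-1)^{2-\gamma}\rho/(a^{2}\sqrt{\tau})$ and $R = \tfrac{\beta}{2}\norm{\bx^{0}-\bx'}^{2} + \tfrac{\rho\sqrt{aC}}{2\sqrt{\tau}(\gamma-1)}$. To close the induction it suffices that the parabola $A t^{2} - E t - R$ be nonnegative at $t = \sqrt{aC}/(k+1)$, which reduces to $\tfrac{C}{a} \geq \tfrac{\beta}{2}\norm{\bx^{0}-\bx'}^{2} + \tfrac{\rho\sqrt{aC}}{\sqrt{\tau}(\gamma-1)}$; this holds precisely because of the $\max$-structure of $C$ together with $a \geq 2$ (the term $a^{3}\rho^{2}\tau^{-1}(\gamma-1)^{-2}$ absorbs the square root once $a^{2} \geq 4$).

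Step 3 (items (ii) and (iii)). Item (ii) is immediate from Step~1 and the now-proved (i): $\omega(\bx')-\omega(\bx^{k}) \leq \tfrac{\rho}{\sqrt{\tau}}\sqrt{\Delta_k} \leq \tfrac{\rho\sqrt{aC}}{\sqrt{\tau}(k+1)} \leq \tfrac{C}{a^{2}(k+1)}$, the last step using $C \geq a^{5}\rho^{2}\tau^{-1}(\gamma-1)^{-2} \geq a^{5}\rho^{2}/\tau$. For item (iii), I would drop the nonnegative term $A\Delta_k$ on the left of the rewritten Proposition~\ref{P:SimIne}, bound the $\eta_{s}$-sum exactly as in Step~2 (with (i) fully available), obtaining $\tfrac{(k+a-1)^{2-\gamma}}{a^{2}}\bigl(\omega(\bx^{k})-\omega(\bx')\bigr) \leq R \leq C/a^{2}$; multiplying by $a^{2}(k+a-1)^{\gamma-2}$ and using $(k+a-1)^{2-\gamma} \geq (k+1)^{2-\gamma}$ yields the claimed rate $C/(k+1)^{2-\gamma}$.

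The principal obstacle is the self-referential nature of Step~2: the bound on $\Delta_k$ depends both on the earlier iterates (through the $\eta_{s}$-weighted sum) and on $\Delta_k$ itself (through the outer term, which the error bound controls only by $\sqrt{\Delta_k}$). This forces a quadratic, rather than linear, recursion, and the reason the induction closes with a clean $O(1/k^{2})$ constant is the carefully engineered form of $C$ and the standing assumption $a \geq 2$. Verifying these constant inequalities, rather than any single conceptual step, is where the genuine care is required.
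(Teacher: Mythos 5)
Your proposal is correct and follows essentially the same route as the paper's proof: the identical bridge inequality $\omega\left(\bx'\right) - \omega\left(\bx^{k}\right) \leq \left(\rho/\sqrt{\tau}\right)\sqrt{\varphi\left(\bx^{k}\right) - \varphi\left(\bx'\right)}$ via the projection onto $X^{\ast}$, the normal-cone optimality of $\bx'$ and the H\"olderian error bound, the same induction for item (i) driven by Proposition \ref{P:SimIne} with the $\eta_{s}$-weighted sum controlled through Lemmas \ref{L:boundeta} and \ref{L:TechSum}, and the same derivations of (ii) and (iii). The only immaterial difference is how the self-referential inequality $A\Phi_{k} - E\sqrt{\Phi_{k}} - R \leq 0$ is closed: you evaluate the upward parabola at the target value $\sqrt{aC}/\left(k+1\right)$ and compare roots, whereas the paper splits into the two cases $\Phi_{k} - \left(\rho\alpha_{k-1}/\sqrt{\tau}\right)\sqrt{\Phi_{k}} \geq \Phi_{k}/a$ and its converse; your constant verification is consistent with the definition of $C$ and $a \geq 2$.
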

    \begin{proof}
        For the simplicity of the proof, we denote $\Phi_{k} = \varphi\left(\bx^{k}\right) - \varphi\left(\bx'\right)$ for all $k \in \nn$. We will prove the first item using an induction argument on $k \in \nn$. For $k = 1$, we obtain from Proposition \ref{P:MainIne} that (recall that $t_{0} = 1$)
        \begin{equation*}
			\Phi_{1} = t_{0}^{-1}\left(\varphi\left(\bx^{1}\right) - \varphi\left(\bx'\right)\right) \leq \frac{\beta}{2}\norm{\bx^{0} - \bx'}^{2} + \left(\omega\left(\bx'\right) - \omega^{\ast}\right)\alpha_{0}t_{0} \leq \frac{C}{2a^{2}} \leq \frac{aC}{4},
    	\end{equation*}
		where the second inequality follows from the definition of $C$ and the fact that $a_{0} = a^{-\gamma} < 1/2$ since $\gamma > 1$ and $a \geq 2$, and the last inequality also follows from the fact that $a \geq 2$. This proves the induction base with $k = 1$.
\medskip
		
		Let $k \in \nn$. We assume now that item (i) is true for all  $1 \leq s \leq k - 1$ and we will prove it for $s = k$. For all $1 \leq s \leq k - 1$, we have that
        \begin{equation} \label{T:SimuHolderOmegaBound}
        	\xi^{T}\left(\bx' - \bx^{s}\right) = \xi^{T}\left(\bx' - P_{X^{\ast}}\left(\bx^{s}\right)\right) + \xi^{T}\left(P_{X^{\ast}}\left(\bx^{s}\right) - \bx^s\right) \leq \xi^{T}\left(P_{X^{\ast}}\left(\bx^{s}\right) - \bx^s\right),
        \end{equation}
        where $P_{X^{\ast}}$ denotes the orthogonal projection onto the nonempty set $X^{\ast}$. It should be noted that the last inequality follows from the first-order optimality condition of the constrained minimization of $\omega$ over $X^{\ast}$, which reads (recall that $X' = \argmin \left\{\omega\left(\bx\right) : \, \bx \in X^{\ast} \right\}$):
		\begin{equation*}
			\bo \in \xi + \partial \delta_{X^{\ast}}(\bx') = \xi + N_{X^{\ast}}(\bx'),
		\end{equation*}		
        where $\xi \in \partial \omega\left(\bx'\right)$, and hence with the definition of the normal cone $N_{X^{\ast}}(\cdot)$ it follows that $\xi^{T}\left(\bx' - \bu\right) \leq 0$ for all $\bu \in X^{\ast}$. Therefore, since $P_{X^{\ast}}\left(\bx^{s}\right) \in X^{\ast}$, one has $\xi^{T}\left(\bx' - P_{X^{\ast}}\left(\bx^{s}\right)\right) \leq 0$. Now, using the H\"olderian error bound of the function $\varphi$ (see Assumption \ref{A:AssumpB}(i)) yields
        \begin{equation} \label{T:SimulHolder:GradientBound}
        	\xi^{T}\left(P_{X^{\ast}}\left(\bx^{s}\right) - \bx^s\right) \leq \norm{\xi} \cdot \norm{P_{X^{\ast}}\left(\bx^{s}\right) - \bx^s} = \rho\cdot\dist\left(\bx^{s} , X^{\ast}\right) \leq \frac{\rho}{\sqrt{\tau}}\sqrt{\Phi_{s}},
        \end{equation}
        where the equality follows from the facts that $\norm{\xi} = \rho$ and the definition of the distance function. Hence, using the gradient inequality on the convex function $\omega$, we obtain that (recall that $\xi \in \partial \omega\left(\bx'\right)$)
        \begin{equation} \label{T:SimulHolder:0}
            \omega\left(\bx'\right) - \omega\left(\bx^{s}\right) \leq \xi^{T}\left(\bx' - \bx^{s}\right) \leq \xi^{T}\left(P_{X^{\ast}}\left(\bx^{s}\right) - \bx^s\right) \le \frac{\rho}{\sqrt{\tau}}\sqrt{\Phi_{s}},
        \end{equation}
        where the second inequality follows from \eqref{T:SimuHolderOmegaBound} and the last inequality follows from \eqref{T:SimulHolder:GradientBound}. Using the induction assumption on $\Phi_{s}$ for all $1 \leq s \leq k - 1$, we have thus obtained
        \begin{equation} \label{T:SimulHolder:1}
        	\omega\left(\bx'\right) - \omega\left(\bx^{s}\right) \leq \frac{\rho}{\sqrt{\tau}}\sqrt{\Phi_{s}} \leq \frac{\rho\sqrt{a}\sqrt{C}}{\sqrt{\tau}}\left(s + 1\right)^{-1}.
        \end{equation}
       Therefore, using \eqref{Etaproperty} and \eqref{T:SimulHolder:1}, we obtain that (recall that $\eta_{0} = 0$)
        \begin{equation*}
        	\sum_{s = 0}^{k - 1} \eta_{s}\left(\omega\left(\bx'\right) - \omega\left(\bx^{s}\right)\right) = \sum_{s = 1}^{k - 1} \eta_{s}\left(\omega\left(\bx'\right) - \omega\left(\bx^{s}\right)\right) \leq \frac{\rho\sqrt{a}\sqrt{C}}{2\sqrt{\tau}}\sum\limits_{s = 1}^{k - 1} \left(s + 1\right)^{-\gamma} = \frac{\rho\sqrt{a}\sqrt{C}}{2\sqrt{\tau}}\sum\limits_{s = 2}^{k} s^{-\gamma}.
        \end{equation*}
		Using Lemma \ref{L:TechSum} with $n_{1} = 2$, $n_{2} = k$ and $r = \gamma > 1$, we have
        \begin{equation} \label{T:SimulHolder:3}
        	\sum_{s = 0}^{k - 1} \eta_{s}\left(\omega\left(\bx'\right) - \omega\left(\bx^{s}\right)\right) \leq \frac{\rho\sqrt{a}\sqrt{C}}{2\sqrt{\tau}}\sum_{s = 2}^{k} s^{-\gamma} = \frac{\rho\sqrt{a}\sqrt{C}}{2\sqrt{\tau}} \cdot\frac{1^{1 - \gamma} - \left(k - 1\right)^{1 - \gamma}}{\gamma - 1} \leq \frac{\rho\sqrt{a}\sqrt{C}}{2\sqrt{\tau}\left(\gamma - 1\right)}.
        \end{equation}
        From Proposition \ref{P:SimIne} and \eqref{T:SimulHolder:3}, we get that
    	\begin{equation*}
 		  	t_{k - 1}^{2}\left(F_{k - 1}\left(\bx^{k}\right) - F_{k - 1}\left(\bx'\right)\right) \leq \frac{\beta}{2}\norm{\bx^{0} - \bx'}^{2} - \sum_{s = 0}^{k - 1} \eta_{s}\left(\omega\left(\bx^{s}\right) - \omega\left(\bx'\right)\right) \leq \frac{\beta}{2}\norm{\bx^{0} - \bx'}^{2} + \frac{\rho\sqrt{a}\sqrt{C}}{2\sqrt{\tau}\left(\gamma - 1\right)}.
    	\end{equation*}
		Hence, from the definition of $C$, we get
    	\begin{equation} \label{T:SimulHolder:4}
 		  	t_{k - 1}^{2}\left(F_{k - 1}\left(\bx^{k}\right) - F_{k - 1}\left(\bx'\right)\right) \leq \frac{C}{2a^{2}} + \frac{\sqrt{C}}{2} \cdot \frac{\sqrt{C}}{a^{2}} = \frac{C}{a^{2}}.
    	\end{equation}
    	Moreover, using \eqref{T:SimulHolder:0} with $s = k$, we have that
    	\begin{equation*}
    		\omega\left(\bx^{k}\right) - \omega\left(\bx'\right) \geq -\frac{\rho}{\sqrt{\tau}}\sqrt{\Phi_{k}}.
    	\end{equation*}
    	Using this, the definition of $F_{k - 1} \equiv \varphi + \alpha_{k - 1}\omega$ and \eqref{T:SimulHolder:4} yields that
    	\begin{align}
 		  	\Phi_{k} - \frac{\rho\alpha_{k - 1}}{\sqrt{\tau}}\sqrt{\Phi_{k}} & \leq \Phi_{k} + \alpha_{k - 1}\left(\omega\left(\bx^{k}\right) - \omega\left(\bx'\right)\right) = F_{k - 1}\left(\bx^{k}\right) - F_{k - 1}\left(\bx'\right) \leq \frac{Ct_{k - 1}^{-2}}{a^{2}} \leq \frac{C}{\left(k + 1\right)^{2}}, \label{T:SimulHolder:5}
    	\end{align}
    	where the last inequality follows from the fact that $t_{k - 1}^{-1} = a/\left(k + a - 1\right) \leq a/\left(k + 1\right)$ since $a \geq 2$. Now, we will prove the induction step by splitting the proof into two cases. First, we assume that
    	\begin{equation*}
 		  	\frac{\Phi_{k}}{a} \leq \Phi_{k} - \frac{\rho\alpha_{k - 1}}{\sqrt{\tau}}\sqrt{\Phi_{k}}.
    	\end{equation*}
    	Using \eqref{T:SimulHolder:5} yields that
    	\begin{equation*}
 		  	\frac{\Phi_{k}}{a} \leq \Phi_{k} - \frac{\rho\alpha_{k - 1}}{\sqrt{\tau}}\sqrt{\Phi_{k}} \leq \frac{C}{\left(k + 1\right)^{2}},
    	\end{equation*}
    	and therefore we obviously have that $\Phi_{k} \leq aC/\left(k +  1\right)^{2}$, as required. Second, we assume the converse, that is
    	\begin{equation*}
 		  	\Phi_{k} - \frac{\rho\alpha_{k - 1}}{\sqrt{\tau}}\sqrt{\Phi_{k}} < \frac{\Phi_{k}}{a}.
    	\end{equation*}
    	Therefore, by rearranging the inequality we get that $\Phi_{k} (a-1)/a < \left(\rho\alpha_{k - 1}/\sqrt{\tau}\right)\sqrt{\Phi_{k}}$. Dividing both sides by $\left(a - 1\right)\sqrt{\Phi_{k}}/a$ and then squaring both sides yields
    	\begin{equation*}
    		\Phi_{k} < \frac{a^{2}\rho^{2}}{\tau(a-1)^{2}}\alpha_{k - 1}^{2} \leq \frac{a{^2}\rho^{2}}{\tau(a-1)^{2}}\left(k + 1\right)^{-2} \le \frac{a^{2}\rho^{2}}{\tau\left(\gamma - 1\right)^{2}}\left(k + 1\right)^{-2} \leq \frac{C}{a^{3}\left(k + 1\right)^{2}},
    	\end{equation*}
    	where the second inequality follows from the definition of $\alpha_{k - 1} = \left(k + a - 1\right)^{-\gamma} < \left(k + 1\right)^{-1}$ since $\gamma > 1$ and $a \geq 2$, the third inequality follows from the fact that $0 < \gamma - 1 < a - 1$ since $a \geq 2$ and $1 < \gamma < 2$ while the last inequality follows again from the definition of $C$. Thus, we obtain that $\Phi_{k} \leq aC/\left(k + 1\right)^{2}$ since $a \geq 2$, which proves the desired result.
\medskip
    	
		Now, we will prove the second item. Using \eqref{T:SimulHolder:1} with $s = k$, yields
        \begin{equation*}
        	\omega\left(\bx'\right) - \omega\left(\bx^{k}\right) \leq \frac{\rho\sqrt{a}\sqrt{C}}{\sqrt{\tau}}\left(k + 1\right)^{-1} \leq \frac{\rho\sqrt{a}\sqrt{C}}{\sqrt{\tau}\left(\gamma - 1\right)}\left(k + 1\right)^{-1} \leq \frac{C}{a^{2}}\left(k + 1\right)^{-1},
        \end{equation*}
		where the second inequality follows from the fact that $1 < \left(\gamma - 1\right)^{-1}$ since $1 < \gamma < 2$ and the last inequality follows from the definition of $C$. For proving the last item, using \eqref{T:SimulHolder:4}, we have that
    	\begin{equation*}
 		  	\alpha_{k - 1}\left(\omega\left(\bx^{k}\right) - \omega\left(\bx'\right)\right) \leq \Phi_{k} + \alpha_{k - 1}\left(\omega\left(\bx^{k}\right) - \omega\left(\bx'\right)\right) = F_{k - 1}\left(\bx^{k}\right) - F_{k - 1}\left(\bx'\right) \leq \frac{Ct_{k - 1}^{-2}}{a^{2}},
    	\end{equation*}
    	where the first inequality follows from the fact that $\Phi_{k} \geq 0$ and the equality follows from the definition of $F_{k - 1} \equiv \varphi + \alpha_{k - 1}\omega$. Hence, immediately from the definitions of $\alpha_{k}$ and $t_{k}$ (see \eqref{Bi-AGUpdate:0} and \eqref{D:Tk}, respectively) we obtain that
    	\begin{equation*}
 		  	\omega\left(\bx^{k}\right) - \omega\left(\bx'\right) \leq \frac{Ct_{k - 1}^{-2}}{a^{2}\alpha_{k - 1}} = \frac{C}{\left(k + a - 1\right)^{2 - \gamma}} \leq \frac{C}{\left(k + 1\right)^{2 - \gamma}},
    	\end{equation*}
        where the last inequality follows from the fact that $a - 1 \geq 1$ since $a \geq 2$. This proves the desired result.
	\end{proof}
	Before concluding this section, we note that, as explained in Remark \ref{R:Lift}, applying FBi-PG to the lifted problem avoids the need to compute the proximal mapping of the sum of two functions. In the following remark, we record the  effect on the convergence rates using the lifted problem.
	\begin{remark}
		Following Remark \ref{R:Lift}, applying any of our convergence rate results to the lifted inner objective $\bar{\varphi}$ yields the bound ${\bar \varphi}\left(\bw^{k}\right) - {\bar \varphi}\left(\bw'\right) \leq {\cal R}_{k}^{I}$, where $\mathcal{R}_k^I$ denotes any of the upper bound (which depends of the choice of $\gamma$) as derived in Theorems~\ref{T:FastRateIn}, \ref{T:RateSimul}, and Proposition~\ref{P:RateIn}. As a consequence, recalling that ${\bar \varphi}\left(\bw'\right) = \varphi\left(\bx' \right)$, it is immediate to deduce that
				\begin{equation*}
					\varphi\left(\bx^{k}\right)- \varphi\left(\bx'\right) \leq {\cal R}_{k}^{I} \quad \text{and} \quad \norm{\bx^{k} - \bz^{k}} \leq \sqrt{{\cal R}_{k}^{I}}.
    			\end{equation*}
				This shows that the lifting approach allows us to completely circumvent the need for computing the proximal mapping of the composite term $\sigma + \psi$, while still retaining the original convergence rates for the inner problem. The situation for the outer objective is more subtle. While at optimality we have $\bar{\omega}\left(\bw’\right) = \omega\left(\bx’\right)$ (since $\bx’ = \bz’$), this equality does not hold in general. This can be circumvented by assuming that $\psi$ is $L_\psi$-Lipschitz continuous. Applying our convergence rate results to the lifted objective $\bar{\omega}$ yields the bound ${\bar \omega}\left(\bw^{k}\right) - {\bar \omega}\left(\bw'\right) \leq {\cal R}_{k}^{O}$, where $\mathcal{R}_k^O$ depends on $\gamma$ as established in Theorems~\ref{T:RateOut} and \ref{T:RateSimul}. It is then easy to show that the outer rate for the original outer objective function satisfies:
				\begin{equation*}
					\omega\left(\bx^{k}\right) - \omega\left(\bx'\right) \leq {\cal R}_{k}^{O} + L_{\psi}\norm{\bx^{k} - \bz^{k}} \leq {\cal R}_{k}^{O} + L_{\psi}\sqrt{{\cal R}_{k}^{I}}.
				\end{equation*}
In summary, by adopting the lifting approach, we eliminate the computational bottleneck associated with evaluating the proximal mapping of a composite term, while preserving the inner convergence rates derived in our analysis. For the outer problem, we must pay a price, which is expressed in terms of an additional Lipschitz continuity assumption, and a  potentially weaker rate, due to the added Lipschitz error term.
	\end{remark}
	
\section{Pointwise Convergence Analysis} \label{Sec:ConvAnal}
	This section is devoted for proving that the sequence $\Seq{\bx}{k}$ generated by FBi-PG converges to an optimal solution of the bi-level optimization problem \eqref{Prob:OP}. This result is achieved under the Assumption \ref{A:AssumpB}, meaning in the setting where the inner objective function $\varphi\left(\cdot\right)$ satisfies the H\"olderian error bound property. Moreover, throughout this section we assume that the sequences $\seq{\alpha}{k}$ and $\seq{t}{k}$ are defined with $1 < \gamma < 2$ and $a > 2$.
\medskip

	As already mentioned, each iteration of FBi-PG can be seen as an iteration of FISTA applied to the regularized function $F_{k}$. Unfortunately, due to the dynamic nature of the regularizing sequence $\seq{\alpha}{k}$, much like we could not apply the convergence rate results of FISTA to get corresponding fast rate for FBi-PG, we also cannot directly apply the existing pointwise convergence results of FISTA established in \cite{CD2015} to derive the corresponding convergence of the sequence $\Seq{\bx}{k}$ generated by FBi-PG to an optimal solution of the bi-level optimization problem \eqref{Prob:OP}. Nevertheless, inspired by the proof techniques developed in \cite{CD2015}, below we show that within a similar line of proof,  we can adapt and extend  their result for handling the bi-level setting.
\medskip

	Throughout the rest of this section, we will define for all $k \in \nn$ and $\bx' \in X'$ the following two quantities
	\begin{equation} \label{DefDeltaK}
    	\delta_{k} = \frac{1}{2}\norm{\bx^{k} - \bx^{k - 1}}^{2} \quad \text{and} \quad \mu_{k} = \frac{1}{2}\norm{\bx^{k} - \bx'}^{2}.
	\end{equation}
	The main tasks of the forthcoming analysis will be to establish that both sequences $\seq{\delta}{k}$ and $\seq{\mu}{k}$ converge, from which the culminating main convergence result will follow by standard arguments.
\medskip

	We begin with a technical lemma (see appendix for the proof) that will be useful in proving the main result.
	\begin{lemma} \label{L:SumDelta}
    	Let $\Seq{\bx}{k}$ be a sequence generated by FBi-PG with $1 < \gamma < 2$ and $a > 2$. Then, there exists $M > 0$ such that $\sum_{s = 0}^{k - 1} t_{s - 1}\delta_{s} < M$ for all $k \in \nn$.
	\end{lemma}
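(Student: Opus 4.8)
The goal is to bound the partial sums $\sum_{s=0}^{k-1} t_{s-1}\delta_s$ uniformly in $k$, where $\delta_s = \tfrac12\norm{\bx^s - \bx^{s-1}}^2$. The natural source of such a bound is the basic proximal inequality in Proposition~\ref{P:BasicIne}, which already ``nearly telescopes.'' The plan is to extract from the FISTA-type descent a control on the successive displacements $\norm{\bx^{s} - \bx^{s-1}}^2$, weighted appropriately by the momentum coefficients, and then to show the accumulated weighted displacement stays bounded thanks to the fast inner rate and the summability of the regularization-induced error terms under $1 < \gamma < 2$.

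\textbf{Key steps.}
First I would return to the proof of Proposition~\ref{P:BasicIne} and Proposition~\ref{P:SimIne} and retain the displacement term that was discarded there. Recall that in Proposition~\ref{P:TechProx}, applied at step $k$, the term $\tfrac{\beta}{2}\norm{\bu - \bx^{k+1}}^2$ appears on the right-hand side with a negative sign; tracking the full three-point identity (rather than only the telescoped $\bz^k$-differences) produces an extra nonnegative quantity proportional to $\norm{\bx^{k+1} - \by^k}^2$ or, after using the definition of $\by^k$ in \eqref{Bi-AGUpdate:1}, to $t_{k-1}^2\norm{\bx^k - \bx^{k-1}}^2$. Thus a refined version of \eqref{P:SimIne:3} carries, on its right-hand side (with a negative sign), a term of order $t_{s-1}\delta_s$ or $t_{s-1}^2\delta_s$. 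Summing \eqref{P:SimIne:3} over $s = 0,\dots,k-1$ as in Proposition~\ref{P:SimIne}, these displacement terms accumulate into $\sum_{s=0}^{k-1} t_{s-1}\delta_s$ on one side, while the other side collapses telescopically to $\tfrac{\beta}{2}\norm{\bx^0 - \bx'}^2$ minus the nonnegative sums $\sum d_s(\varphi(\bx^s)-\varphi(\bx'))$ and $\sum \eta_s(\omega(\bx^s)-\omega(\bx'))$.

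Second, I would isolate $\sum_{s=0}^{k-1} t_{s-1}\delta_s$ and bound all remaining terms. The two critical terms to control are $t_{k-1}^2(F_{k-1}(\bx^k) - F_{k-1}(\bx'))$, which is nonnegative up to the outer error and already estimated in \eqref{T:SimulHolder:4} to be at most $C/a^2$ under the H\"olderian error bound, and the cross term $\sum_{s=0}^{k-1}\eta_s(\omega(\bx^s)-\omega(\bx'))$, whose absolute value is controlled by $\eqref{T:SimulHolder:3}$, namely bounded by $\rho\sqrt{a}\sqrt{C}/(2\sqrt{\tau}(\gamma-1))$. Since both of these bounds are uniform in $k$ precisely because $1 < \gamma < 2$ guarantees $\sum_s \eta_s (s+1)^{-\cdots}$ converges via Lemma~\ref{L:TechSum}, collecting them yields a constant $M > 0$, independent of $k$, that dominates $\sum_{s=0}^{k-1} t_{s-1}\delta_s$.

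\textbf{Main obstacle.}
The hard part will be the first step: correctly recovering and keeping the displacement term with the right power of $t_{k}$ so that it assembles into $\sum t_{s-1}\delta_s$ and not some other weighting. The subtlety is that the discarded term in Proposition~\ref{P:BasicIne} is $\tfrac{\beta}{2}\norm{\bz^{k+1}-\bx}^2$, not directly a displacement, so I would need the algebraic identity relating $\bz^{k+1} - \bz^k$ (or the gap created in passing from $\bu = (1-t_k^{-1})\bx^k + t_k^{-1}\bx$ to $\bx^{k+1}$) back to $\norm{\bx^{k+1}-\bx^k}^2$, using $t_k^2 - t_k \le t_{k-1}^2$ and the explicit form $t_k = (k+a)/a$. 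Once this bookkeeping is done, the summation and the uniform bounds from the H\"olderian analysis make the conclusion routine. I therefore expect the proof to be deferred to the appendix, as the statement indicates, with the technical displacement identity being the genuine content.
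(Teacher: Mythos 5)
Your overall strategy --- extract a near-telescoping control on the weighted displacements from the FISTA inequality and then use the H\"olderian bounds to make everything uniform in $k$ --- is in the right spirit, but the concrete mechanism you propose for producing $\sum_{s} t_{s-1}\delta_{s}$ has a genuine gap, and it is not the route the paper takes. First, with step size exactly $1/\beta$ the proximal inequality of Proposition~\ref{P:TechProx} leaves no spare $\norm{\bx^{+}-\by}^{2}$ term to retain: the quantity $\frac{\beta}{2}\norm{\bu-\bx^{k+1}}^{2}$ you point to is precisely the term that becomes $\frac{\beta}{2t_{k}^{2}}\norm{\bz^{k+1}-\bx}^{2}$ and is already kept in \eqref{P:BasicIne:0}; the only term discarded in Proposition~\ref{P:SimIne} is $\frac{\beta}{2}\norm{\bz^{k}-\bx'}^{2}$, a distance to the optimum, not a displacement. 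Second, your identification of $\norm{\bx^{k+1}-\by^{k}}^{2}$ with $t_{k-1}^{2}\norm{\bx^{k}-\bx^{k-1}}^{2}$ is incorrect: from \eqref{Bi-AGUpdate:1} one has $\by^{k}-\bx^{k}=\lambda_{k}\left(\bx^{k}-\bx^{k-1}\right)$ with $\lambda_{k}=t_{k}^{-1}\left(t_{k-1}-1\right)\leq 1$, and $\bx^{k+1}-\by^{k}$ is not a multiple of $\bx^{k}-\bx^{k-1}$ at all. Moreover, any leftover retained inside the derivation of \eqref{P:BasicIne:0} would carry the weight $t_{k}^{2}\sim k^{2}/a^{2}$ after the multiplication by $t_{k}^{2}$, which is far stronger than the weight $t_{s-1}\sim s/a$ the lemma claims and beyond what the available estimates deliver.

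The paper's actual device is to evaluate the basic inequality \eqref{P:BasicIne:0} at $\bx=\bx^{k}$ (the previous iterate) rather than at $\bx'$. Then $\norm{\bz^{k+1}-\bx^{k}}^{2}=t_{k}^{2}\norm{\bx^{k+1}-\bx^{k}}^{2}$ and $\norm{\bz^{k}-\bx^{k}}^{2}=\left(t_{k-1}-1\right)^{2}\norm{\bx^{k}-\bx^{k-1}}^{2}$, and the gap $t_{k-1}^{2}-\left(t_{k-1}-1\right)^{2}=2t_{k-1}-1\geq t_{k-1}$ is exactly what produces the weight $t_{s-1}$ after summation, leaving $\beta\sum_{s}t_{s-1}\delta_{s-1}\leq\sum_{s}t_{s}^{2}\left(F_{s}\left(\bx^{s}\right)-F_{s}\left(\bx^{s+1}\right)\right)$. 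The remaining work, which your sketch does not address, is to bound this right-hand side: it is a sum of differences of $F_{s}$ at \emph{consecutive iterates}, not at an optimum, so the two bounds you cite (on $t_{k-1}^{2}\left(F_{k-1}\left(\bx^{k}\right)-F_{k-1}\left(\bx'\right)\right)$ and on $\sum_{s}\eta_{s}\left(\omega\left(\bx^{s}\right)-\omega\left(\bx'\right)\right)$) are not the terms that actually arise. The paper handles it by Abel summation, using $t_{s}^{2}-t_{s-1}^{2}\leq t_{s}\leq\frac{a}{a-2}d_{s}$ (this is where $a>2$ enters) together with the auxiliary summability results $\sum_{s}d_{s}\left(\varphi\left(\bx^{s}\right)-\varphi\left(\bx'\right)\right)<\infty$ (Lemma~\ref{L:SumPhi}) and the $\eta_{s}$-weighted outer bounds (Lemma~\ref{L:SumOmega}), both resting on Theorem~\ref{T:SimulHolder}. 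You correctly anticipated the role of the H\"olderian error bound and of $1<\gamma<2$, but without the substitution $\bx=\bx^{k}$ and the Abel-summation step the argument cannot be completed.
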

	For all $s \in \nn$, we define $\pi_{s,k} = 1$ for $k < s$ and otherwise we define
	\begin{equation}\label{DefBeta}
     	\pi_{s,k} := \prod_{j = s}^{k} \lambda_{j},
	\end{equation}
	where $\lambda_{k} := t_{k}^{-1}\left(t_{k - 1} - 1\right)$ for all $k \in \nn$.
\medskip

	The following bound is based on a result obtained in \cite{CD2015} with a minor modification to our setting.
	\begin{lemma}\label{L:SumTechnical}
		For all $s \geq 1$ we have that
		\begin{equation*}
			\sum_{k = 0}^{\infty} \pi_{s,k} \leq \frac{5a}{2}t_{s - 1}.
		\end{equation*}
	\end{lemma}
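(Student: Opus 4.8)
The plan is to evaluate the sum in closed form by exploiting the explicit expressions for $t_k$ and $\lambda_k$, and then to compare the resulting quantity with $\frac{5a}{2}t_{s-1}$. First I would substitute the choice $t_k = (k+a)/a$ from \eqref{D:Tk} into the definition $\lambda_k = t_k^{-1}(t_{k-1}-1)$, which after the elementary simplification $t_{k-1}-1 = (k-1)/a$ gives the clean formula $\lambda_k = (k-1)/(k+a)$ for every $k \geq 1$. I would then split the sum according to the empty-product convention \eqref{DefBeta}: the ``head'' terms with $0 \le k \le s-1$ each equal $1$ and contribute exactly $s$, while the substantive part is the tail $\sum_{k=s}^{\infty}\pi_{s,k}$ with $\pi_{s,k} = \prod_{j=s}^{k}\frac{j-1}{j+a}$.

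The heart of the argument is to sum this tail of products exactly. Writing the product as a ratio of factorials, one obtains for $s \ge 2$ the factorization $\pi_{s,k} = A_s B_k$, where $A_s = (s+a-1)!/(s-2)!$ depends only on $s$ and $B_k = (k-1)!/(k+a)! = [k(k+1)\cdots(k+a)]^{-1}$ depends only on $k$. I would then use the partial-fraction (telescoping) identity $B_k = \tfrac{1}{a}\bigl(g(k)-g(k+1)\bigr)$ with $g(k) = [k(k+1)\cdots(k+a-1)]^{-1}$, which is verified by factoring out the common middle terms and using $\tfrac1k - \tfrac1{k+a} = \tfrac{a}{k(k+a)}$. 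Since $g(k)\to 0$, the telescoping gives $\sum_{k=s}^{\infty}B_k = \tfrac1a g(s)$, and substituting $A_s$ and $g(s)$ collapses the tail to the simple value $\sum_{k=s}^{\infty}\pi_{s,k} = \frac{s-1}{a}$ (the case $s=1$ is immediate, since $\lambda_1=0$ forces every tail term to vanish, matching the same formula).

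Combining the two parts yields the exact identity $\sum_{k=0}^{\infty}\pi_{s,k} = s + \frac{s-1}{a}$, and it then remains to verify the claimed bound, which is routine: using $a \ge 2$ one has $\frac{s-1}{a} \le \frac{s-1}{2}$, so the left-hand side is at most $\frac{3s-1}{2}$, whereas $\frac{5a}{2}t_{s-1} = \frac{5(s-1+a)}{2} \ge \frac{5s+5}{2}$, and $\frac{3s-1}{2} \le \frac{5s+5}{2}$ holds for every $s \ge 1$.

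I expect the main obstacle to be the second step, namely recognizing and summing the infinite series of products; once the product is identified as the reciprocal of $a+1$ consecutive integers (up to the $s$-dependent constant $A_s$) and the telescoping identity for $B_k$ is in hand, the rest is bookkeeping. A cruder route, closer in spirit to the comparison argument of \cite{CD2015}, would instead bound $\pi_{s,k}$ by a power of $k$ and estimate the tail by an integral; this also suffices for the stated constant $\frac{5a}{2}$, but the exact telescoping above is both cleaner and sharper.
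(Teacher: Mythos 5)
Your proof is correct, and it takes a genuinely different route from the paper. The paper's proof is a two-line affair: it splits off the head terms ($k<s$, contributing $s$) exactly as you do, but then simply imports the tail bound $\sum_{k=s}^{\infty}\pi_{s,k}\leq (s+5)/2$ from \cite{CD2015}, arriving at $(3s+5)/2 < \tfrac{5a}{2}t_{s-1}$. You instead evaluate the tail \emph{exactly}: with $\lambda_j=(j-1)/(j+a)$ the product $\pi_{s,k}$ is, up to the $s$-dependent constant, the reciprocal of $a+1$ consecutive integers, and your telescoping identity $B_k=\tfrac1a\bigl(g(k)-g(k+1)\bigr)$ collapses the series to $\sum_{k=s}^{\infty}\pi_{s,k}=(s-1)/a$ (I verified this, e.g.\ for $a=2$, $s=2$ it gives $1/2$, matching $6\sum_{k\geq 2}[k(k+1)(k+2)]^{-1}$). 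This yields the exact value $s+(s-1)/a\leq 2s-1$, which is strictly sharper than the paper's $(3s+5)/2$, so the claimed bound follows with room to spare. What your approach buys is a self-contained and sharper argument that does not lean on an external reference; what the paper's approach buys is brevity. Two minor points worth keeping in your write-up: the factorial form $A_s=(s+a-1)!/(s-2)!$ only makes sense for $s\geq 2$, so the separate treatment of $s=1$ (where $\lambda_1=0$ kills the whole tail) is genuinely needed, and the standing hypothesis of Section \ref{Sec:ConvAnal} is $a>2$, so your use of $a\geq 2$ is available.
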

	\begin{proof}
		Let $s \geq 1$. Since $\pi_{s,k} = 1$ for all $k < s$ we obtain that
		\begin{equation*}
			\sum_{k = 0}^{\infty} \pi_{s,k} = \sum_{k = 0}^{s - 1} \pi_{s,k} + \sum_{k = s}^{\infty} \pi_{s,k} = \sum_{k = 0}^{s - 1} 1 + \sum_{k = s}^{\infty} \pi_{s,k} = s + \sum_{k = s}^{\infty} \pi_{s,k} \leq s + \frac{s + 5}{2} = \frac{3s + 5}{2},
		\end{equation*}
		where the inequality follows from \cite{CD2015}. Now, since $a - 1 > 1$ we have that $3s + 5 < 5\left(s + a - 1\right) = 5at_{s - 1}$ where the last equality follows from the definition of $t_{s}$ (see \eqref{D:Tk}). This completes the proof.
	\end{proof}
 	The next theorem proves that the FBi-PG's sequence, $\Seq{\bx}{k}$, is convergent to an optimal solution of the bi-level problem \eqref{Prob:OP}.
	\begin{theorem}[Pointwise convergence under H\"olderian EB for $1 < \gamma < 2$] \label{T:Convergence}
		Let $\Seq{\bx}{k}$ be a sequence generated by FBi-PG with $1 < \gamma < 2$ and $a > 2$. Then, the sequence $\Seq{\bx}{k}$ converges to some $\bx' \in X'$.
	\end{theorem}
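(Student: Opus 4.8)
The plan is to deduce pointwise convergence from Opial's lemma: it suffices to show (a) that for every fixed $\bx' \in X'$ the sequence $\seq{\mu}{k}$ defined in \eqref{DefDeltaK} converges, and (b) that every limit point of the (necessarily bounded) sequence $\Seq{\bx}{k}$ belongs to $X'$. Since we work in finite dimensions, (a) gives that $\norm{\bx^{k} - \bx'}$ converges for each $\bx' \in X'$, and together with (b) Opial's lemma forces the whole sequence to converge to a single point of $X'$.

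To obtain (a), I would first derive a quasi-F\'ejer--type recursion for $\seq{\mu}{k}$. Applying the proximal inequality of Proposition~\ref{P:TechProx} with $s = f_{k}$, $q = g_{k}$, $\by = \by^{k}$, $\bx^{+} = \bx^{k+1}$, $L = \beta$ and $\bu = \bx' \in X'$ yields $F_{k}\left(\bx^{k+1}\right) - F_{k}\left(\bx'\right) \leq \frac{\beta}{2}\left(\norm{\by^{k} - \bx'}^{2} - \norm{\bx^{k+1} - \bx'}^{2}\right)$. Expanding $\by^{k} - \bx' = \left(\bx^{k} - \bx'\right) + \lambda_{k}\left(\bx^{k} - \bx^{k-1}\right)$ with $\lambda_{k} = t_{k}^{-1}\left(t_{k-1} - 1\right) \in [0,1)$ and using the identity $\langle \bx^{k} - \bx' , \bx^{k} - \bx^{k-1}\rangle = \mu_{k} - \mu_{k-1} + \delta_{k}$, this rearranges into
\begin{equation*}
\mu_{k+1} - \mu_{k} \leq \lambda_{k}\left(\mu_{k} - \mu_{k-1}\right) + \lambda_{k}\left(1 + \lambda_{k}\right)\delta_{k} - \frac{1}{\beta}\left(F_{k}\left(\bx^{k+1}\right) - F_{k}\left(\bx'\right)\right).
\end{equation*}
Here lies the main obstacle, and the place where the bi-level setting genuinely departs from the classical FISTA analysis of \cite{CD2015}: the term $F_{k}\left(\bx^{k+1}\right) - F_{k}\left(\bx'\right) = \left(\varphi\left(\bx^{k+1}\right) - \varphi\left(\bx'\right)\right) + \alpha_{k}\left(\omega\left(\bx^{k+1}\right) - \omega\left(\bx'\right)\right)$ is no longer sign-definite. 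The inner gap $\varphi\left(\bx^{k+1}\right) - \varphi\left(\bx'\right) \geq 0$ can be discarded (it only helps), but the outer contribution may be negative. This is exactly where Assumption~\ref{A:AssumpB} and the restriction $1 < \gamma < 2$ become essential: the improved outer bound of Theorem~\ref{T:SimulHolder}(ii) gives $\omega\left(\bx'\right) - \omega\left(\bx^{k+1}\right) \leq C/\left(a^{2}\left(k+2\right)\right)$, so the negative part is controlled by $\epsilon_{k} := \alpha_{k} C / \left(\beta a^{2}\left(k+2\right)\right)$, and the recursion reduces to $\mu_{k+1} - \mu_{k} \leq \lambda_{k}\left(\mu_{k} - \mu_{k-1}\right) + \xi_{k}$ with $\xi_{k} := \lambda_{k}\left(1 + \lambda_{k}\right)\delta_{k} + \epsilon_{k}$.

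With this recursion in hand, I would pass to the positive part $\theta_{k} := \left[\mu_{k} - \mu_{k-1}\right]_{+}$, which satisfies $\theta_{k+1} \leq \lambda_{k}\theta_{k} + \xi_{k}$; unrolling this and summing over $k$ while swapping the order of summation, the products $\pi_{s,k}$ of \eqref{DefBeta} appear and are bounded via Lemma~\ref{L:SumTechnical} by $\tfrac{5a}{2}t_{s-1}$. The remaining task is then to verify $\sum_{s} t_{s}\xi_{s} < \infty$: since $\lambda_{s}\left(1 + \lambda_{s}\right) < 2$, the first part is dominated by $\sum_{s} t_{s}\delta_{s}$, finite by Lemma~\ref{L:SumDelta} (using $t_{s} \sim t_{s-1}$), while $t_{s}\epsilon_{s} \sim \left(s + a\right)^{-\gamma}$ is summable precisely because $\gamma > 1$. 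Hence $\sum_{k} \theta_{k} < \infty$, and since $\mu_{k} \geq 0$, the sequence $\mu_{k} - \sum_{j \leq k}\theta_{j}$ is non-increasing and bounded below, so $\seq{\mu}{k}$ converges; this establishes (a). As a byproduct, Lemma~\ref{L:SumDelta} and $t_{s-1} \to \infty$ give $t_{s-1}\delta_{s} \to 0$, hence $\norm{\bx^{k} - \bx^{k-1}} \to 0$.

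Finally, for (b) let $\bbx$ be any limit point, $\bx^{k_{j}} \to \bbx$. Lower semicontinuity of $\varphi = f + g$ together with the inner rate $\varphi\left(\bx^{k}\right) - \varphi\left(\bx'\right) \to 0$ (Theorem~\ref{T:SimulHolder}(i)) forces $\varphi\left(\bbx\right) \leq \varphi\left(\bx'\right) = \varphi^{\ast}$, so $\bbx \in X^{\ast}$. Likewise, combining Theorem~\ref{T:SimulHolder}(ii)--(iii) gives $\omega\left(\bx^{k}\right) \to \omega\left(\bx'\right)$ (the two bounds vanish as $k \to \infty$ since $0 < 2 - \gamma$ and $\gamma > 1$), and lower semicontinuity of $\omega$ yields $\omega\left(\bbx\right) \leq \omega\left(\bx'\right)$; as $\bbx \in X^{\ast}$ and $\bx'$ minimizes $\omega$ over $X^{\ast}$, we conclude $\omega\left(\bbx\right) = \omega\left(\bx'\right)$, i.e.\ $\bbx \in X'$. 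With (a) and (b) verified, Opial's lemma delivers convergence of $\Seq{\bx}{k}$ to a single point of $X'$, completing the proof.
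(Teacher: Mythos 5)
Your proposal is correct and follows the same overall architecture as the paper's proof: a quasi-Fej\'er recursion for $\mu_{k}$ with an $O(k^{-1-\gamma})$ perturbation, passage to the positive parts $\theta_{k}=[\mu_{k}-\mu_{k-1}]_{+}$, unrolling via the products $\pi_{s,k}$ bounded through Lemma~\ref{L:SumTechnical}, summability of $t_{s-1}\delta_{s}$ from Lemma~\ref{L:SumDelta}, convergence of $\mu_{k}$, and an Opial-type endgame identifying the limit points via Theorem~\ref{T:SimulHolder} and lower semicontinuity. The one genuine point of departure is how the recursion and its perturbation are obtained. The paper works from the optimality condition of the proximal step, the monotonicity of $\partial g_{k}$, and the $(1/\beta)$-cocoercivity of $\nabla f_{k}$, which yields a perturbation $\xi^{k+1}$ expressed through a fixed subgradient of $\omega$ at $\bx'$; this is then bounded by re-running the H\"olderian error bound argument against the inner rate of Theorem~\ref{T:SimulHolder}(i). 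You instead apply the three-point inequality of Proposition~\ref{P:TechProx} directly at $\bu=\bx'$, obtaining the perturbation $-\beta^{-1}\left(F_{k}\left(\bx^{k+1}\right)-F_{k}\left(\bx'\right)\right)$, discarding the nonnegative inner gap, and controlling the possibly negative outer contribution immediately by Theorem~\ref{T:SimulHolder}(ii). The two bounds are the same estimate in disguise (part (ii) of that theorem is itself proved from the subgradient inequality plus the error bound), so nothing is gained or lost quantitatively, but your route is slightly more economical in that it recycles already-proved results rather than re-deriving the cocoercivity and subgradient estimates. Two cosmetic caveats: your claim $\lambda_{k}\in[0,1)$ fails at $k=0$ (where $\lambda_{0}=-1$), which is harmless only because $\bx^{0}=\bx^{-1}$ forces $\delta_{0}=0$ and the relevant sums start at $k\geq 1$ (the paper handles this by starting the unrolling at $k=2$ and noting $\lambda_{1}=0$); and the convergence of $\mu_{k}$ is established for $\bx'\in\idom{\omega}\cap X'$, so the Opial step should be phrased as in the paper, using that $\|\bx^{k}-\tilde{\bx}\|$ converges for the particular limit point $\tilde{\bx}$.
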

    \begin{proof}
		Let $\bx' \in X'$. Since $X' \subset X^{\ast}$ it follows that $\bo \in \partial \varphi\left(\bx'\right) = \nabla f\left(\bx'\right) + \partial g\left(\bx'\right)$. Therefore, $-\nabla f\left(\bx'\right) \in \partial g\left(\bx'\right)$. Taking $\bv' \in \partial \psi\left(\bx'\right)$, it follows that
		\begin{equation}
			-\nabla f\left(\bx'\right) + \alpha_{k}\bv' \in \partial g\left(\bx'\right) + \alpha_{k}\partial \psi\left(\bx'\right) \subset \partial \left(g + \alpha_{k}\psi\right)\left(\bx'\right) = \partial g_{k}\left(\bx'\right),
		\end{equation}
		where the inclusion follows from standard convex subdifferential calculus \cite{BC2011-B} and the last equality follows from the definition of $g_{k} \equiv g + \alpha_{k}\psi$. Writing the optimality condition of the proximal step \eqref{Bi-AGUpdate:2} yields that
		\begin{equation*}
			\beta\left(\by^{k} - \bx^{k + 1}\right) - \nabla f_{k}\left(\by^{k}\right) \in \partial g_{k}\left(\bx^{k + 1}\right).
		\end{equation*}
		Now, since the function $g_{k}$ is convex it follows that its subdifferential $\partial g_{k}$ is a monotone mapping \cite{BC2011-B}. Therefore, we have that
		\begin{align}
			0 & \leq \act{\beta\left(\by^{k} - \bx^{k + 1}\right) - \nabla f_{k}\left(\by^{k}\right) - \left(-\nabla f\left(\bx'\right) + \alpha_{k}\bv'\right) , \bx^{k + 1} - \bx'} \nonumber \\
			& = \act{\beta\left(\by^{k} - \bx^{k + 1}\right) - \nabla f_{k}\left(\by^{k}\right) + \nabla f_{k}\left(\bx'\right) - \alpha_{k}\left(\nabla \sigma\left(\bx'\right) + \bv'\right) , \bx^{k + 1} - \bx'} \nonumber \\
			& = \beta\act{\by^{k} - \bx^{k + 1} , \bx^{k + 1} - \bx'} + \act{\nabla f_{k}\left(\bx'\right) - \nabla f_{k}\left(\by^{k}\right) , \bx^{k + 1} - \bx'} + \alpha_{k}\act{\nabla \sigma\left(\bx'\right) + \bv' , \bx' - \bx^{k + 1}}, \label{T:Convergence:0}
		\end{align}
		where the first equality follows from the fact that $f_{k} \equiv f + \alpha_{k}\sigma$. Since $f_{k}$ is convex and has a Lipschitz continuous gradient with a parameter $\beta$ (see Remark \ref{R:Lipfk}), the later is also $(1/\beta)$-co-coercive mapping and hence we have
		\begin{align}
 			\act{\nabla f_{k}\left(\bx'\right) - \nabla f_{k}\left(\by^{k}\right) , \bx^{k + 1} - \bx'} & = \act{\nabla f_{k}\left(\bx'\right) - \nabla f_{k}\left(\by^{k}\right) , \bx^{k + 1} - \by^{k} + \by^{k} - \bx'} \nonumber \\
& \leq \act{\nabla f_{k}\left(\bx'\right) - \nabla f_{k}\left(\by^{k}\right) , \bx^{k + 1} - \by^{k}} - \frac{1}{\beta}\norm{\nabla f_{k}\left(\bx'\right) - \nabla f_{k}\left(\by^{k}\right)}^{2} \nonumber \\
& \leq \frac{1}{\beta}\norm{\nabla f_{k}\left(\bx'\right) - \nabla f_{k}\left(\by^{k}\right)}^{2} + \frac{\beta}{4}\norm{\bx^{k + 1} - \by^{k}}^{2} \nonumber \\
& - \frac{1}{\beta}\norm{\nabla f_{k}\left(\bx'\right) - \nabla f_{k}\left(\by^{k}\right)}^{2} \nonumber \\
& = \frac{\beta}{4}\norm{\bx^{k + 1} - \by^{k}}^{2}, \label{T:Convergence:1}	
		\end{align}
		where the last inequality follows from the fact that $\bu^{T}\bw \leq \norm{\bu}^{2}/\beta + \beta\norm{\bw}^{2}/4$ for all $\bu , \bw \in \real^{n}$ and $\beta > 0$. Combining \eqref{T:Convergence:0} and \eqref{T:Convergence:1} yields after dividing both sides by $\beta$ that
		\begin{equation} \label{T:Convergence:2}
			0 \leq \act{\by^{k} - \bx^{k + 1} , \bx^{k + 1} - \bx'} + \frac{1}{4}\norm{\bx^{k + 1} - \by^{k}}^{2} + \xi^{k + 1},
		\end{equation}
		where $\xi^{k} \equiv \alpha_{k - 1}\act{\nabla \sigma\left(\bx'\right) + \bv' , \bx' -  \bx^{k}}/\beta$ for all $k \in \nn$. Moreover, using the fundamental Pythagoras identity we obtain that
		\begin{align}
			\act{\by^{k} - \bx^{k + 1} , \bx^{k + 1} - \bx'} & = \act{\by^{k} - \bx^{k} , \bx^{k + 1} - \bx'} + \act{\bx^{k} - \bx^{k + 1} , \bx^{k + 1} - \bx'} \nonumber \\
			& = \act{\by^{k} - \bx^{k} , \bx^{k + 1} - \bx'} + \frac{1}{2}\left(\norm{\bx^{k} - \bx'}^{2} - \norm{\bx^{k + 1} - \bx'}^{2} - \norm{\bx^{k + 1} - \bx^{k}}^{2}\right) \nonumber \\
			& = \act{\by^{k} - \bx^{k} , \bx^{k + 1} - \bx'} + \mu_{k} - \mu_{k + 1} - \delta_{k + 1},  \label{T:Convergence:3}	
		\end{align}
		where the third equality follows from the definitions of $\delta_{k}$ and $\mu_{k}$ (see \eqref{DefDeltaK}). Since $\lambda_{k} = t_{k}^{-1}\left(t_{k - 1} - 1\right)$ we can write from \eqref{Bi-AGUpdate:1} that $\by^{k} - \bx^{k} = \lambda_{k}\left(\bx^{k } - \bx^{k - 1}\right)$. Using again the fundamental Pythagoras identity we obtain that
		\begin{align}
			\act{\by^{k} - \bx^{k} , \bx^{k + 1} - \bx'} & = \act{\by^{k} - \bx^{k} , \bx^{k + 1} - \bx^{k}} + \act{\by^{k} - \bx^{k} , \bx^{k} - \bx'} \nonumber \\
			& = \act{\by^{k} - \bx^{k} , \bx^{k + 1} - \bx^{k}} + \lambda_{k}\act{\bx^{k} - \bx^{k - 1} , \bx^{k} - \bx'} \nonumber \\
			& = \act{\by^{k} - \bx^{k} , \bx^{k + 1} - \bx^{k}} + \lambda_{k}\left(\mu_{k} - \mu_{k - 1} + \delta_{k}\right). \label{T:Convergence:4}	
		\end{align}
		Another application of the Pythagoras identity yields that
		\begin{align}
			\act{\by^{k} - \bx^{k} , \bx^{k + 1} - \bx^{k}} & = \frac{1}{2}\left(\norm{\by^{k} - \bx^{k}}^{2} - \norm{\bx^{k + 1} - \by^{k}}^{2} + \norm{\bx^{k + 1} - \bx^{k}}^{2}\right) \nonumber \\
			& = \lambda_{k}^{2}\delta_{k} - \frac{1}{2}\norm{\bx^{k + 1} - \by^{k}}^{2} + \delta_{k + 1}, \label{T:Convergence:5}
		\end{align}
		where the last equality uses the fact that $\by^{k} - \bx^{k} = \lambda_{k}\left(\bx^{k} - \bx^{k - 1}\right)$ and the definition of $\delta_{k}$. Combining \eqref{T:Convergence:2}, \eqref{T:Convergence:3}, \eqref{T:Convergence:4} and \eqref{T:Convergence:5} yields after eliminating $-\norm{\bx^{k + 1} - \by^{k}}^{2}/4 \leq 0$ that
		\begin{equation} \label{T:Convergence:6}
			\mu_{k + 1} - \mu_{k} \leq \lambda_{k}\left(\mu_{k} - \mu_{k - 1} + \delta_{k}\right) + \lambda_{k}^{2}\delta_{k} + \xi^{k + 1} = \lambda_{k}\left(\mu_{k} - \mu_{k - 1}\right) + \left(\lambda_{k}^{2} + \lambda_{k}\right)\delta_{k} + \xi^{k + 1}.
		\end{equation}
		Using the fact that $\lambda_{k} + \lambda_{k}^{2} \leq 2\lambda_{k}$ since $\lambda_{k}^{2} \leq \lambda_{k}$ (recall that $\lambda_{k} \leq 1$ for all $k \in \nn$), we obtain
		\begin{equation} \label{T:Convergence:7}
			\mu_{k + 1} - \mu_{k} \leq \lambda_{k}\left(\mu_{k} - \mu_{k - 1}\right) + 2\lambda_{k}\delta_{k} + \xi^{k + 1}.
		\end{equation}
		By defining $\theta_{k} = [\mu_{k} - \mu_{k - 1}]_{+} := \max \{ \bo , \mu_{k} - \mu_{k - 1} \}$ for all $k \geq 1$, it follows from \eqref{T:Convergence:7} that
		\begin{align}
			\theta_{k + 1} & = [\mu_{k + 1} - \mu_{k}]_{+} \nonumber \\
			& \leq \left[\lambda_{k}\left(\mu_{k} - \mu_{k - 1}\right) + 2\lambda_{k}\delta_{k} + \xi^{k + 1}\right]_{+} \nonumber \\
			& \leq \lambda_{k}\theta_{k} + 2\lambda_{k}[\delta_{k}]_{+} + \left[\xi^{k + 1}\right]_{+} \nonumber \\
			& = \lambda_{k}\left(\theta_{k} + 2\delta_{k}\right) + \left[\xi^{k + 1}\right]_{+}, \label{T:Convergence:8}
		\end{align}
		where the first inequality follows from the fact that $[x]_{+} \leq [y]_{+}$ when $x \leq y$, the second inequality follows from the fact that $[x + y]_{+} \leq [x]_{+} + [y]_{+}$, and the equality follows from the fact that $\lambda_{k} , \delta_{k} \geq 0$. Thus, applying \eqref{T:Convergence:8} successively yields, for all $k \geq 2$, that (we use the convention that $\prod_{j = k + 1}^{k} \lambda_{j} = 1$)
		\begin{align}
			\theta_{k + 1} & \leq \lambda_{k}\theta_{k} + 2\lambda_{k}\delta_{k} + \left[\xi^{k + 1}\right]_{+} \nonumber \\
			& \leq \lambda_{k}\left(\lambda_{k - 1}(\theta_{k - 1} + 2\delta_{k - 1}) + \left[\xi^{k}\right]_{+}\right) + 2\lambda_{k}\delta_{k} + \left[\xi^{k + 1}\right]_{+} \nonumber \\	
			& = \lambda_{k}\lambda_{k - 1}\theta_{k - 1} + 2\lambda_{k}\lambda_{k - 1}\delta_{k - 1} + 2\lambda_{k}\delta_{k} + \lambda_{k}\left[\xi^{k}\right]_{+} + \left[\xi^{k + 1}\right]_{+} \nonumber \\			
			& = \prod_{j = 2}^{k}\lambda_{j}\theta_{2} + 2\sum_{s = 2}^{k}\prod_{j = s}^{k} \lambda_{j}\delta_{s} + \sum_{s = 3}^{k + 1}\prod_{j = s}^{k} \lambda_{j}\left[\xi^{s}\right]_{+} \nonumber \\			
			& \leq 2\sum_{s = 2}^{k}\prod_{j = s}^{k} \lambda_{j}\delta_{s} + \sum_{s = 2}^{k + 1}\prod_{j = s}^{k} \lambda_{j}\left[\xi^{s}\right]_{+} \nonumber \\
			& = 2\sum_{s = 2}^{k} \pi_{s,k}\delta_{s} + \sum_{s = 2}^{k + 1} \pi_{s,k}\left[\xi^{s}\right]_{+},	 \label{T:Convergence:9}
		\end{align}
		where the last inequality follows from \eqref{T:Convergence:8} with $k = 1$ which implies that $\theta_{2} \leq \left[\xi^{2}\right]_{+}$ since $\lambda_{1} = 0$ (recall $\lambda_{1} = t_{1}^{-1}\left(t_{0} - 1\right) = 0$) and the last equality follows from the definition of $\pi_{s,k}$ (see \eqref{DefBeta}). Note that the inequality \eqref{T:Convergence:9} also holds true for $k = 1$ under the convention that $\sum_{s = 2}^{1} \pi_{s,k}\delta_{s} = 0$ (recall that $\pi_{2,1} = 1$). Therefore, summing \eqref{T:Convergence:9} from $k = 0$ to $\infty$, we have
		\begin{align}
			\sum_{k = 0}^{\infty} \theta_{k + 1} & \leq 2\sum_{k = 0}^{\infty}\sum_{s = 2}^{k} \pi_{s,k}\delta_{s} + \sum_{k = 0}^{\infty}\sum_{s = 2}^{k + 1} \pi_{s,k}\left[\xi^{s}\right]_{+} \nonumber \\
			& \leq 2\sum_{k = 0}^{\infty}\sum_{s = 2}^{\infty} \pi_{s,k}\delta_{s} + \sum_{k = 0}^{\infty}\sum_{s = 2}^{\infty} \pi_{s,k}\left[\xi^{s}\right]_{+} \nonumber \\
			& = 2\sum_{s = 2}^{\infty} \left(\sum_{k = 0}^{\infty} \pi_{s,k}\right)\delta_{s} + \sum_{s = 2}^{\infty} \left(\sum_{k = 0}^{\infty} \pi_{s,k}\right)\left[\xi^{s}\right]_{+} \nonumber \\
			& \leq 5a\sum_{s = 2}^{\infty} t_{s - 1}\delta_{s} + \frac{5a}{2}\sum_{s = 2}^{\infty} t_{s - 1}\left[\xi^{s}\right]_{+}, \label{T:Convergence:10}
		\end{align}
		where the last inequality follows from Lemma \ref{L:SumTechnical}. Next, we show that $\seq{\theta}{k}$ is summable. This goal will be achieved by showing that each of two terms on the right-hand side of \eqref{T:Convergence:9} is finite. From Lemma \ref{L:SumDelta} there exists $M_{1} > 0$ such that
		\begin{equation*}
    		\sum_{s = 2}^{\infty} t_{s -1}\delta_{s} \leq \sum_{s = 0}^{\infty} t_{s -1}\delta_{s} \leq M_{1},
		\end{equation*}
		proving the finiteness of the first term. Now, from the definition of $\xi^{k}$ and since $\nabla \sigma\left(\bx'\right) + \bv' \in \partial \omega\left(\bx'\right)$ (recall that $\omega \equiv \sigma + \psi$ and $\bv' \in \partial \psi\left(\bx'\right)$), we have that
		\begin{equation*}
			\xi^{k} = \frac{\alpha_{k - 1}}{\beta}\act{\nabla \sigma\left(\bx'\right) + \bv' , \bx' -  \bx^{k}} = \frac{\alpha_{k - 1}}{\beta}\act{\xi , \bx' -  \bx^{k}},
		\end{equation*}
		with $\xi \in \partial \omega\left(\bx'\right)$, and hence we obtain that
		\begin{equation} \label{T:Convergence:11}
    		\xi^{k} = \frac{\alpha_{k - 1}}{\beta}\act{\xi , \bx' -  \bx^{k}} \leq \frac{\rho\alpha_{k - 1}}{\beta\sqrt{\tau}}\sqrt{\varphi\left(\bx^{k}\right) - \varphi\left(\bx'\right)} \leq \frac{\rho\alpha_{k - 1}\sqrt{aC}}{\beta\sqrt{\tau}\left(k + 1\right)} = \frac{\rho\sqrt{aC}}{\beta\sqrt{\tau}\left(k + 1\right)^{1 + \gamma}},
		\end{equation}
		where the first inequality follows from \eqref{T:SimuHolderOmegaBound} combined with \eqref{T:SimulHolder:GradientBound}, the second inequality follows from Theorem \ref{T:SimulHolder}(i) and the last equality follows from the definition of $\alpha_{k} := \left(k + a\right)^{-\gamma}$ (see \eqref{Alphak}) and the fact that $\left(k + a - 1\right)^{-\gamma} < \left(k + 1\right)^{-\gamma}$ since $a > 2$. Therefore, from \eqref{T:Convergence:11}, we deduce that
		\begin{equation}
    		\sum_{s = 1}^{\infty} t_{s - 1}\left[\xi^{s}\right]_{+} \leq \frac{\rho\sqrt{aC}}{\beta\sqrt{\tau}}\sum_{s = 1}^{\infty} \frac{t_{s - 1}}{\left(s + 1\right)^{1 + \gamma}} \leq \frac{\rho\sqrt{aC}}{2\beta\sqrt{\tau}}\sum_{s = 1}^{\infty} \frac{1}{\left(s + 1\right)^{\gamma}} = \frac{\rho\sqrt{aC}}{2\beta\sqrt{\tau}}\sum_{s = 2}^{\infty} \frac{1}{s^{\gamma}} = \frac{\rho\sqrt{aC}}{2\beta\sqrt{\tau}} \cdot \frac{1}{\gamma - 1},
		\end{equation}
		where the second inequality follows from the fact that $t_{s - 1} = \left(s - 1\right)/a + 1 \leq \left(s - 1\right)/2 + 1 = \left(s + 1\right)/2$ (recall that $a > 2$) and the last inequality follows from Lemma \ref{L:TechSum} with $n_{1} = 2$, $n_{2} = \infty$ and $r = \gamma > 1$. This proves the finiteness of the second term in \eqref{T:Convergence:10} and therefore completes the proof that $\seq{\theta}{k}$ is summable.
		
		Now, we define $a_{k} = \mu_{k} - \sum_{s = 1}^{k} \theta_{s}$ and since $\mu_{k} \geq 0$ for all $k \in \nn$ and $\seq{\theta}{k}$ is summable, we see that $\seq{a}{k}$ is bounded from below. On the other hand, since $\theta_{k} = [\mu_{k} - \mu_{k - 1}]_{+}$, we get that
		\begin{equation*}
			a_{k + 1} = \mu_{k + 1} - \theta_{k + 1} - \sum_{s = 1}^{k} \theta_{s} \leq \mu_{k + 1} - \mu_{k + 1} + \mu_{k} - \sum_{s = 1}^{k} \theta_{s} = a_{k},
		\end{equation*}
		and hence $\seq{a}{k}$ is also a non-increasing sequence and thus is convergent. Since $\mu_{k} = a_{k} + \sum_{s = 1}^{k} \theta_{s}$ and both sequences converge, it follows that $\seq{\mu}{k}$ converges. To complete the proof we will prove that the limit is zero. Since $\seq{\mu}{k}$ converges it is also bounded, and therefore from the definition of $\mu_{k}$, $k \in \nn$, we also obtain that $\Seq{\bx}{k}$ is bounded. Hence, it has at least one converging sub-sequence, which we denote by $\left\{ \bx^{j_{k}} \right\}_{k \in \nn}$ and its limit point with $\tilde{\bx}$. From the fact that $\varphi$ is lower semi-continuous (see Assumption \ref{A:AssumptionA})) and using Theorem \ref{T:SimulHolder}(i), it follows that
		\begin{equation*}
    		\varphi\left(\tilde{\bx}\right) \leq \liminf_{k \rightarrow \infty} \varphi\left(\bx^{j_{k}}\right) = \varphi\left(\bx'\right).
		\end{equation*}
		Similarly, we obtain that $\omega\left(\tilde{\bx}\right) \leq \omega\left(\bx'\right)$. Since $\bx' \in X'$, these two facts yield that $\tilde{\bx} \in X'$. On the other hand, as we proved above, the sequence $\{\norm{\bx^{k} - \bx}\}_{k \in \nn}$ converges for all $\bx \in X'$ and in particular for $\bx = \tilde{\bx}$. Thus,
		\begin{equation*}
    		\lim_{k \rightarrow \infty} \norm{\bx^{k} - \tilde{\bx}} = \lim_{k \rightarrow \infty} \norm{\bx^{j_{k}} - \tilde{\bx}} = 0,
		\end{equation*}
		which concludes the proof.
	\end{proof}
	
\section{Numerical Results} \label{Sec:numeric}
	This section is devoted to evaluate FBi-PG numerically and compare its performance against existing algorithms that are proposed to solve convex bi-level optimization problems. For this purpose, we will follow the numerical experimental set-up of \cite{MS2023}. Here is the description of the numerical experiments. We conduct two experiments on real-world problems
	\begin{itemize}
		\item Songs release year prediction using linear regression
		\item Fake news classification using logistic regression.
	\end{itemize}
	In each experiment, we run FBi-PG with three parameters $\gamma = 1.3$, $\gamma = 1.5$ and $\gamma = 3$ (all with $a = 2$), which corresponds to three possible inner rates of convergence (see Theorem \ref{T:FastRateIn} and Proposition \ref{P:RateIn}), and compare it to the following algorithms
	\begin{itemize}
    	\item[$\rm{(i)}$] Bi-SG of \cite{MS2023} with $\alpha = 0.95$ and $c = 1$.
    	\item[$\rm{(ii)}$] FISTA \cite{BT2009} applied on the regularized problem \eqref{Prob:Tik} with a fixed regularization parameter $\alpha_{K} = 1/K$ when the number of iterations is $K = 10^{5}$.
	\end{itemize}
	It should be noted that due to the numerical results of \cite{MS2023}, which provide a numerical comparison between several existing algorithm for convex bi-level optimization problems, we present here only the Bi-SG \cite{MS2023}, which was shown to be superior in the experiments conducted there.
\medskip

	All experiments were run on 11th Gen Intel(R) Core(TM) i7-1195G7 @ 2.90GHz 2.92 GHz with a total RAM memory of 16GB and 4 physical cores (using a free account of Google colab notebook). We run each algorithm from the same starting point $\bx^{0} = \bo$ and provide details about both the inner and outer optimization problems. In terms of the inner optimization problem, for each method $1 \leq l \leq 5$, we record the following measure over iterations
	\begin{equation*}
		\Delta^{l}\varphi\left(k\right) := \varphi\left(\bx_{(l)}^{k}\right) - \varphi^{\ast},
	\end{equation*}	
	where $\bx_{(l)}^{k}$ is the point computed by the method $l$ after $k$ iterations, and $\varphi^{\ast}$ is the optimal value of $\varphi$, which we compute using an off-the-shelf solver\footnote{$https://scikit-learn.org/stable/modules/generated/sklearn.linear\_model.LogisticRegression$}.
\medskip
	
	As for the outer optimization problem, we present a plot that record the progress of the values of $\omega$ versus $-\log(\Delta^{l}\varphi\left(k\right))$, namely the number of accuracy digits, for an algorithm $l$ at iteration $k$. This plot aims at understanding the behaviour of the algorithm in solving both the inner and outer optimization problems. On the $x$-axis we plot the values of $-\log(\Delta^{l}\varphi\left(k\right))$, which is an indicator of how accurate the inner problem has been solved. On the $y$-axis we plot the values of the outer objective function $\omega$ as we get more accurate in solving the inner problem. Since the $x-$axis represents the negative of (the log of) the optimality gap of the inner-level problem, algorithms that were not able to achieve a good accuracy in solving the inner problem appear as a ``short" line in this plot. In between the algorithms that achieve a good accuracy in the inner optimality gap, we seek the algorithm that obtains the minimal value of $\omega$.

\subsection{Songs Release Year Prediction Using Linear Regression}	
	In this experiment, we used the YearPredictionMSD dataset\footnote{https://archive.ics.uci.edu/ml/datasets/yearpredictionmsd} to train a linear regression model to predict songs release years. We considered a sample of the data-set with $2000$ uniformly i.i.d randomly selected songs (without replacements). For each sample, we use $A$ to denote the feature matrix induced from it. Therefore, the $\left(i , j\right)$ entry of $A$ is the value of the $j$-th attribute of the $i$-th song in the sample. In addition, we use $b$ to denote the release year of the songs in the sample. That is, the $i$-th entry in $\bb$ is the release year of the $i$-th song in the sample. Our goal is to train a predictor by finding a weights vector $\bx$ that minimizes the following objective function
	\begin{equation*}
    	\varphi\left(\bx\right) = \frac{1}{2N}\norm{A\bx - \bb}^{2},
	\end{equation*}
	where $N$ is the number of songs in the sample. In this case, we consider the outer objective function $\omega \equiv \norm{\cdot}_{1}$. It should be noted that this inner objective function indeed satisfies the H\"olderian error bound property \cite{BNPS2015}.

	\begin{figure}
		\begin{subfigure}{.45\textwidth}
			\centering
			\includegraphics[width=1\linewidth]{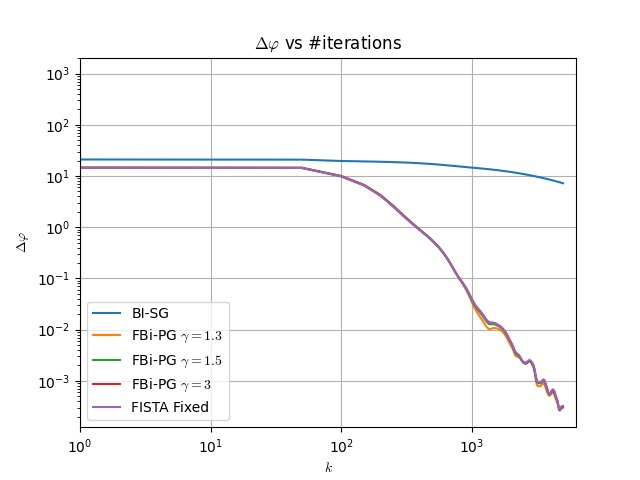}
			\caption{Value of the inner objective function vs. number of iterations}
		\end{subfigure}
		\hspace{0.1in}
		\begin{subfigure}{.45\textwidth}
			\centering
			\includegraphics[width=1\linewidth]{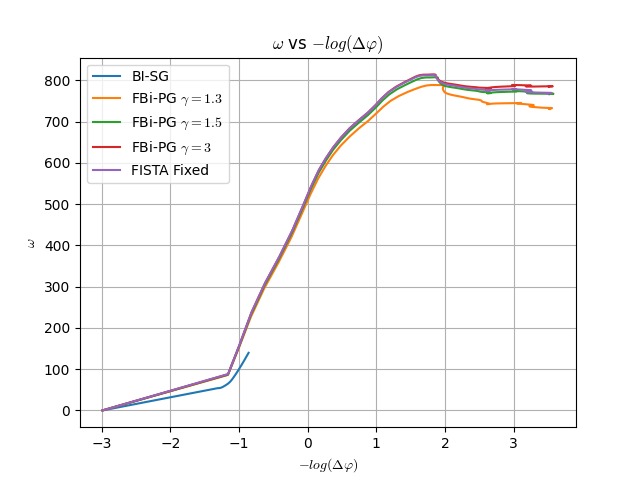}
			\caption{Number of accuracy digits of the inner problem vs. value of the outer objective function}
		\end{subfigure}
		\caption{The progress of the algorithms. In the right-hand side plot, the blue line is ``short" due to the low level of accuracy achieved in solving the inner problem as can be seen from the left-hand side plot.}
	\end{figure}
	
	In this experiment, we observe that the fixed regularization parameter we utilized performs comparably to the FBi-PG algorithm with all values of $\gamma$. However, as discussed in the paper, the major drawback of this approach is the inability of determining the ``right" regularization parameter in advance when solving a problem. This will be illustrated numerically in the forthcoming additional example.

\subsection{Fake News Classification Using Logistic Regression}
	Following the problem description in \cite{MS2023}, we immediately present the optimization model. We are given a matrix $A \in \real^{N \times m}$ and a vector $\bz \in \real^{N}$, which contains the labels of the articles. The corresponding logistic loss function is defined by				
	\begin{equation*}
		\varphi\left(\bx\right) \equiv \frac{1}{N}\sum_{i = 1}^{N}\left(\log\left(w\left(\bx^{T}\ba_{i}\right)\right)\bz_{i} + \log\left(1 - w\left(\bx^{T}\ba_{i}\right)\right)\left(1 - \bz_{i}\right) \right),
	\end{equation*}
	where $w : \real \rightarrow \real_{++}$ is the sigmoid function defined by $w\left(t\right) = \exp^{t}/\left(1 + \exp^{t}\right)$ and $\bz = \left(\bz_{1} , \bz_{2} , \ldots , \bz_{N}\right)^{T}$ for which $\bz_{i} \in \left\{0 , 1 \right\}$. We note that the logistic loss function satisfies the H\"olderian error bound property \cite{DS2023} and therefore our improved simultaneous rate and convergence guarantees proven in Theorems \ref{T:SimulHolder} and \ref{T:Convergence} are valid, respectively.
\medskip
		
	In this example, the bi-level approach can be used here to reduce the number of features in the vector $\bx$ obtained from minimizing the logistic loss $\varphi\left(\cdot\right)$. In other words, we wish to find the sparsest vector in between all the minimizers of the logistic regression function $\varphi\left(\cdot\right)$. Therefore, we take the following classical outer objective function
	\begin{equation*}	
    		\omega\left(\bx\right) = \norm{\bx}_{1}.
	\end{equation*}	
	In our experiment, we seek to differentiate between fake and genuine newspaper articles using a dataset from a Kaggle competition\footnote{Refer to $https://www.kaggle.com/competitions/fakenews/overeview$} comprising of 10,413 real and 10,387 fake news articles. Each article is accompanied by textual content and its authenticity status. We randomly uniformly select 500 fake and real newspapers, employing the pre-processing steps from \cite{MS2023}, and represent each article with 250 features as detailed in \cite{MS2023}.
	
	\begin{figure}
		\begin{subfigure}{.45\textwidth}
			\centering
			\includegraphics[width=1\linewidth]{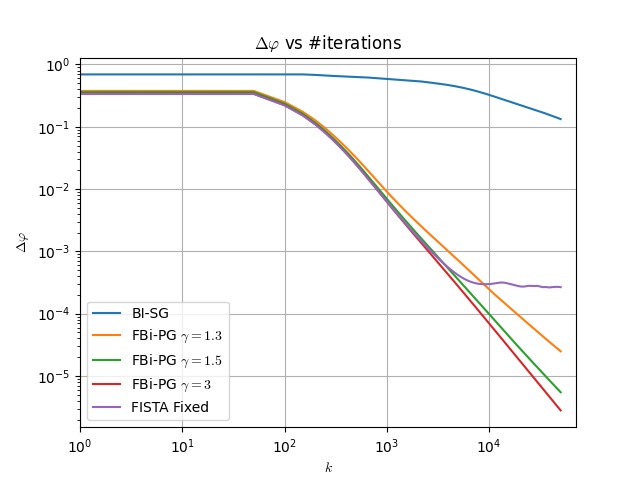}
			\caption{Value of the inner objective function vs. number of iterations}
		\end{subfigure}
		\hspace{0.1in}
		\begin{subfigure}{.45\textwidth}
			\centering
			\includegraphics[width=1\linewidth]{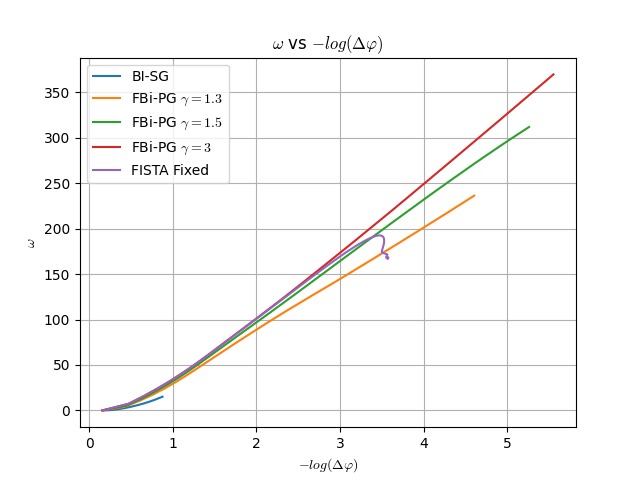}
			\caption{Number of accuracy digits of the inner problem vs. value of the outer objective function}
		\end{subfigure}
		\caption{The progress of the algorithms. In the left-hand side plot, the purple line, which is FISTA Fixed ``gets stuck" after $10^{4}$ iterations due to the fixed regularization parameter.}
	\end{figure}
	
	In this case, we see that the fixed regularization parameter that we choose doesn't solve the bi-level optimization problem since it ``gets stuck" when solving the inner problem. This means that for our choice of regularizing parameter, optimal solutions of the regularized problem don't coincide with optimal solutions of the bi-level optimization problem. However, we see in the left-hand side plot that all versions of FBi-PG solve the inner problem. Moreover, as our theory shows, the inner rate of FBi-PG is getting better as $\gamma$ increases.
\newpage

\section*{Appendix A: A Generic Rate for a Fixed Regularized Problem}
	We show a generic rate result  which is independent of the structure of the bi-level  problem and of any single-level optimization algorithm when using a fixed regularizing parameter. We describe this formally as follows.
	\begin{itemize}
		\item Let $F : \real^{n} \rightarrow \erl$ be a lower semicontinuous, proper and convex that is bounded from below and satisfies some other hypothesis, which we quantify by writing $F \in {\cal H}$.
 		\item Assume that a {\em fixed} regularizing parameter is known in advance for the regularized bi-level problem, namely,  the solution of the regularized problem coincides with the solution of the bi-level problem.
   		\item Any optimization algorithm ${\cal A}$ which generates iteratively a sequence $\Seq{\bx}{k}$ for minimizing $F\in {\cal H}$,  enjoys a proven convergence rate, in the sense that, for any $k \in \nn$, we have
			\begin{equation*}
				F\left(\bx^{k}\right) - F\left(\bx^{\ast}\right) \leq {\cal R}_{k},
			\end{equation*}
			where ${\cal R}_{k}$ is the rate of convergence and $\bx^{\ast}$ is a minimizer of $F$.
	\end{itemize}
	We will use this single-level algorithm to solve bi-level optimization problems. Let $\varphi$ and $\omega$ be the inner and outer objective functions of the bi-level problem, and for a predefined iteration index $K \in \nn$, consider the regularized objective  $F_{K} := \varphi + \alpha_{K}\omega$ where $\alpha_K>0$ is the fixed regularizing parameter known in advance, and such that $F_K \in {\cal H}$. Then, under this scenario, we have the following generic rate result.
	\begin{theorem}
		 Let $\bx^{K}$ be the output after running $K$ iterations of algorithm ${\cal A}$ on the function $F_{K}$. Then,
		 \begin{equation*}
		 	\varphi\left(\bx^{K}\right) - \varphi\left(\bx'\right) \leq {\cal R}_{K} + \alpha_{K}\left(\omega\left(\bx'\right) - \omega^{\ast}\right) \quad \text{and} \quad \omega\left(\bx^{K}\right) - \omega\left(\bx'\right) \leq \alpha_{K}^{-1}{\cal R}_{K},
		 \end{equation*}
		 where $\bx'$ is an optimal solution of the bi-level optimization problem and $\omega^{\ast} = \inf_{\bx \in \real^{n}} \omega\left(\bx\right)$.
	\end{theorem}
	\begin{proof}
Applying $K$ iterations of the algorithm ${\cal A}$ to minimize $F_{K}$ we obviously get that \begin{equation} \label{FKRate}
		F_{K}\left(\bx^{K}\right) - F_{K}\left(\bx_{K}^{\ast}\right) \leq {\cal R}_{K},
	\end{equation}
	where $\bx_{K}^{\ast}$ is a minimizer of $F_{K}$  which is also a minimizer of the bi-level problem since $\alpha_{K}$ is a
fixed regularization parameter choosen that way. Using the definition of $F_{K}$,  and since $\varphi\left(\bx^{K}\right) - \varphi\left(\bx'\right) \geq 0$, we immediately deduce from \eqref{FKRate} the following (recall that $\bx_{K}^{\ast} = \bx'$)
		\begin{equation*}
			\alpha_{K}\left(\omega\left(\bx^{K}\right) - \omega\left(\bx'\right)\right) = \varphi\left(\bx^{K}\right) - \varphi\left(\bx'\right) + \alpha_{K}\left(\omega\left(\bx^{K}\right) - \omega\left(\bx'\right)\right) \leq F_{K}\left(\bx^{K}\right) - F_{K}\left(\bx_{K}^{\ast}\right) \leq {\cal R}_{K},
		\end{equation*}
		which  proves the rate of convergence for the outer problem. For the inner problem, using again \eqref{FKRate} we get,		\begin{equation*}
			\varphi\left(\bx^{K}\right) - \varphi\left(\bx'\right) = F_{K}\left(\bx^{K}\right) - F_{K}\left(\bx'\right) + \alpha_{K}\left(\omega\left(\bx'\right) - \omega\left(\bx^{K}\right)\right) \leq {\cal R}_{K} + \alpha_{K}\left(\omega\left(\bx'\right) - \omega^{\ast}\right),
		\end{equation*}
		which completed the proof.
	\end{proof}

\section*{Appendix B: Proof of Lemma \ref{L:SumDelta}} \label{A:Tech}
	We begin with two technical lemmas that will be useful in proving the required result. Before doing so, we recall the definitions of the sequences $\seq{d}{k}$ and $\seq{\eta}{k}$, which will be used below, and are respectively defined by
	\begin{equation*}
		d_{k} := t_{k - 1}^{2} - \left(t_{k}^{2} - t_{k}\right) \quad \text{and} \quad \eta_{k} := t_{k - 1}^{2}\alpha_{k - 1} - \left(t_{k}^{2} - t_{k}\right)\alpha_{k}.
	\end{equation*}
	We begin with a result that will be used in the sequel.
    \begin{lemma} \label{L:SumOmega}
    	Let $\Seq{\bx}{k}$ be a sequence generated by FBi-PG with $1 < \gamma < 2$ and $a > 2$. Suppose that $\seq{b}{k} \subseteq \real_{+}$ is a sequence satisfying $b_{k} \leq b\eta_{k}$ for all $k \in \nn$ and some $b > 0$. Then, there exists $M > 0$ such that $\sum_{s = 0}^{k - 1} b_{s}\left(\omega\left(\bx'\right) - \omega\left(\bx^{s}\right)\right) \leq M$ for all $k \in \nn$ and $\bx' \in X'$.
    \end{lemma}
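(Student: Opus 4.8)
The plan is to bound each summand directly using the improved outer estimate of Theorem~\ref{T:SimulHolder}(ii) together with the decay of $\eta_s$ from Lemma~\ref{L:boundeta}, and then to recognize the resulting sum as a convergent $p$-series. First I would dispose of the boundary term: since $\eta_0 = t_{-1}^{2}\alpha_{-1} - (t_0^{2} - t_0)\alpha_0 = 0$ and $0 \leq b_0 \leq b\eta_0$, we have $b_0 = 0$, so the sum effectively runs over $s = 1, \ldots, k-1$. Moreover, because $\omega$ is constant on $X'$ (every point of $X'$ attains the minimum of $\omega$ over $X^{\ast}$), the quantity $\omega(\bx') - \omega(\bx^{s})$, and hence the entire sum, does not depend on which $\bx' \in X'$ is chosen. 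I would therefore fix once and for all a reference point $\bar{\bx}' \in \idom{\omega} \cap X'$, which is nonempty by Assumption~\ref{A:AssumpB}(ii), and let $C$ denote the (now fixed) constant from Theorem~\ref{T:SimulHolder} associated with $\bar{\bx}'$. This reduces the claimed uniformity over $\bx' \in X'$ to a single estimate.

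The core step is a term-by-term bound. For each $s \geq 1$ I would establish
\begin{equation*}
	b_s\left(\omega(\bx') - \omega(\bx^{s})\right) \leq b\eta_s \cdot \frac{C}{a^{2}(s+1)}
\end{equation*}
by splitting on the sign of $\omega(\bx') - \omega(\bx^{s})$: when it is nonnegative, combine $b_s \leq b\eta_s$ with the upper bound $\omega(\bx') - \omega(\bx^{s}) \leq C/(a^{2}(s+1))$ of Theorem~\ref{T:SimulHolder}(ii), both factors being nonnegative; when it is negative, the left-hand side is negative while the right-hand side is nonnegative, so the inequality is immediate. Feeding in Lemma~\ref{L:boundeta}, namely $\eta_s < \tfrac12(s+1)^{1-\gamma}$ (see \eqref{Etaproperty}), then yields the clean per-term estimate $b_s\left(\omega(\bx') - \omega(\bx^{s})\right) \leq \frac{bC}{2a^{2}}(s+1)^{-\gamma}$.

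It then remains only to sum. Using the index shift $\sum_{s=1}^{k-1}(s+1)^{-\gamma} = \sum_{j=2}^{k} j^{-\gamma}$ and Lemma~\ref{L:TechSum} with $n_1 = 2$, $n_2 = k$ and $r = \gamma > 1$, we obtain $\sum_{j=2}^{k} j^{-\gamma} \leq (1 - k^{1-\gamma})/(\gamma - 1) \leq 1/(\gamma-1)$, a bound uniform in $k$ precisely because $\gamma > 1$. Hence
\begin{equation*}
	\sum_{s = 0}^{k - 1} b_s\left(\omega(\bx') - \omega(\bx^{s})\right) \leq \frac{bC}{2a^{2}(\gamma-1)} =: M,
\end{equation*}
the desired constant. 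I do not anticipate a genuine obstacle: the result is essentially a direct consequence of the improved outer rate. The only points requiring care are the sign split that legitimizes multiplying the two inequalities (since $\omega(\bx') - \omega(\bx^{s})$ need not be of fixed sign) and the observation that constancy of $\omega$ on $X'$ makes the bound automatically uniform over $\bx' \in X'$, so that $M$ may be taken independent of both $k$ and $\bx'$.
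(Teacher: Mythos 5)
Your proposal is correct and follows essentially the same route as the paper's proof: both rest on Theorem~\ref{T:SimulHolder}(ii) applied to the nonnegative part of $\omega(\bx')-\omega(\bx^{s})$, the bound $\eta_{s} < \tfrac{1}{2}(s+1)^{1-\gamma}$ from Lemma~\ref{L:boundeta}, and Lemma~\ref{L:TechSum} to sum the resulting $p$-series with exponent $\gamma>1$. The only (harmless) differences are that you discard the $s=0$ term via $b_{0}=0$ where the paper simply absorbs it into the sum, and that you explicitly justify uniformity over $\bx'\in X'$ via constancy of $\omega$ on $X'$ — a point the paper leaves implicit.
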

    \begin{proof}
        Let $k \in \nn$. From Theorem \ref{T:SimulHolder}(ii), we have for all $k \in \nn$ that
        \begin{equation} \label{L:SumOmega:1}
            \sum_{s = 0}^{k - 1} b_{s}\left(\omega\left(\bx'\right) - \omega\left(\bx^{s}\right)\right) \leq \sum_{s = 0}^{k - 1} b_{s}\left[\omega\left(\bx'\right) - \omega\left(\bx^{s}\right)\right]_{+} \leq \frac{C}{a^{2}}\sum_{s = 0}^{k - 1} b_{s}\left(s + 1\right)^{-1},
        \end{equation}
        where $[x]_{+} \equiv \max\{0 , x\}$. Using \eqref{Etaproperty}, we have that $b_{s} \leq b\eta_{s} < b\left(s + 1\right)^{1 - \gamma}/2$ for all $s \in \nn$. Thus, we obtain that $b_{s}\left(s + 1\right)^{-1} \leq b\left(s + 1\right)^{-\gamma}/2$. Now, from Lemma \ref{L:TechSum} with $n_{1} = 2$, $n_{2} = k$ and $r = \gamma > 1$, we get that
        \begin{equation} \label{L:SumOmega:2}
            \sum_{s = 0}^{k - 1} \left(s + 1\right)^{-\gamma} = \sum_{s = 1}^{k} s^{-\gamma} = 1 + \sum_{s = 2}^{k} s^{-\gamma} \leq 1 + \frac{1^{1 - \gamma} - k^{1 - \gamma}}{\gamma - 1} \leq 1 + \frac{1}{\gamma - 1} = \frac{\gamma}{\gamma - 1}.
        \end{equation}
        Combining \eqref{L:SumOmega:1} with \eqref{L:SumOmega:2} yields the desired result.
    \end{proof}
    Before proving Lemma \ref{L:SumDelta} about the summability of the sequence $\left\{ t_{k - 1}\delta_{k - 1} \right\}_{k \in \nn}$, we prove the summability of the sequence $\left\{ d_{k}\left(\varphi\left(\bx^{k}\right) - \varphi\left(\bx'\right)\right) \right\}_{k \in \nn}$.
	\begin{lemma} \label{L:SumPhi}
    	Let $\Seq{\bx}{k}$ be a sequence generated by FBi-PG with $1 < \gamma < 2$ and $a > 2$. Then, there exists $M > 0$ such that $\sum_{s = 0}^{k - 1} d_{s}\left(\varphi\left(\bx^{s}\right) - \varphi\left(\bx'\right)\right) < M$ for all $k \in \nn$ and $\bx' \in X'$.
	\end{lemma}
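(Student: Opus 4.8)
The plan is to retain the full telescoped inequality from the proof of Proposition~\ref{P:SimIne} rather than the weakened version \eqref{C:SimIne:0}. A naive approach is doomed: since $t_{k} = (k+a)/a$, a direct computation gives $d_{k} = \left(k(a-2)+(a-1)^{2}\right)/a^{2} = O(k)$, while Theorem~\ref{T:SimulHolder}(i) only yields $\varphi(\bx^{s}) - \varphi(\bx') \leq aC/(s+1)^{2}$, so each summand $d_{s}(\varphi(\bx^{s}) - \varphi(\bx'))$ is of order $1/s$, which is not summable. The potential divergence must therefore be cancelled by keeping the remaining terms of the summed identity together.

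Concretely, I would start from inequality \eqref{P:SimIne:5} in the proof of Proposition~\ref{P:SimIne}, which, before the non-negative $d_{s}$-sum is discarded, rearranges into
\begin{equation*}
    \sum_{s = 0}^{k - 1} d_{s}\left(\varphi\left(\bx^{s}\right) - \varphi\left(\bx'\right)\right) \leq \frac{\beta}{2}\norm{\bx^{0} - \bx'}^{2} + \sum_{s = 0}^{k - 1} \eta_{s}\left(\omega\left(\bx'\right) - \omega\left(\bx^{s}\right)\right) - t_{k - 1}^{2}\left(F_{k - 1}\left(\bx^{k}\right) - F_{k - 1}\left(\bx'\right)\right).
\end{equation*}
The left-hand side is independent of the particular $\bx' \in X'$ chosen, since $\varphi$ is constant on $X' \subset X^{\ast}$; hence it suffices to bound the right-hand side for one convenient $\bx' \in \idom{\omega} \cap X'$ (non-empty by Assumption~\ref{A:AssumpB}(ii)), so that Theorem~\ref{T:SimulHolder} applies. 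I then bound the three terms on the right separately.

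The first term is a fixed constant. For the second term, I would invoke Lemma~\ref{L:SumOmega} with the choice $b_{s} = \eta_{s}$ (so that $b_{s} \leq b\,\eta_{s}$ holds with $b = 1$), which directly furnishes a uniform bound $M_{1}$ on $\sum_{s = 0}^{k - 1}\eta_{s}(\omega(\bx') - \omega(\bx^{s}))$. The third term is the crux: using $\varphi(\bx^{k}) - \varphi(\bx') \geq 0$ together with the outer estimate $\omega(\bx') - \omega(\bx^{k}) \leq C/(a^{2}(k+1))$ from Theorem~\ref{T:SimulHolder}(ii), I obtain $F_{k-1}(\bx^{k}) - F_{k-1}(\bx') = (\varphi(\bx^{k}) - \varphi(\bx')) + \alpha_{k-1}(\omega(\bx^{k}) - \omega(\bx')) \geq -\alpha_{k-1}C/(a^{2}(k+1))$, whence
\begin{equation*}
    -t_{k-1}^{2}\left(F_{k-1}\left(\bx^{k}\right) - F_{k-1}\left(\bx'\right)\right) \leq \frac{C\, t_{k-1}^{2}\alpha_{k-1}}{a^{2}(k+1)} = \frac{C\left(k+a-1\right)^{2-\gamma}}{a^{4}(k+1)}.
\end{equation*}

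Since $1 < \gamma < 2$ forces $2 - \gamma < 1$, this quantity is uniformly bounded in $k$ (indeed it tends to $0$), yielding a bound $M_{2}$. Collecting the three estimates establishes the claim with $M = \frac{\beta}{2}\norm{\bx^{0} - \bx'}^{2} + M_{1} + M_{2}$. The main obstacle, as noted above, is precisely recognizing that the $d_{s}$-sum cannot be controlled term by term and must instead be handled jointly with the leftover $F_{k-1}$ remainder, whose favourable sign — supplied by the H\"olderian outer rate of Theorem~\ref{T:SimulHolder}(ii) — is exactly what absorbs the otherwise divergent $\sum 1/s$ contribution.
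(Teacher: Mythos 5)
Your proposal is correct and follows essentially the same route as the paper's proof: both start from the un-weakened telescoped inequality \eqref{P:SimIne:5}, rearrange to isolate the $d_{s}$-sum, control the $\eta_{s}$-sum via Lemma~\ref{L:SumOmega} with $b_{s}=\eta_{s}$, and absorb the leftover $-t_{k-1}^{2}\left(F_{k-1}\left(\bx^{k}\right)-F_{k-1}\left(\bx'\right)\right)$ term using $\varphi\left(\bx^{k}\right)-\varphi\left(\bx'\right)\geq 0$ together with the outer bound of Theorem~\ref{T:SimulHolder}(ii) and the fact that $\left(k+a-1\right)^{2-\gamma}/\left(k+1\right)$ is uniformly bounded for $\gamma>1$. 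Your added observations (why a term-by-term bound fails, and the independence of the left-hand side from the choice of $\bx'\in X'$) are accurate but do not change the argument.
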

	\begin{proof}
    	Let $k \in \nn$ and $\bx' \in X'$. From the proof of Proposition \ref{P:SimIne} (\cf \eqref{P:SimIne:5}), we have that
   		\begin{equation*}
    		t_{k - 1}^{2}\left(F_{k - 1}\left(\bx^{k}\right) - F_{k - 1}\left(\bx'\right)\right) \leq \frac{\beta}{2}\norm{\bx^{0} - \bx'}^{2} - \sum_{s = 0}^{k - 1} d_{s}\left(\varphi\left(\bx^{s}\right) - \varphi\left(\bx'\right)\right) - \sum_{s = 0}^{k - 1} \eta_{s}\left(\omega\left(\bx^{s}\right) - \omega\left(\bx'\right)\right).
    	\end{equation*}
     	Rearranging this inequality yields that
     	\begin{equation} \label{L:SumPhi:1}
			t_{k - 1}^{2}\left(F_{k - 1}\left(\bx^{k}\right) - F_{k - 1}\left(\bx'\right)\right) + \sum_{s = 0}^{k - 1} d_{s}\left(\varphi\left(\bx^{s}\right) - \varphi\left(\bx'\right)\right) \leq \frac{\beta}{2}\norm{\bx^{0} - \bx'}^{2} + \sum_{s = 0}^{k - 1}\eta_{s}\left(\omega\left(\bx'\right) - \omega\left(\bx^{s}\right)\right).
     	\end{equation}
    	Since $\bx' \in X'$, it follows that $\varphi\left(\bx^{k}\right) - \varphi\left(\bx'\right) \geq 0$. Thus, combining with the fact that $F_{k - 1} \equiv \varphi + \alpha_{k - 1}\omega$, it follows from Theorem \ref{T:SimulHolder}(ii) that
    	\begin{equation*}
        	t_{k - 1}^{2}\left(F_{k - 1}\left(\bx^{k}\right) - F_{k - 1}\left(\bx'\right)\right) \geq \alpha_{k - 1}t_{k - 1}^{2}\left(\omega\left(\bx^{k}\right) - \omega\left(\bx'\right)\right) \geq -\alpha_{k - 1}t_{k - 1}^{2}\frac{C}{a^{2}\left(k + 1\right)}.
        \end{equation*}
        Using now the definitions $\alpha_{k}$ and $t_{k}$ (see \eqref{Bi-AGUpdate:0} and \eqref{D:Tk}, respectively) yields that
    	\begin{equation*}
        	t_{k - 1}^{2}\left(F_{k - 1}\left(\bx^{k}\right) - F_{k - 1}\left(\bx'\right)\right) \geq -\frac{C\left(k + a - 1\right)^{2 - \gamma}}{a^{4}\left(k + 1\right)} \geq -\frac{C\left(k + a - 1\right)}{a^{4}\left(k + 1\right)} \geq -\frac{C\left(ak + a\right)}{a^{4}\left(k + 1\right)} = -\frac{C}{a^{2}},
    	\end{equation*}
    	where the second inequality from the fact that $\left(k + a - 1\right)^{2 - \gamma} \leq k + a - 1$ since $2 - \gamma < 1$ (recall that $\gamma > 1$) and  $a - 1 > 0$ (recall that $a > 2$), and the last inequality follows from the fact that $k + a - 1 < k + a < ak + a$ since $a > 1$. By plugging it into \eqref{L:SumPhi:1} and adding $C/a^{3}$ to both sides, we get that
    	\begin{equation} \label{L:SumPhi:2}
			\sum_{s = 0}^{k - 1} d_{s}\left(\varphi\left(\bx^{s}\right) - \varphi\left(\bx'\right)\right) \leq \frac{C}{a^{3}} + \frac{\beta}{2}\norm{\bx^{0} - \bx'}^{2} + \sum_{s = 0}^{k - 1} \eta_{s}\left(\omega\left(\bx'\right) - \omega\left(\bx^{s}\right)\right).
    	\end{equation}
    	Applying Lemma \ref{L:SumOmega} with $b_{k} \equiv \eta_{k}$ and $b = 1$ it follows that there exists ${\tilde M} > 0$ such that $\sum_{s = 0}^{k - 1} \eta_{s}\left(\omega\left(\bx'\right) - \omega\left(\bx^{s}\right)\right) \leq {\tilde M}$. Hence, using this bound in  \eqref{L:SumPhi:2} yields the desired result.
    \end{proof}
	Now, we are ready to prove Lemma \ref{L:SumDelta}.
\medskip

    \noindent {\em Proof of Lemma \ref{L:SumDelta}.}
   		Let $k \in \nn$. From Proposition \ref{P:BasicIne} with $\bx = \bx^{k}$, we obtain, 
   \begin{equation} \label{L:SumDelta:1}
            \frac{\beta}{2}\norm{\bz^{k + 1} - \bx^{k}}^{2} - \frac{\beta}{2}\norm{\bz^{k} - \bx^{k}}^{2} \leq t_{k}^{2}\left(F_{k}\left(\bx^{k}\right) - F_{k}\left(\bx^{k + 1}\right)\right).
        \end{equation}
        From \eqref{Bi-AGUpdate:1} and the definition of the sequence $\bz^{k}$ we have that $\bz^{k + 1} - \bx^{k} = t_{k}\left(\bx^{k + 1} - \bx^{k}\right)$ and $\bz^{k} - \bx^{k} = \left(t_{k - 1} - 1\right)\left(\bx^{k} - \bx^{k - 1}\right)$. Therefore, using the definition of $\delta_{k}$ (see \eqref{DefDeltaK}), we obtain that
        \begin{align*}
			\frac{\beta}{2} \norm{\bz^{k + 1} - \bx^{k}}^{2} - \frac{\beta}{2} \norm{\bz^{k} - \bx^{k}}^{2} & = t_{k}^{2} \frac{\beta}{2}\norm{\bx^{k+1} - \bx^{k}}^{2} - (t_{k-1} - 1)^{2} \frac{\beta}{2}\norm{\bx^{k} - \bx^{k-1}}^{2} \nonumber \\
			& = \beta\left(t_{k}^{2}\delta_{k} - \left(t_{k - 1} - 1\right)^{2}\delta_{k - 1}\right) \nonumber \\
			& = \beta\left(t_{k}^{2}\delta_{k} - t_{k - 1}^{2}\delta_{k - 1}\right) + \beta\left(2t_{k - 1} - 1\right)\delta_{k - 1} \nonumber \\
			& \geq \beta\left(t_{k}^{2}\delta_{k} - t_{k - 1}^{2}\delta_{k - 1}\right) + \beta t_{k - 1}\delta_{k - 1},
        \end{align*}
        where the last inequality follows from the facts that $t_{k} \geq 1$ and $\delta_{k} \geq 0$ for all $k \in \nn$. Plugging this into \eqref{L:SumDelta:1} with $k = s$ yields that
        \begin{equation} \label{L:SumDelta:2}
            \beta\left(t_{s}^{2}\delta_{s} - t_{s - 1}^{2}\delta_{s - 1}\right) + \beta t_{s - 1}\delta_{s - 1} \leq t_{s}^{2}\left(F_{s}\left(\bx^{s}\right) - F_{k}\left(\bx^{s + 1}\right)\right).
        \end{equation}
        Summing \eqref{L:SumDelta:2} for $s = 0 , 1 , \ldots , k - 1$, we get that
        \begin{equation*}
            \beta\sum_{s = 0}^{k - 1} t_{s - 1}\delta_{s - 1} + \beta\left(t_{k - 1}^{2}\delta_{k - 1} - t_{-1}^{2}\delta_{-1}\right) = \beta\sum_{s = 0}^{k - 1} \left(t_{s}^{2}\delta_{s} - t_{s - 1}^{2}\delta_{s - 1} + t_{s - 1}\delta_{s - 1}\right) \leq \sum_{s = 0}^{k - 1} t_{s}^{2}\left(F_{s}\left(\bx^{s}\right) - F_{s}\left(\bx^{s + 1}\right)\right).
	   	\end{equation*}
	   	Since $t_{-1} = 0$ and $t_{k - 1}^{2}\delta_{k - 1} \geq 0$ we obtain that
        \begin{equation*}
            \beta\sum_{s = 0}^{k - 1} t_{s - 1}\delta_{s - 1} \leq \sum_{s = 0}^{k - 1} t_{s}^{2}\left(F_{s}\left(\bx^{s}\right) - F_{s}\left(\bx^{s + 1}\right)\right).
	   	\end{equation*}
	   	To complete the proof we will prove that the sum on the right-hand side is bounded by some $M > 0$ for any $k \in \nn$. By defining $\Phi_{k} = \varphi\left(\bx^{k}\right) - \varphi\left(\bx'\right)$ and $\Omega_{k} = \omega\left(\bx'\right) - \omega\left(\bx^{k}\right)$ for all $k \in \nn$, we obtain from the definition of $F_{k} \equiv \varphi + \alpha_{k}\omega$ that
        \begin{equation*}
            F_{k}\left(\bx^{k}\right) - F_{k}\left(\bx^{k + 1}\right) = F_{k}\left(\bx^{k}\right) - F_{k}\left(\bx'\right) + F_{k}\left(\bx'\right) - F_{k}\left(\bx^{k + 1}\right) = \Phi_{k} - \Phi_{k + 1} - \alpha_{k}\left(\Omega_{k} - \Omega_{k + 1}\right),
		\end{equation*}
		and hence
		\begin{align}
            \sum_{s = 0}^{k - 1} t_{s}^{2}\left(F_{s}\left(\bx^{s}\right) - F_{s}\left(\bx^{s + 1}\right)\right) & = \sum_{s = 0}^{k - 1} t_{s}^{2}\left(\Phi_{s} - \Phi_{s + 1} - \alpha_{s}\left(\Omega_{s} - \Omega_{s + 1}\right)\right) \nonumber \\
            & = t_{0}^{2}\Phi_{0} - t_{k - 1}^{2}\Phi_{k} + \sum_{s = 1}^{k - 1} \left(t_{s}^{2} - t_{s - 1}^{2}\right)\Phi_{s} - \alpha_{0}t_{0}^{2}\Omega_{0} + \alpha_{k - 1}t_{k - 1}^{2}\Omega_{k} \nonumber \\
            & - \sum_{s = 1}^{k - 1} \left(\alpha_{s}t_{s}^{2} - \alpha_{s - 1}t_{s - 1}^{2}\right)\Omega_{s} \nonumber \\
            & \leq t_{0}^{2}\Phi_{0} - \alpha_{0}t_{0}^{2}\Omega_{0} + \frac{a}{a - 2}\sum_{s = 1}^{k - 1} d_{s}\Phi_{s} + \alpha_{k - 1}t_{k - 1}^{2}\Omega_{k} + \sum_{s = 1}^{k - 1} \left(\alpha_{s - 1}t_{s - 1}^{2} - \alpha_{s}t_{s}^{2}\right)\Omega_{s}, \nonumber
        \end{align}
        where the last inequality follows from the fact that $-t_{k - 1}^{2}\Phi_{k} \leq 0$ (since $\Phi_{k} \geq 0$) and that
        \begin{equation} \label{L:SumDelta:21}
        	t_{s}^{2} - t_{s - 1}^{2} \leq t_{s} = \frac{s + a}{a} = \frac{a^{2}d_{s} - 1}{a\left(a - 2\right)} \leq \frac{ad_{s}}{a - 2},
        \end{equation}
        where the second equality follows from the definition of $d_{k}$ since
         \begin{equation*}
            d_{s} = t_{s - 1}^{2} - \left(t_{s}^{2} - t_{s}\right) = a^{-2}\left(\left(s + a - 1\right)^{2} - \left(s + a\right)s\right) = a^{-2}\left(\left(s + a\right)\left(a - 2\right) + 1\right).
        \end{equation*}
        Now, we will show that there exists $M > 0$ such that for all $k \in \nn$ we have that
        \begin{equation} \label{L:SumDelta:3}
        	\frac{a}{a - 2}\sum_{s = 1}^{k - 1} d_{s}\Phi_{s} + \alpha_{k - 1}t_{k - 1}^{2}\Omega_{k} + \sum_{s = 1}^{k - 1} \left(\alpha_{s - 1}t_{s - 1}^{2} - \alpha_{s}t_{s}^{2}\right)\Omega_{s} \leq M.
        \end{equation}
        We will prove that each of three terms above is bounded from above. First, from Lemma \ref{L:SumPhi} it follows that there exists $M_{1} > 0$ such that $\sum_{s = 0}^{k - 1} d_{s}\Phi_{s} \leq M_{1}$. Second, using Theorem \ref{T:SimulHolder}(ii) and the definition of $\Omega_{k}$ we obtain that
        \begin{equation} \label{L:SumDelta:BoundXi}
            \alpha_{k - 1}t_{k - 1}^{2}\Omega_{k} = \alpha_{k - 1}t_{k - 1}^{2}\left(\omega\left(\bx'\right) - \omega\left(\bx^{k}\right)\right) \leq \frac{\alpha_{k - 1}t_{k - 1}^{2}C}{a^{2}\left(k + 1\right)} = \frac{\left(k + a - 1\right)^{2 - \gamma}C}{a^{4}\left(k + 1\right)} \leq \frac{\left(k + a\right)C}{a^{4}\left(k + 1\right)} \leq \frac{C}{a^{3}},
        \end{equation}
        where the second equality from the definitions of $\alpha_{k}$ and $t_{k}$ (see \eqref{Bi-AGUpdate:0} and \eqref{D:Tk}, respectively), the second inequality follows from the fact that $2 - \gamma < 1$ since $\gamma > 1$, and the last inequality from the fact that $k + a < ak + a = a\left(k + 1\right)$ since $a > 2$. For the third term in \eqref{L:SumDelta:3}, we first split it as follows
        \begin{equation} \label{L:SumDelta:4}
        	\sum_{s = 1}^{k - 1} \left(\alpha_{s - 1}t_{s - 1}^{2} - \alpha_{s}t_{s}^{2} + \frac{a}{a - 2}\eta_{s}\right)\Omega_{s} - \frac{a}{a - 2}\sum_{s = 1}^{k - 1} \eta_{s}\Omega_{s}.
        \end{equation}
        Moreover, from \eqref{L:SumDelta:21} and the definitions of $d_{s}$ and $\eta_{s}$, we obtain that
        \begin{equation*}
        	\alpha_{s}t_{s} \leq \frac{a}{a - 2} \cdot \alpha_{s}d_{s} = \frac{a}{a - 2}\left(\alpha_{s}t_{s - 1}^{2} - \alpha_{s}\left(t_{s}^{2} - t_{s}\right)\right) \leq \frac{a}{a - 2}\left(\alpha_{s - 1}t_{s - 1}^{2} - \alpha_{s}\left(t_{s}^{2} - t_{s}\right)\right) = \frac{a}{a - 2}\eta_{s},
        \end{equation*}
        where the last inequality follows from the fact that $\alpha_{s} < \alpha_{s - 1}$. Using it and the definition of $\eta_{s}$ we obtain that
        \begin{equation*}
            \alpha_{s - 1}t_{s - 1}^{2} - \alpha_{s}t_{s}^{2} + \frac{a}{a - 2}\eta_{s} \geq \alpha_{s - 1}t_{s - 1}^{2} - \alpha_{s}t_{s}^{2} + \alpha_{s}t_{s} = \eta_{s} > 0,
        \end{equation*}
        and
        \begin{equation*}
            \alpha_{s - 1}t_{s - 1}^{2} - \alpha_{s}t_{s}^{2} + \frac{a}{a - 2}\eta_{s} = \eta_{s} - \alpha_{s}t_{s} + \frac{a}{a - 2}\eta_{s} \leq  \eta_{s} + \frac{a}{a - 2}\eta_{s} = \frac{2a - 2}{a - 2}\eta_{s},
        \end{equation*}
        where the inequality follows from the fact that $\alpha_{s}t_{s} \geq 0$. Therefore, by applying Lemma \ref{L:SumOmega} with $b_{k} \equiv \alpha_{k - 1}t_{k - 1}^{2} - \alpha_{k}t_{k}^{2} + a\eta_{k}/\left(a - 2\right) < c\eta_{k}$ with $b = \left(2a - 2\right)/\left(a - 2\right)$ we obtain that there exists $M_{3} > 0$ such that (recall that $t_{-1} = \eta_{0} = 0$ and $t_{0} = 1$)
        \begin{equation*}
        	\sum_{s = 1}^{k - 1} \left(\alpha_{s - 1}t_{s - 1}^{2} - \alpha_{s}t_{s}^{2} + \frac{a}{a - 2}\eta_{s}\right)\Omega_{s} = \sum_{s = 0}^{k - 1} \left(\alpha_{s - 1}t_{s - 1}^{2} - \alpha_{s}t_{s}^{2} + \frac{a}{a - 2}\eta_{s}\right)\Omega_{s} + \alpha_{0} \leq M_{3} + \alpha_{0}.
        \end{equation*}
        To complete the proof, we need to also bound the second term in \eqref{L:SumDelta:4}. Indeed, from Proposition \ref{P:SimIne} after rearranging and using the definitions of $\Omega_{k}$ and $F_{k - 1}$ we have (recall that $\eta_{0} = 0$)
        \begin{align*}
        	-\sum_{s = 1}^{k - 1} \eta_{s}\Omega_{s} & = -\sum_{s = 0}^{k - 1} \eta_{s}\Omega_{s} \leq t_{k - 1}^{2}\left(F_{k - 1}\left(\bx'\right) - F_{k - 1}\left(\bx^{k}\right)\right) + \frac{\beta}{2}\norm{\bx^{0} - \bx'}^{2} \\
        	& = t_{k - 1}^{2}\left(\varphi\left(\bx'\right) - \varphi\left(\bx^{k}\right) + \alpha_{k -1}\left(\omega\left(\bx'\right) - \omega\left(\bx^{k}\right)\right)\right) + \frac{\beta}{2}\norm{\bx^{0} - \bx'}^{2} \\
        	& \leq t_{k - 1}^{2}\alpha_{k -1}\Omega_{k} + \frac{\beta}{2}\norm{\bx^{0} - \bx'}^{2} \leq \frac{C}{a^{3}} + \frac{\beta}{2}\norm{\bx^{0} - \bx'}^{2},
        \end{align*}
        where the second inequality follows from the fact that $\varphi\left(\bx'\right) \leq \varphi\left(\bx^{k}\right)$ and last inequality follows from \eqref{L:SumDelta:BoundXi}. This completes the proof of the boundedness of \eqref{L:SumDelta:3}.

\bibliographystyle{plain}
\bibliography{notes}
	
\end{document}